\newtheorem{mainthm}{Theorem} % our two main results
\newtheorem*{verjov}{The Verjovsky Conjecture}
\newtheorem*{thmm}{Theorem}   % cited theorems 
\newtheorem{thm}{Theorem}
\newtheorem{cor}{Corollary}
\newtheorem{claim}{Claim}
\newtheorem{lem}[thm]{Lemma}
\newtheorem{definition}[thm]{Definition}
\newtheorem{prop}[thm]{Proposition}
\newtheorem{rem}[thm]{Remark}
\newcommand{\jac}{\operatorname{Jac}} % Jacobian
\newcommand{\flow}[2]{{\phi^{#2}_{#1}}} %  flow
\newcommand{\dm}[1]{D#1} % Derivative map (of flow)
\newcommand{\pb}[1]{\left(\smash{#1}\right)^{*}} % pull back (on 1-forms)
\newcommand{\cM}{{M}} % Manifold
\newcommand{\cC}{\mathcal{C}} % Differentiable
\newcommand{\bE}{\mathbb{E}} % Bundle subspace
\newcommand{\cL}{\mathcal{L}} % Lie derivative
\newcommand{\bN}{\mathbb{N}} % Natural numbers
\newcommand{\norm}[1]{\left\|#1\right\|} % Norm
\newcommand{\bR}{\mathbb{R}} % Reals
\newcommand{\extd}{d} % Exterior derivative
\begin{document}

%\date{\today. {\itshape } }
\title{Proof of the Verjovsky Conjecture}
\author{Khadim War}
\date{}

\affil{Instituto Nacional de Matematica Pura e Aplicada (IMPA)}
\affil{khadim@impa.br}

\maketitle
\begin{abstract}
In this paper we present a proof of the Verjovsky conjecture: Every codimension-one Anosov flow on a manifold of dimension greater than three is topologically equivalent to the suspension of a hyperbolic toral automorphism. 
In fact, the conjecture is derived from possible more general result that says 
 that for every \(\cC^{4}\) codimension-one volume-preserving Anosov flow on a manifold of dimension greater than three,  a suitable time change guarantees that the stable and unstable sub-bundles are then jointly integrable.
\end{abstract}

\footnote{With pleasure, we thank  Andrei Agrachev, Thierry Barbot and Stefano Luzzatto for listening the first time to the proof. We thank Keith Burns, Boris Hasselblatt, Rafael Potrie, Mark Pollicott, Raul Ures, Marcelo Viana and  Amie Wilkinson for encounraging comments. We also thank Federico Rodriguez-Hertz for bringing my attention for the first time on this Conjecture. We thank Oliver Butterley with who we discussed many techniques used here.}

\tableofcontents

\section{Introduction and results}

Anosov flows are central examples of chaotic dynamical systems.
Let \(M\) be a compact connected Riemannian manifold. A flow \(F:=\{f^{t}, t\in\mathbb{R}\}: M\to M\) is called
\emph{Anosov} if it is \emph{uniformly hyperbolic} in the sense the tangent bundle splits into three invariant sub-bundles \(\bE^{c}\), \(\bE^s\) and \(\bE^u\), where $\bE^{c}$ is one-dimensional and contains the flow direction and where the vectors in \(\bE^s\) (resp.\@  \(\bE^u\)) are exponentially contracted (resp.\@ expanded).
The sub-bundles \(\bE^s\) and \(\bE^u\) are referred to as stable and unstable bundles.
The three main examples of Anosov flows are: (1) Suspensions over Anosov diffeomorphisms; (2) Geodesic flows on manifolds of negative curvature; (3) Small perturbations of such geodesic flows (Anosov are structurally stable but typically the perturbed flow will fail to be a geodesic flow).

Two flows are said to be \emph{topologically equivalent} if there exists a homeomorphism which maps orbits to orbits homeomorphically and preserves orientation of the orbits.
It is natural to ask for a classification of Anosov flows up to topological equivalence. 
An Anosov flow is said to be \emph{codimension-one} if the stable or unstable bundle is one-dimensional. A flow admits a \text{global cross section} if there exists a closed co-dimension one manifold which intersects every orbit transversally and consequently the manifold is topologically a suspension maniflod. The geodesic flow on a negatively curved compact surface is an example of a three-dimensional Anosov flow with no global cross-section (if it had a global cross-section it would contradict a result of Plante~\cite[Theorem 4.8]{Pla72}). Consequently it cannot be topologically equivalent to the suspension of an Anosov diffeomorphism. However geodesic flows are never codimension-one in higher dimensions.
In the 1970s Verjovsky made a conjecture concerning this question in higher dimensions.

\begin{verjov} 
Any codimension-one Anosov flow on a closed manifold of dimension greater than three is topologically equivalent to the suspension flow of a hyperbolic toral automorphism.
\end{verjov}
If true this conjecture would give a complete classification of codimension-one Anosov flows, up to topological equivalence, in higher dimensions.
Plante~\cite{Pla81} proved the Verjovsky Conjecture when the underlying manifold has solvable fundamental group. 
Ghys~\cite{Ghy89} proved this conjecture when\footnote{For convenience \( \bE^{su} = \bE^{s}\oplus \bE^{u}\) denotes the stable-unstable sub-bundle,  \( \bE^{cs} = \bE^{c}\oplus \bE^{s}\) denotes the centre-stable sub-bundle and similarly \( \bE^{cu} = \bE^{c}\oplus \bE^{u}\) denotes the centre-unstable sub-bundle.} $\bE^{su}$ is \(\cC^1\) or when the codimension-one sub-bundle is $\cC^{2}$ and the flow is volume preserving (i.e., $\dim \bE^{u}=1$ and $\bE^{cs}$ is $\cC^{2}$ or with stable and unstable swapped).  
These results were improved by Simi\'c~\cite{Sim96} who proved the conjecture under the additional assumption that \(\bE^{su}\) is Lipschitz and then~\cite{Sim97}  for 
the case where the codimension-one sub-bundle is $\cC^{1+\alpha}$ for all \(\alpha<1\).
The complete resolution of the conjecture was announced by Asaoka~\cite{Asa08} but a gap was found in a result of Simi\'c~\cite{Sim05} that had been used 
 in the proof \cite[Erratum]{Asa08}. In this article, using many of the ideas developed by Simić, we complete this work.

\begin{mainthm}
\label{thm:istrue}
 The Verjovsky Conjecture is true.
\end{mainthm}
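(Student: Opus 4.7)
The plan is to derive Theorem~A from the main technical result announced in the abstract: for every \(\cC^{4}\) codimension-one volume-preserving Anosov flow on a manifold of dimension greater than three, a suitable time change makes \(\bE^{s}\) and \(\bE^{u}\) jointly integrable. The first step is to reduce an arbitrary codimension-one Anosov flow to a \(\cC^{4}\) volume-preserving representative within its topological equivalence class. Since topological equivalence is insensitive to smooth time changes, this should follow from a preliminary time change solving a Livsic cohomological equation for the unstable Jacobian (which is a single scalar function because, after swapping stable and unstable if necessary, one of the strong sub-bundles is one-dimensional). Any further time change introduced later does not disturb the statement to be proved.

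Once \(\bE^{s}\) and \(\bE^{u}\) are jointly integrable, the sub-bundle \(\bE^{su}=\bE^{s}\oplus\bE^{u}\) integrates to a codimension-one foliation \(\mathcal{F}\) transverse to the flow. I would then use \(\dim\cM>3\) together with the one-dimensional strong foliation to show that every leaf of \(\mathcal{F}\) is compact and meets every orbit, so Plante's arguments in~\cite{Pla72,Pla81} promote such a leaf to a global cross-section. The first-return map is then a \(\cC^{4}\) codimension-one Anosov diffeomorphism of a closed manifold \(N\) of dimension \(\dim\cM-1\geq 3\), and by the Franks--Newhouse classification of codimension-one Anosov diffeomorphisms it is topologically conjugate to a hyperbolic automorphism of \(N\cong \mathbb{T}^{\dim\cM-1}\). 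The original flow is therefore topologically equivalent to the asserted suspension, establishing Theorem~A.

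The hard part is the technical theorem on joint integrability itself. Following~\cite{Sim96,Sim97}, one would encode the non-integrability defect of the pair \((\bE^{s},\bE^{u})\) as a scalar cocycle taking values in the quotient \(T\cM/\bE^{su}\), which is one-dimensional along the flow direction. Volume preservation together with \(\cC^{4}\) regularity should yield enough transverse regularity of the weak stable and weak unstable foliations to place this defect in a Livsic-type cohomological framework, after which a suitable time change absorbs the coboundary part. The most delicate point, and precisely where the earlier approach of~\cite{Asa08,Sim05} stumbled, is the transverse regularity of the holonomies of the weak foliations: one needs to upgrade the standard Pesin-theoretic bounds sufficiently to make the cohomological equation solvable. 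I expect both the dimension hypothesis \(\dim\cM>3\) and the \(\cC^{4}\) smoothness assumption to be used in an essential way at this step, and it is here that I anticipate spending the bulk of the technical work.
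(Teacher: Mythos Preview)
Your reduction step to a \(\cC^{4}\) volume-preserving model contains a genuine gap. A time change solving a Livsic equation for the one-dimensional unstable Jacobian does \emph{not} produce an invariant smooth volume: it at best normalises the Jacobian along the one-dimensional bundle, and the resulting speed function is only H\"older (or \(\cC^{1}\) under bunching), so the new flow will not be \(\cC^{4}\). Moreover, the Livsic theorem requires transitivity, which is not automatic for an arbitrary codimension-one Anosov flow; this is exactly Verjovsky's theorem~\cite{Ver74}, which you do not invoke. The paper obtains the volume-preserving \(\cC^{\infty}\) model by combining Verjovsky's transitivity with Asaoka's result~\cite{Asa08}, which changes the smooth structure on \(M\) rather than merely the speed of the flow; this step is not a Livsic argument and cannot be replaced by one.

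Your plan for the integrability step is also substantially different from what the paper does. You propose to encode the defect as a scalar cocycle and kill it with a single Livsic-type equation; this is essentially the Simi\'c strategy whose regularity issues derailed~\cite{Sim05}. The paper instead introduces the notion of \emph{special synchronisation}: after Parry's time change (so that \(Df^{t}X=e^{-t}X\)), it builds an explicit sequence \(\psi_{n}\in\mathfrak D\) by iterated local perturbations supported in thin flow boxes around an admissible section, each step solving a characteristic ODE for \(X(\rho)\) to drive the defect \(\Uptheta(\psi_{n})=i_{X_{\psi_n}}d\alpha^{(n)}\) to zero on \(f^{T_n}\mathcal S^{(\tau_n)}\). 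The limit \(\psi^{*}\) is only \(\cC^{0}\), but enough structure in \(\|\cdot\|_{\mathfrak D}\) survives to show the limit flow is Anosov with integrable \(\bE^{su}\). No global cohomological equation is solved, and no transverse-regularity upgrade of the weak holonomies is required; the bunching~\eqref{eq:bunching} is used only to control \(\cC^{1}\) norms in the iteration. Finally, once \(\bE^{su}\) is integrable, the paper does not argue that leaves are compact; it quotes Plante~\cite{Pla72} directly, whose mechanism is that the invariant one-form \(\eta\) is \emph{closed}, yielding a cross-section via a Tischler-type argument.
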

\noindent
In order to prove the above result we first prove a result related to integrability of the \(\bE^{su}\) sub-bundle.
A sub-bundle is said to be \emph{integrable} if there exists a foliation tangent to it.
The integrability (or non-integrability) of sub-bundles in important from a dynamical systems point of view in many situations. 
The sub-bundles  \(\bE^s\) and \(\bE^u\) are both integrable but in general the sub-bundle \(\bE^{su}\) is not \emph{integrable}. 
In particular the set of Anosov flows where the stable and unstable bundles are  not jointly integrable is $\cC^{1}$-open and $\cC^{k}$-dense (for all \(k\in \bN\)) in the set of all Anosov flows~\cite{FMT07} (see references within concerning the prior work of Brin).
Nevertheless we are able to prove integrability of \(\bE^{su}\)  for certain exceptional Anosov flows. 

\begin{mainthm}\label{thm:main}
Let $F:=\{f^{t}, t\in\mathbb{R}\}:\cM \to \cM$ be a\footnote{We say that a flow is \(\cC^{1}\) if the associated vector field is \(\cC^{1}\).} 
 $\cC^{2}$ codimension-one volume preserving Anosov flow  with $\dim(M)>3$. If $F$ is \textit{specially synchronisable} then it is topologically equivalent to an Anosov flow whose stable and unstable are jointly integrable.
\end{mainthm}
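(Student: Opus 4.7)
The plan is to exploit the special synchronisability of $F$ to produce, via a time change, a flow topologically equivalent to $F$ whose stable and unstable bundles are jointly integrable. Since the conclusion in Theorem~\ref{thm:main} is invariant under $\cC^{0}$ orbit equivalence, it suffices to find one particular reparametrization of $F$ with this property; the role of the hypothesis will be to guarantee that the required reparametrization exists with enough regularity to preserve the Anosov structure.

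First I would extract from the synchronisability hypothesis a positive function $\tau \colon \cM \to \bR$ that equates, up to a coboundary along the flow, the infinitesimal contraction along $\bE^{s}$ with the infinitesimal expansion along $\bE^{u}$. Because $F$ is codimension-one, one of the two bundles is one-dimensional and its Jacobian cocycle is a scalar; volume preservation then forces the cocycle on the partner bundle to lie in the opposite cohomology class. This lets $\tau$ be read off as the solution of a Livšic-type equation, inheriting its regularity from the weak foliations and from the synchronisation data.

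Second, I would define the reparametrized flow $\tilde F$ by rescaling the generating vector field by $1/\tau$ and verify that, in the new time, the transverse Jacobians of the stable and unstable strong holonomies become Hölder continuous, so that the codimension-one weak foliation becomes $\cC^{1+\alpha}$. Conceptually, once the expansion and contraction rates are \emph{matched} by the time change, the pointwise infinitesimal distortion of the stable holonomy along a weak-unstable transversal is absorbed into the new flow speed. This is where $\dim \cM > 3$ is essential: the higher-dimensional partner bundle carries enough directions for a Journé-type saturation argument to upgrade joint Hölder regularity along stable and unstable to true $\cC^{1+\alpha}$ regularity transverse to the flow.

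Third, armed with a $\cC^{1+\alpha}$ codimension-one weak foliation for $\tilde F$, I would invoke the Ghys--Simi\'c criterion already used in~\cite{Ghy89,Sim96,Sim97}: for a codimension-one Anosov flow on a manifold of dimension at least four, $\cC^{1+\alpha}$ regularity of the codimension-one weak foliation is enough to force $\bE^{su}$ to be integrable, since one can then saturate local transversals by strong leaves and check the Frobenius-type condition pointwise. Applying this to $\tilde F$ and going back to $F$ via the time change gives Theorem~\ref{thm:main}.

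The main obstacle is the second step: showing that the synchronisation time change genuinely yields the claimed transverse regularity, starting from only $\cC^{2}$ regularity of the flow and a priori only Hölder invariant bundles. This is precisely where the earlier attempts~\cite{Sim05,Asa08} encountered a gap, and I expect the combined use of volume preservation, the codimension-one assumption, and $\dim \cM > 3$ to be essential in closing it; the one-dimensional transverse direction is the sensitive one, since there Journé-type bootstrapping is unavailable and the improvement in regularity must come directly from the synchronisation cocycle.
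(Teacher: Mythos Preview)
Your proposal has a genuine gap, and it also misreads the hypothesis. In the paper, ``specially synchronisable'' is not a Liv\v{s}ic-type matching of expansion and contraction rates yielding a single time change $\tau$; it is Definition~\ref{def:sync}, which hands you a $\cC^{1}$ one-form $\alpha$ with $X\in\ker\alpha$, $\alpha(Z)\equiv1$, a \emph{sequence} $\{\psi_n\}\subset\mathfrak D$ with $\|\psi_n-\psi_{n+1}\|_{\mathfrak D}\le\theta^n$, and an admissible section $\mathcal S$ together with times $T_n\to\infty$, $\tau_n\to 0$ so that $\|i_{X_{\psi_n}}\circ d\alpha^{(n)}|_{f^{T_n}\mathcal S^{(\tau_n)}}\|\le\theta^n$. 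Your first step therefore invents a different hypothesis; nothing in Definition~\ref{def:sync} gives you a single $\tau$ solving a cohomological equation, and conversely the actual data (a sequence of time changes plus a decay estimate of a Lie-derivative-type quantity on thinner and thinner flow boxes) is never used in your outline.

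More seriously, your second step is exactly the step that failed in~\cite{Sim05}, and you say so yourself: you want a time change after which the codimension-one weak foliation becomes $\cC^{1+\alpha}$, but you acknowledge that Journ\'e-type bootstrapping is unavailable in the one-dimensional transverse direction and offer no replacement. The paper does \emph{not} attempt to upgrade regularity of the weak foliation and does not invoke the Ghys--Simi\'c criterion at all. Instead it takes $\psi^*=\lim\psi_n$, shows (Proposition~\ref{prop:Anosovweak}) that $Z/\psi^*$ still generates an Anosov flow $F^*$, and then constructs integral submanifolds of $\ker\eta^*$ directly. Concretely, it builds piecewise $\cC^{1}$ approximations $\eta^{(n)}$ of the invariant one-form $\eta^*$ using the return-time functions to $f^{T_n}\mathcal S$, uses the synchronisability estimate to get $\|i_{X^{(n)}}\circ d\eta^{(n)}\|\le C\theta^{n/2}$, defines candidate submanifolds $\mathcal W_n$ by pushing a local unstable leaf of $F^*$ along the flow of $X^{(n)}$, and applies Stokes' theorem on disks in $\mathcal W_n$ to show $\int_{\gamma}\eta^*\to 0$ along the transverse curves, hence $T\mathcal W_n\to\ker\eta^*$. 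The limit submanifold is tangent to $\bE^s_*\oplus\bE^u_*$, giving joint integrability for $F^*$, which is orbit equivalent to $F$. The whole point is to bypass the regularity bootstrap you are proposing; to repair your outline you would need to replace your second and third steps with this Stokes-theorem limiting construction.
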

\noindent
One of the main difficulties of integrating these bundles is lack of regularity and typically the sub-bundles of Anosov flows are merely H\"older. 
In our setting $\bE^{s}$ and $\bE^{cu}$ are $\cC^{1 }$ but it cannot be hoped in general that $\bE^{u}$ is better than H\"older.
The ideas of the proof of the above theorem are inspired by techniques developed by Luzzatto, T\"ureli and the  author~\cite{LTW16} for integrating bundles that are just continuous. 
 
 Section \ref{sec:Anosovflow} is devoted to recall general facts about Anosov flows and their reparametrisations. In this section, we will recall a bunching condition that guarantee a certain regularity of the stable bundle which is used to define the Parry's synchronisation. This is the parametrisation so that the unstable Jacobian is constant. We believe that this passage to the Parry's synchronisation is not necessary for the proof of the conjecture but it simplifies the calculation for constructing the adequate time change for integrability.

In Section \ref{sec:Spec}, we give the definition of special synchronisation for codimension-one Anosov flow. In Section \ref{sec:Int}, we discuss the integrability of continuous bundles and the existence of global cross section. In Section \ref{sec:proofmain} we prove Theorem \ref{thm:main}. Section \ref{sec:time} is devoted to prove that every codimension-one Anosov flow on a manifold of dimension at least four is specially synchronisable.

\begin{rem}\normalfont
  That the Vejovsky conjecture is true implies that codimension-one Anosov flows on higher dimensional manifolds only exist on manifolds with solvable fundamental group.
  Consequently Plante's result~\cite{Pla81} covered all cases but we do not know how to directly show this and take advantage of the previous work.  
\end{rem}

%\subsection*{Proof of Theorem~\ref{thm:istrue}}
Combining Theorem~\ref{thm:main} with various known results allows us to prove the Theorem~\ref{thm:istrue}.
First we recall the following two results.

\begin{thmm}[Verjovsky {\cite[Theorem 1.1]{Ver74}}]\label{Ver74}
Every codimension-one Anosov flow on a closed manifold of dimension greater than three\footnote{  As shown by Franks \& Williams~\cite{FW80}, in the case where both the stable  and unstable bundles are at  least two dimensional there are examples of non-transitive Anosov flows. 
 In the three dimensional case the question of transitivity  of Anosov flows remains open.} is topologically transitive.
\end{thmm}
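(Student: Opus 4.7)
The plan is to show that the non-wandering set $\Omega(F)$ equals $M$; combined with density of periodic orbits in $\Omega(F)$ (automatic for Anosov flows) and Smale's spectral decomposition $\Omega(F) = \Omega_{1} \sqcup \cdots \sqcup \Omega_{k}$ into transitive basic sets, this forces $k=1$ and yields topological transitivity on all of $M$. So the problem reduces to ruling out proper basic sets.

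Without loss of generality assume $\dim \bE^{u} = 1$, so that the centre-stable sub-bundle $\bE^{cs}$ is codimension-one and integrates to a foliation $W^{cs}$ whose leaves have dimension $\dim M - 1 \geq 3$. Each leaf is simply connected, in fact diffeomorphic to $\bR^{\dim M - 1}$, since the centre direction is a line and the stable direction is exponentially contracted along the flow (standard graph-transform/Hadamard-Perrron argument along a chosen orbit in the leaf). Suppose for contradiction $\Omega(F) \neq M$; then spectral decomposition yields a basic set $\Lambda \subsetneq M$ which is a proper attractor whose basin $W^{s}(\Lambda)$ is a proper, open, $F$-invariant subset saturated by $W^{cs}$-leaves, and whose boundary $\partial W^{s}(\Lambda)$ is a compact, nonempty, $F$-invariant set, also $W^{cs}$-saturated.

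The main obstacle, and the place where the hypothesis $\dim M > 3$ is essential, is extracting a contradiction from this configuration. I would invoke Haefliger-type rigidity for codimension-one foliations of compact manifolds whose leaves are simply connected and of dimension at least three: such foliations admit no Reeb-like component and the space of leaves transverse to a given leaf is strongly constrained. Combining this with transversality of the one-dimensional unstable foliation $W^{u}$ --- whose orbits are exponentially expanded and therefore cannot remain in the compact invariant set $\partial W^{s}(\Lambda)$ for all positive time --- the aim is to conclude that every $W^{cs}$-leaf must intersect $\Lambda$, contradicting that $\Lambda$ is a proper subset of $M$. The three-dimensional case genuinely differs, as the geodesic flow on a negatively curved surface obstructs several steps (indeed it is not a suspension, and leaves there are only two-dimensional), so the dimension hypothesis cannot be dropped. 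With $\Omega(F) = M$ established, the basic set $\Lambda_{1} = M$ is a transitive attractor and $F$ is topologically transitive.
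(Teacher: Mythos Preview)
The paper does not prove this statement; it merely cites it as Verjovsky's theorem \cite[Theorem~1.1]{Ver74} and uses it as a black-box ingredient. So there is no ``paper's proof'' to compare against, and your proposal stands or falls on its own.

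As a sketch it has the right architecture---show $\Omega(F)=M$ via spectral decomposition by ruling out proper attractors, and exploit the codimension-one foliation $W^{cs}$---but the argument as written is not a proof. Two concrete gaps:

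\textbf{(i)} The claim that every $W^{cs}$-leaf is diffeomorphic to $\bR^{\dim M-1}$ is not justified by the one-line ``graph-transform/Hadamard--Perron'' remark. Strong stable leaves are planes, but weak stable leaves containing a periodic orbit are mapping tori of a contraction and hence cylinders $\bR^{\dim M-2}\times S^{1}$, not simply connected. Verjovsky's original argument does not need simple connectivity of every leaf; what he actually uses is more delicate foliation theory specific to the situation.

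\textbf{(ii)} The decisive step is entirely deferred: ``I would invoke Haefliger-type rigidity \ldots\ the aim is to conclude that every $W^{cs}$-leaf must intersect $\Lambda$.'' You neither name a precise theorem nor show how to apply it. The sentence about unstable orbits being ``exponentially expanded and therefore cannot remain in the compact invariant set $\partial W^{s}(\Lambda)$'' is also confused: $\partial W^{s}(\Lambda)$ is flow-invariant, so flow orbits certainly do remain in it; what you presumably want is that the one-dimensional \emph{strong unstable leaf} through a boundary point must exit $\partial W^{s}(\Lambda)$ because it is transverse to the $W^{cs}$-saturation. But turning that observation into a contradiction is exactly the nontrivial content of Verjovsky's proof, and it is not supplied here.

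In short: right outline, but the core of the argument is asserted rather than carried out. Since the paper treats the result as a citation, the honest route is either to cite \cite{Ver74} as the paper does, or to actually reproduce Verjovsky's foliation-theoretic argument in full.
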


\begin{thmm}[Asaoka {\cite[Main Theorem]{Asa08}}]\label{Asa08}
 Every topologically transitive codimension-one Anosov flow is topologically equivalent to a \(\cC^{\infty}\) volume-preserving Anosov flow.
\end{thmm}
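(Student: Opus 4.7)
The plan is to start from a topologically transitive codimension-one Anosov flow $F$ on $M$ and produce a topologically equivalent flow that is simultaneously $\cC^{\infty}$ and preserves a smooth volume form. Since topological equivalence permits both a homeomorphism of $M$ and an orbit-preserving time change, one has freedom to reparametrise $F$ \emph{and} to modify the smooth structure on $M$; the argument must exploit both simultaneously.

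First I would exploit the codimension-one foliation structure: with $\dim \bE^{s}=1$ (say), the weak centre-stable foliation tangent to $\bE^{cs}$ has codimension one, and by Plante's theory combined with the transitivity hypothesis this foliation is well-behaved (minimal, or admitting a controlled global transversal), giving local product charts in which $F$ has a normal form. Next, using transitivity to invoke uniqueness of Bowen's measure of maximal entropy and of the forward/backward SRB measures associated to the stable and unstable log-Jacobian cocycles, I would apply a Parry-type time change, chosen so that the reparametrised stable and unstable Jacobians become reciprocal, in order to force the forward and backward SRB measures to coincide. The common measure $\mu$ is then absolutely continuous along the weak foliations and invariant under the reparametrised flow.

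The hard part is to realise $\mu$ as the Riemannian volume for a genuine $\cC^{\infty}$ structure on $M$ lying in the topological equivalence class of $F$. This calls for a simultaneous smoothing of the weak foliations and of the flow using a partition of unity subordinate to the product charts, followed by structural stability to pull the resulting smooth, volume-preserving model back into the topological equivalence class of the original $F$. A standard $\cC^{k}$-to-$\cC^{\infty}$ bootstrap then upgrades the regularity. The main obstacle throughout is controlling the regularity of the conjugating homeomorphism: topological equivalence provides no a priori smoothness, and one must extract it from the interplay between the product structure of the foliations and the time change, which is precisely the technically delicate step in Asaoka's argument.
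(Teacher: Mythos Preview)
The paper does not prove this statement at all: it is quoted verbatim as Asaoka's Main Theorem \cite{Asa08} and used as a black box in the reduction of Theorem~\ref{thm:istrue} to Theorem~\ref{thm:main}. There is therefore no ``paper's own proof'' to compare your proposal against; the present paper simply invokes the result and moves on.

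As to the content of your sketch, it gestures in the right direction---Asaoka's argument does hinge on Gibbs/SRB measure theory and on producing a smooth invariant volume via a time change---but it is not a proof. Several steps are placeholders rather than arguments: the phrase ``simultaneous smoothing of the weak foliations and of the flow using a partition of unity'' hides essentially all of the work, since the weak foliations of an Anosov flow are typically only H\"older and cannot be made smooth by local patching; the appeal to structural stability at the end is circular unless you already have a \(\cC^{1}\) model to perturb; and the ``\(\cC^{k}\)-to-\(\cC^{\infty}\) bootstrap'' is asserted, not justified. Asaoka's actual mechanism is more specific: he uses the regularity theory of the one-dimensional strong foliation (via the Livshits--de~la~Llave type results and the \(\cC^{r}\) section theorem) together with a careful construction of a smooth volume form whose density along leaves matches the Gibbs potential, and then shows the resulting flow is genuinely \(\cC^{\infty}\). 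If you intend to reproduce the argument rather than cite it, those are the ingredients you would need to supply.
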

\noindent
Combining the two theorems means that it suffices to consider only $\cC^{\infty}$ volume-preserving Anosov flows.
When talking about  codimension-one  Anosov flow, without loss of generality we suppose \(\dim \bE^s =1\). 
We apply the process of synchronisation, as used by Parry~\cite{Par86}, which involves a time change of the flow. Full details are contained in Section~\ref{sec:Sim} and the pertinent details described in Proposition~\ref{prop:synchronisation}. In particular in Proposition \ref{prop:spesync}

\vspace{.8em}
{\em
\noindent
Suppose that a $\cC^{2 }$ volume-preserving Anosov flow with \(\dim \bE^s =1\) and \(\dim \bE^u \geq2\) is given.
Then the Parry's synchronized flow is topologically conjugate to the original flow and  satisfies the assumptions of Theorem~\ref{thm:main}. }
\vspace{.8em}

\noindent
Consequently, using Theorem~\ref{thm:main}, we conclude that every codimension-one Anosov flow on a manifold of 
dimension greater than three is topologically equivalent to a codimension-one Anosov flow with integrable \(\bE^{su}\). 
Plante proved the following, using integrability to demonstrate the existence of a global cross-section and the  work of Franks \cite{Fra70} and Newhouse \cite{New70} concerning the classification of Anosov diffeomorphisms.
\begin{thmm}[Plante {\cite[Theorem 3.7]{Pla72}}]\label{Pla72}
Every codimension-one Anosov flow with integrable \(\bE^{su}\) is topologically conjugate to a suspension of a hyperbolic toral automorphism.
\end{thmm}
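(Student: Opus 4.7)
The plan is to exploit the integrability of $\bE^{su}$ to produce a global cross-section for the flow, and then reduce the problem to a diffeomorphism classification result. Denote by $\cF^{su}$ the codimension-one foliation tangent to $\bE^{su}$. Since $\bE^{c}$ is transverse to $\bE^{su}$, the leaves of $\cF^{su}$ are transverse to the flow; moreover, $\bE^{su}$ being flow-invariant means that $\cF^{su}$ is permuted by $\{f^{t}\}$. If I can exhibit a single compact leaf $\Sigma$ of $\cF^{su}$ that meets every orbit, then $M$ is a mapping torus of the first-return map $g:\Sigma \to \Sigma$, and the flow is topologically equivalent to the suspension of $g$.

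The heart of the argument, and the main obstacle, is Step 1: showing that $\cF^{su}$ actually yields such a global cross-section. My plan is to use the dynamics of the flow in an essential way. First, invoke (or reprove in the present context) the fact that codimension-one Anosov flows are transitive (in dimension $>3$ this is exactly Verjovsky's theorem stated above), so that every strong stable and strong unstable leaf is dense in $M$. Since each leaf of $\cF^{su}$ is saturated by the one-dimensional leaves of $\cW^{s}$ and by leaves of $\cW^{u}$, the density propagates through the foliation, forcing the holonomy of $\cF^{su}$ around closed transversals to be trivial in a sharp way. One then shows that $\cF^{su}$ is defined by a closed non-singular one-form $\omega$: this can be done either directly via Plante's argument that on a manifold of non-positive Euler characteristic an invariant transverse foliation admits such a form, or via an averaging of the transverse volume using the flow. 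A Tischler-type approximation of $\omega$ by a rational closed one-form then yields a fibration $\pi:M\to S^{1}$ whose fibres are compact, transverse to the flow, and hence serve as the desired cross-section $\Sigma$.

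Once $\Sigma$ is in hand, the first-return map $g:\Sigma\to\Sigma$ inherits the hyperbolic splitting $\bE^{s}\oplus \bE^{u}$ restricted to $\Sigma$, so $g$ is an Anosov diffeomorphism. Because $\dim\bE^{s}=1$ (or $\dim\bE^{u}=1$, by symmetry), it is a codimension-one Anosov diffeomorphism on a closed manifold. At this point the classification results of Franks~\cite{Fra70} and Newhouse~\cite{New70} apply directly: any codimension-one Anosov diffeomorphism of a closed manifold is topologically conjugate to a hyperbolic automorphism of a torus. Suspending this conjugacy gives a topological equivalence between $\{f^{t}\}$ and the suspension of the toral automorphism, completing the proof. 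I expect the delicate point to remain Step 1 — promoting an invariant transverse foliation to a genuine cross-section — while Steps 2–3 are largely mechanical given transitivity and the Franks–Newhouse theorem.
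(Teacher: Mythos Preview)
Your proposal is correct and follows essentially the same route the paper attributes to Plante: integrability of \(\bE^{su}\) forces the canonical invariant one-form \(\eta\) (with \(\ker\eta=\bE^{su}\), \(\eta(Z)=1\)) to be closed, a Tischler-type argument then produces a global cross-section, and the first-return map is a codimension-one Anosov diffeomorphism to which the Franks--Newhouse classification applies. One small expositional wrinkle: the cross-section \(\Sigma\) obtained from the rational approximation of \(\eta\) need not be an actual leaf of \(\cF^{su}\), so your opening sentence about ``a single compact leaf \(\Sigma\) of \(\cF^{su}\)'' is slightly misleading, though your later Tischler paragraph handles this correctly.
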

\noindent
Since we have shown that it suffices to consider flows with 
 integrable \(\bE^{su}\) the proof of Theorem~\ref{thm:istrue}  
follows from the above result.

\section{Anosov flows and reparametrisation}\label{sec:Anosovflow}

\subsection{Reparametrisation of Anosov flow and Parry's formula}
In this section we review classical results about reparametrisation of Anosov flows. 
Let $F:=\{f^{t}, t\in\mathbb{R}\}: M\to M$ be an Anosov flow generated by a $C^1$ vector field $Z$. Let $\bE^s\oplus\bE^u$ be the corresponding hyperbolic splitting, i.e. there exsits $\lambda_s\in(0,1), \lambda_u>1$ and  $C>0$ such that for every $t>0$ we have
\begin{equation}\label{eq:Anosov1}
\|Df^t|_{\bE^s}\|\leq C\lambda_s^{-t}\quad\text{ and }\quad \|Df^{-t}|_{\bE^u}\|\leq C\lambda_u^{-t}
\end{equation}
where $\|.\|$ is the norm given by the Riemannian structure.

The definition of Anosov flow implies the existence of an invariant  $1$-form $\eta$ on $TM$, i.e.\footnote{ This one form can be explicitely defined by $\ker(\eta)=\bE^s\oplus\bE^u$ and $\alpha(Z)=1$. The invariance of $\ker(\eta)$ together with $\eta(Z)=1$ implies that $(f^{t})^*\eta=\eta$ $\forall t\in\mathbb{R}.$ }

\begin{equation}\label{eq:in1form}
(f^{t})^*\eta=\eta\quad\text{ and }\quad \eta(Z)\equiv1\quad \forall t\in\mathbb{R}.
\end{equation}
The codimension-one condition implies the definition of $1$-form $\omega$ such that 
\[
\ker(\omega)=\bE^{cu}:=\bE^u\oplus<Z>.
\]
In the next section we use a specific vector field that generates $X$ that generates the stable bundle and imposes the condition 
\[
\omega(X)\equiv1.
\]
so that $\omega$ is well defined. It is standard that if $\alpha_0$ is a $1$-form such that $\alpha_0(Z)\equiv1$ then the $1$-form 
\[
\alpha=\alpha_0-\alpha_0(X)\omega
\]
satisfies the following: there exists $C_1>0,\zeta>0$ such that for all $t>0$
\begin{equation}\label{eq:conv}
\|(f^{-t})^*\alpha-\eta\|\leq C_1e^{-\zeta t}.
\end{equation} 
In other words, the family of $1$-forms defines a dynamical approximation of $\eta$. We emphasize that in all the situations that this approximation will be used;  the vector field $X$, the $1$-forms $\omega$ and $\alpha_0$ are $C^1$ which will imply that $\{\alpha^{(t)}:=(f^{-t})^*\alpha, t>0\}$ is the $C^1$ approximation of $\eta$.

Let $\psi: M\to(0,\infty)$ be a $C^1$ function. It is standard that  if a flow generated by a vector field $Z$ leaves invariant a smooth volume form $m$ then the flow generated by $Z/\psi$ leaves invariant the volume form $\psi m$. In the case of Anosov flow, Anosov \cite{Ano69} proves that if $F=\{f^{t}, t\in\mathbb{R}\}$ is an Anosov flow generated by $Z$ then the flow $F_\psi:=\{f^{t}_\psi, t\in\mathbb{R}\}$ generated by $Z/\psi$ is also an Anosov flow. Furthermore, Parry \cite[\S6]{Par86} gives an explicite formulation of the associated invariant bunldes. More precisely, if $\bE_{\psi}^{s}\oplus\bE^{u}_{\psi}$ are the corresponding invariant splitting of $F_{\psi}$ we have
$$
\bE^{\sigma}_\psi:=\{Y+\xi^{\sigma}(Y)Z: Y\in\bE^\sigma\}, \sigma=s,u
$$
where
\begin{equation}\label{eq:xi}
\xi^{s}(Y)=\frac{1}{\psi}\int_0^{\infty}d(\psi\circ f^{t})(Y)dt \quad\text{ and }\quad \xi^{u}(Y)=\frac{1}{\psi}\int_0^{\infty}d(\psi\circ f^{-t})(Y)dt.
\end{equation}

One of the main ingredient in proving the conjecture is based on the following buchning condition: If $F$ is a volume preserving $C^2$ Anosov flow with $\dim(\bE^s)=1$ and $\dim(\bE^u)\geq2$ then there exists $\zeta, C>0$ such that:
\begin{equation}\label{eq:bunching}
\|Df^t|_{\bE^s}\|\cdot\|Df^t|_{\bE^u}\|\leq Ce^{-\zeta t}, \quad\text{ for all }\quad t>0.
\end{equation}
This bunching condition was initially observed by Plane in \cite{Pla72} and was recently used by the author and Butterley \cite{BW19} to construct  open sets of exponentially mixing Anosov flows. 

\subsection{Parry's synchronisation}\label{sec:Sim}

Suppose that $\widetilde F=\{\widetilde f^t: t\in\mathbb{R}\}:\cM \to \cM$ is a $\cC^{2 }$ volume-preserving Anosov flow with one-dimensional stable bundle % \(\dim \tilde\bE^u =1\) 
and unstable bundle at least two-dimensional. % \(\dim \tilde\bE^s \geq2\).
The $\cC^{r}$ section theorem of Hirsh, Pugh and Shub~\cite{HPS77} implies that the unstable bundle $\tilde\bE^s$ is $\cC^{1 }$ because of the bunching between $\tilde\bE^s$ and $\tilde\bE^{cu}$. 
 The $\cC^{r}$ section theorem also means that the centre stable bundle $\tilde\bE^{cs}$ is $\cC^{1 }$ (since this sub-bundle is codimension-one). 
We may assume,\footnote{Simi\'c \cite{Sim97} also has an argument like this but we want to ensure better smoothness in our setting. We guarantee immediate expansion / contraction but do not make the stable and unstable bundles orthogonal and hence maintain the regularity of the metric.} by a smooth change of metric~\cite[Appendix~A]{BL13} if required, that the flow is immediately expanding (resp.\@ contracting) in the sense that there exists $\gamma>0$ such that $\norm{\smash{D{\tilde f}^{t} |{\tilde\bE^s_{q}}}}\leq e^{-\gamma t}$ and $\norm{\smash{D{\tilde f}^{-t}|{\tilde\bE^u_{q}}}}\leq e^{-\gamma t}$ for all $t\geq 0$, \(q\in \cM\).
Synchronisation, exactly as below, was used by Parry~\cite{Par86} in order to modify an Anosov flow in such a way that the SRB measure coincides with the measure of maximal entropy. It was also used by Simi\'c~\cite{Sim97} to prove the Verjovsky conjecture with additional regularity assumptions.

We denote by $\jac(D{\tilde f}^{t}| \tilde\bE^s)$  the unstable Jacobian. % 
Let
\[
 u(q) := \lim_{t\to 0} \tfrac{1}{t} \log \jac(D{\tilde f}^{t}| \tilde\bE^s)(q)
 \]
and observe that  $u$ is $\cC^{1 }$ and negative. 
  We define the new vector field 
\[
  Z=\frac{1}{u}\tilde Z
\]
and consider \(F=\{f^t:t\in\mathbb{R}\}: \cM \to \cM\), the flow associated to $ Z$.

\begin{prop}\label{prop:synchronisation}
Suppose that $\widetilde F=\{\tilde f^t:t\in\mathbb{R}\}: \cM \to \cM$ is a $\cC^{2 }$ volume-preserving Anosov flow with \(\dim \tilde\bE^s =1\) and \(\dim \tilde\bE^u \geq2\).
Let  \(F=\{f^t:t\in\mathbb{R}\}: \cM \to \cM\) denote the \emph{synchronised} flow defined as above. Then
\begin{enumerate}
\item
\(F\) is a $\cC^{1 }$ Anosov flow;
\item
\(F\) is topologically equivalent to \(\widetilde F\);
\item
There exists a  non-zero $\cC^{1}$ vector field $X$ such that,  
\[
Df^{t} X = e^{-t}X,
\quad \text{for all \( t\in \bR\)}.
\] 
\end{enumerate}
\end{prop}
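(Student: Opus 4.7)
The plan is to handle parts (1) and (2) by the classical time-change theory for Anosov flows and to devote the main effort to (3), where the whole point of the synchronisation is precisely to normalise the infinitesimal stable-contraction rate to a constant; this is exactly what yields $Df^{t}X=e^{-t}X$.

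For (1), I first verify that $Z$ is a $\cC^{1}$ vector field. The bunching condition \eqref{eq:bunching} together with the $\cC^{r}$ section theorem of Hirsch--Pugh--Shub gives that $\tilde\bE^{s}$ is $\cC^{1}$, so $q\mapsto\jac(D\tilde f^{t}|_{\tilde\bE^{s}})(q)$ is $\cC^{1}$ in $q$ jointly with its $t$-derivative; differentiating at $t=0$ yields $u\in\cC^{1}$. The immediate contraction assumption $\norm{D\tilde f^{t}|_{\tilde\bE^{s}}}\le e^{-\gamma t}$ forces $|u|\ge\gamma>0$, so $1/u\in\cC^{1}$ is nonvanishing and $Z=\tilde Z/u$ is a $\cC^{1}$ vector field. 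Anosov's theorem on time-changes (cited above) then provides the hyperbolic structure, completing (1). For (2), the orbits of $F$ and $\widetilde F$ coincide as subsets of $\cM$ (a time change is a pure reparametrisation), so the identity map on $\cM$ is an orientation-preserving orbit equivalence.

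For (3), I would begin from Parry's formula \eqref{eq:xi}, which describes the synchronised stable bundle as $\bE^{s}_{\psi}=\{Y+\xi^{s}(Y)Z:Y\in\tilde\bE^{s}\}$. Choose a nonvanishing $\cC^{1}$ section $\tilde X$ of the one-dimensional $\cC^{1}$ bundle $\tilde\bE^{s}$, normalised so that
\[
D\tilde f^{t}\tilde X(q)=\exp\!\Bigl(\int_{0}^{t}u(\tilde f^{\sigma}q)\,d\sigma\Bigr)\tilde X(\tilde f^{t}q).
\]
Define
\[
X:=\tilde X+\xi^{s}(\tilde X)\,Z.
\]
Then $X$ is $\cC^{1}$ since the Parry integral and its first spatial derivatives converge by the exponential decay of $D\tilde f^{t}Y$ for $Y\in\tilde\bE^{s}$, and $X$ lies in $\bE^{s}_{\psi}$ by construction. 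The substantive step is then to verify the contraction rate: parametrise $f^{T}(q)=\tilde f^{s(T)}(q)$, read off $ds/dT$ from $Z=\tilde Z/u$, substitute into the exponent $\int_{0}^{s(T)}u(\tilde f^{\sigma}q)\,d\sigma$ and change variables so that it collapses to $-T$ by design of the synchronisation; combined with the way the Parry correction $\xi^{s}(\tilde X)Z$ transports under $Df^{T}$ (using the $F$-invariance of the flow-direction and the definition of $\xi^{s}$), this gives $Df^{T}X=e^{-T}X$.

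The main obstacle is the final chain-rule computation in (3): the bundle $\bE^{s}_{\psi}$ sits as the graph of $\xi^{s}$ over $\tilde\bE^{s}$ inside $\tilde\bE^{s}\oplus\langle\tilde Z\rangle$, so one has to track simultaneously the contraction of the $\tilde\bE^{s}$-component and the evolution of the $Z$-component under the reparametrised flow, and check that the cancellations forced by the synchronisation really do produce the clean rate $e^{-T}$ on the nose. A secondary technical point is ensuring that $\tilde\bE^{s}$ admits a global nonvanishing $\cC^{1}$ section, which should follow from orientability of the one-dimensional stable foliation in the transitive codimension-one volume-preserving setting (and in any case can be arranged without loss of generality for Theorem~\ref{thm:main} by passing to a double cover).
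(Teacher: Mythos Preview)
Your treatment of (1) and (2) matches the paper's. For (3) you take a genuinely different route: you propose to verify $Df^{T}X=e^{-T}X$ by parametrising $f^{T}=\tilde f^{\,s(T)}$, tracking the $\tilde\bE^{s}$- and $\tilde Z$-components of $X$ separately under the chain rule, and watching the change of variables $\sigma=s(\tau)$ collapse the exponent $\int_{0}^{s(T)}u\circ\tilde f^{\sigma}\,d\sigma$ to a constant multiple of $T$. The paper instead introduces the $1$-form $\omega$ with $\ker\omega=\tilde\bE^{cu}$ and $\omega(\tilde X)=1$, writes $d\omega=\eta\wedge\omega$, and computes $(\mathcal L_{Z}\omega)(X)=\eta(Z)=\eta(\tilde Z)/u=u/u=1$ via Cartan's formula (Lemmas~\ref{lem:goodeta} and~\ref{lem:goodX}). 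The advantage of the paper's approach is that the normalisation $\omega(X)=1$ picks out the scalar rate directly, with no need to track the $\tilde Z$-correction term through $Df^{T}$; your approach is more hands-on and would also work, but the ``cancellations forced by the synchronisation'' you allude to are exactly what the $1$-form calculus packages cleanly. (A minor point: by Parry's formula \eqref{eq:xi} the correction term should be $\xi^{s}(\tilde X)\,\tilde Z$, not $\xi^{s}(\tilde X)\,Z$; compare the paper's Lemma~\ref{lem:regularity}.)

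There is, however, a real gap in your regularity argument. You assert that $X$ is $\cC^{1}$ ``since the Parry integral and its first spatial derivatives converge by the exponential decay of $D\tilde f^{t}Y$ for $Y\in\tilde\bE^{s}$''. That decay gives convergence of the integral itself, but it does \emph{not} control its spatial derivatives in centre-unstable directions. Writing $\tilde X(u\circ\tilde f^{t})=\jac(D\tilde f^{t}\!\mid\!\tilde\bE^{s})\cdot(\tilde Xu)\circ\tilde f^{t}$ and differentiating in a direction $v$ produces a term of size roughly $\jac(D\tilde f^{t}\!\mid\!\tilde\bE^{s})\cdot\norm{D\tilde f^{t}v}$, which for $v\in\tilde\bE^{u}$ need not decay from stable contraction alone. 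What forces it to decay is precisely the bunching estimate \eqref{eq:bunching}, i.e.\ $\norm{D\tilde f^{t}|_{\tilde\bE^{s}}}\cdot\norm{D\tilde f^{t}|_{\tilde\bE^{u}}}\le Ce^{-\zeta t}$, and this is the content of the paper's Lemma~\ref{lem:regularity}. You invoke bunching to get $\tilde\bE^{s}$ (hence $u$) of class $\cC^{1}$, but you need to invoke it a second time here; the $\cC^{r}$ section theorem cannot do this step for you because the synchronised flow is only $\cC^{1}$.
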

\noindent
The first two claims of the proposition are simple and as described by Parry~\cite{Par86}.
 For the convenience of the reader, the remainder of this section is devoted to collecting together the necessary details for the proof of the above.
 The existence of the vector field $X$ with the appropriate properties was proven by Simi\'c~\cite[\S4]{Sim97}.
 The required regularity of $X$ was claimed in the unpublished manuscript of Simi\'c~\cite[Lemma~3.3 of ver.\@ 3]{Sim05} but for completeness we include here the proof. 
\begin{rem}\normalfont
The existence of such a vector field $X$, as given in Proposition~\ref{prop:synchronisation},  is a strong property which tells us a lot about the global behaviour of the flow. 
Such a vector field, if it exists, is unique up to multiplication by a constant.
\end{rem}
The {synchronised} flow is a $\cC^{1 }$ Anosov flow because ${1}/{u}$ is $\cC^{1 }$.
Since the new flow is obtained by modifying only the speed of the flow, the new flow has exactly the same orbits as the original, i.e., it is topologically equivalent to the original flow. 
The central stable and central unstable bundles for the synchronised flow are identical to the bundles prior to synchronisation. 
However there is no reason to expect the new unstable bundle $ \bE^s$ to coincide with $\tilde\bE^s$ (and similarly for the stable bundle). 

In the remainder of this section we describe the proof of the final claim of Proposition~\ref{prop:synchronisation}. Without loss of generality we suppose that $\tilde\bE^{s}$ is orientable (if not then we pass to a $2$-cover of $\cM$). 
Fix $\tilde X$ a unit vector field tangent to $\tilde\bE^{s}$. 
Let $\omega$ be the one-form defined by requiring
\[
\ker \omega = \tilde\bE^{cu},
\quad
\omega(\tilde X)=1.
\]
Since the two sub-bundles used to define $\omega$ are both $\cC^{1 }$ we know that $\omega$ is $\cC^{1 }$.
By Frobenius, since $\tilde\bE^{cu}$ is integrable, there exists a continuous $1$-form $\eta$ such that
\[
\extd \omega = \eta \wedge \omega.
\]
\begin{lem}
\label{lem:goodeta}
Let $\eta$, $\tilde Z$ and $u$ be defined as above. Then $\eta(\tilde Z) = u$.
\end{lem}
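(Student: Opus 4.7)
The plan is to leverage the invariance of the centre-unstable distribution $\tilde\bE^{cu}=\ker\omega$ under the flow $\tilde f^{t}$, which forces $(\tilde f^{t})^{*}\omega$ to be a pointwise scalar multiple of $\omega$, and then to extract $u$ as the infinitesimal version of that scalar by differentiating at $t=0$. Once this is set up, Cartan's formula applied to $\tilde Z$, combined with $\tilde Z\in\ker\omega$ and the Frobenius identity $\extd\omega=\eta\wedge\omega$, produces $\eta(\tilde Z)$ on the nose.

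More concretely, since $D\tilde f^{t}$ preserves $\tilde\bE^{cu}$, the $1$-form $(\tilde f^{t})^{*}\omega$ has the same kernel as $\omega$ and is nowhere vanishing, hence there exists a positive function $\mu_{t}$ on $\cM$ with $(\tilde f^{t})^{*}\omega=\mu_{t}\omega$. To identify $\mu_{t}$ I would evaluate both sides on the unit field $\tilde X$ spanning $\tilde\bE^{s}$: using $\omega(\tilde X)=1$ and writing $D\tilde f^{t}\tilde X=c_{t}\,\tilde X\circ \tilde f^{t}$ (a scalar multiple because $\tilde\bE^{s}$ is one-dimensional, orientable, and invariant), one reads off $\mu_{t}=c_{t}=\jac(D\tilde f^{t}|\tilde\bE^{s})$ with the positive sign. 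Differentiating at $t=0$ and using the definition of $u$, this yields
\[
\cL_{\tilde Z}\omega=\left.\tfrac{d}{dt}\right|_{t=0}(\tilde f^{t})^{*}\omega=u\,\omega.
\]

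To finish, I would apply Cartan's magic formula $\cL_{\tilde Z}\omega=\extd(\iota_{\tilde Z}\omega)+\iota_{\tilde Z}(\extd\omega)$. Since $\tilde Z\in\tilde\bE^{cu}=\ker\omega$ the term $\iota_{\tilde Z}\omega$ vanishes, and using $\extd\omega=\eta\wedge\omega$ together with $\omega(\tilde Z)=0$ one gets
\[
\iota_{\tilde Z}(\eta\wedge\omega)=\eta(\tilde Z)\,\omega-\omega(\tilde Z)\,\eta=\eta(\tilde Z)\,\omega.
\]
Comparing with the previous display and cancelling the nowhere-zero form $\omega$ gives $\eta(\tilde Z)=u$. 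The only delicate point in the argument is matching signs when identifying $c_{t}$ with $\jac$, but this is automatic once $\tilde\bE^{s}$ is assumed oriented and $\tilde X$ is chosen unit, so I do not anticipate any real obstacle; the whole statement is essentially a repackaging of the fact that $u$ is the infinitesimal generator of the codimension-one conformal factor of $\omega$ along the flow.
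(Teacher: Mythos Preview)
Your proof is correct and follows essentially the same route as the paper: both arguments combine the observation $(\tilde f^{t})^{*}\omega=\jac(D\tilde f^{t}|\tilde\bE^{s})\,\omega$ (hence $\cL_{\tilde Z}\omega=u\,\omega$) with Cartan's formula and the Frobenius relation $\extd\omega=\eta\wedge\omega$, using $\tilde Z\in\ker\omega$. The only cosmetic difference is that the paper evaluates everything on the vector $\tilde X$ from the start, whereas you work at the level of $1$-forms and cancel the nowhere-vanishing $\omega$ at the end.
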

\begin{proof}
Since $\omega(\tilde X) = 1$ and $\tilde Z \in \ker \omega$,  %$\omega(\tilde Z) = 0$,
\[
\begin{aligned}
\eta(\tilde Z) &= 
\eta(\tilde Z)\omega(\tilde X)    - \eta(\tilde X) \omega(\tilde Z) \\
&= (\eta \wedge \omega) (\tilde Z, \tilde X)\\
&= \extd \omega (\tilde Z, \tilde X).
\end{aligned}
\]
 Cartan's formula in this setting reads $\cL_{\tilde Z}\omega = i_{\tilde Z} \extd \omega + \extd (i_{\tilde Z}\omega)$. Since $\tilde Z \in \ker \omega$ this means that $  \extd \omega (\tilde Z, \tilde X) = (\cL_{\tilde Z}\omega)(\tilde X)$.
Consequently $\eta(\tilde Z) =  (\cL_{\tilde Z}\omega)(\tilde X)$.
Observe that 
$\pb{{\tilde f}^{t}}\omega
= \jac(D\tilde f^t|{\tilde\bE^s}) \ \omega$.
Since
$
\cL_{\tilde Z}\omega
=
\left.\frac{d}{dt}\right|_{t=0} \pb{\tilde f^{t}}\omega$
it follows that $\eta(\tilde Z) = (\cL_{\tilde Z}\omega)(\tilde X) = u$.
\end{proof}

Let $X$ be defined as the vector field tangent to $\bE^{s}$ such that $\omega(X) =1$ (subsequently we will obtain an exact formula for $X$).
\begin{lem}
\label{lem:goodX}
Let $F$ be the synchronised flow and let $X$ be the vector field defined above. 
Then $\dm{f^{t}}X = e^{-t}X$ for all $t$.
\end{lem}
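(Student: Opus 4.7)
The strategy is first to show that $\omega$ pulls back by a scalar under the synchronised flow, concretely $\pb{f^{t}}\omega = e^{-t}\omega$, and then to use the one-dimensionality of $\bE^{s}$ together with the normalisation $\omega(X)\equiv 1$ to transfer this identity into the desired statement for $\dm{f^{t}}X$.

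The centre-unstable distribution is intrinsic to the orbit foliation of the flow, so a mere time change leaves it unchanged: $\bE^{cu} = \tilde\bE^{cu} = \ker\omega$ is $\dm{f^{t}}$-invariant, whence $\pb{f^{t}}\omega$ has the same kernel as $\omega$ at every point and therefore $\pb{f^{t}}\omega = g(t,\cdot)\,\omega$ for some smooth scalar $g$ with $g(0,\cdot)\equiv 1$. To identify $g$, I would compute $\cL_{Z}\omega$ by Cartan. Since $Z$ is parallel to $\tilde Z\in\tilde\bE^{cu}=\ker\omega$, we have $i_{Z}\omega = 0$ and so $\cL_{Z}\omega = i_{Z}\extd\omega$; the Frobenius relation $\extd\omega = \eta\wedge\omega$ (already used in the proof of Lemma~\ref{lem:goodeta}) and the contraction identity $i_{Z}(\eta\wedge\omega) = \eta(Z)\omega - (i_{Z}\omega)\eta$ then give $\cL_{Z}\omega = \eta(Z)\,\omega$. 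Applying Lemma~\ref{lem:goodeta}, which says $\eta(\tilde Z) = u$, and tracking the sign through the rescaling defining $Z$ from $\tilde Z$ produces $\eta(Z) = -1$, the sign being forced by the requirement that the forward flow $f^{t}$ contract on $\bE^{s}$. Hence $\cL_{Z}\omega = -\omega$, and integrating the resulting first-order scalar ODE along orbits yields $g(t,\cdot) \equiv e^{-t}$.

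Once $\pb{f^{t}}\omega = e^{-t}\omega$ is in hand, the conclusion is a single line. Since $\bE^{s}$ is one-dimensional and $\dm{f^{t}}$-invariant and $X$ is a nowhere-vanishing section of it, there is a scalar function $h$ with $\dm{f^{t}} X_{q} = h(t,q)\,X_{f^{t}q}$. Pairing with $\omega$ and using $\omega(X)\equiv 1$ gives
\[
h(t,q) \;=\; \omega_{f^{t}q}\!\bigl(\dm{f^{t}}X_{q}\bigr) \;=\; \bigl(\pb{f^{t}}\omega\bigr)_{q}(X_{q}) \;=\; e^{-t}\,\omega_{q}(X_{q}) \;=\; e^{-t},
\]
which is exactly the desired formula.

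The main obstacle is not conceptual but bookkeeping: one must carefully track the sign of $\eta(Z)$ through the definitions of $u$ (a negative logarithmic contraction rate) and of the rescaled vector field $Z$, so that the exponent indeed comes out $-t$ rather than $+t$. After this everything is immediate from Cartan's magic formula and the one-dimensionality of $\bE^{s}$.
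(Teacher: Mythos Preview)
Your argument is essentially the paper's: both use Cartan's formula, the Frobenius relation $d\omega=\eta\wedge\omega$, and $\omega(Z)=0$ to identify $\cL_Z\omega$ with $\eta(Z)\,\omega$, compute the constant $\eta(Z)$ via Lemma~\ref{lem:goodeta}, and then use the normalisation $\omega(X)\equiv1$ to convert the pullback scaling of $\omega$ into the scaling of $X$ under $Df^t$. One bookkeeping point: the direct computation $\eta(Z)=\eta(u^{-1}\tilde Z)=u^{-1}\cdot u=1$ (which is exactly what the paper records) gives $+1$, not the $-1$ you assert, so your ``forced by contraction'' heuristic is not how the sign is actually pinned down here.
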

\begin{proof}
Observe that $\eta(Z) = \eta(\frac{1}{u}\tilde Z) = 1$ by Lemma~\ref{lem:goodeta}.
Since $\bE^{cu} = \tilde\bE^{cu}$ we know that $Z \in \ker \omega$ and by choice  $\omega(X) =1$.
Therefore, similar to above,
\[
\begin{aligned}
\eta( Z) &= 
\eta( Z)\omega( X)    - \eta( X) \omega( Z) \\
&= (\eta \wedge \omega) ( Z,  X)\\
&= \extd \omega ( Z,  X)\\
&=  (\cL_{ Z}\omega)( X).
\end{aligned}
\]
That $(\cL_{ Z}\omega)( X) = 1$ implies that, as required, $ \dm{f^{t}}X = e^{-t}X$ for all $t$.
\end{proof}

To finish the proof of Proposition~\ref{prop:synchronisation} it would suffice to show that $\bE^u$   is a $\cC^{1}$ sub-bundle.
Unfortunately this doesn't follow  from the $\cC^{r}$ section theorem
~\cite{HPS77}   because the synchronised flow is merely $\cC^{1 }$ and so we need to argue directly and use the connection to the original flow.

\begin{rem}\normalfont
 In this proof we are not using the full strength of the bunching~\eqref{eq:bunching} which we have in this setting.
 If required, using the formulae given below, it can be shown that $\bE^u$   is a $\cC^{1+\alpha}$ sub-bundle for some \(\alpha>0\).
 We don't pursue this direction because it is not required for our present purposes.
\end{rem}

\begin{lem}
\label{lem:regularity}
Let $X$ be as  defined above. Then $X$ is a $\cC^{1}$ vector field.
\end{lem}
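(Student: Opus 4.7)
The plan is to use Parry's explicit formula~\eqref{eq:xi} to identify $X$ and then leverage the bunching condition~\eqref{eq:bunching} that is available in this codimension-one setting. Applied with time change $\psi=-u$ (made positive), Parry's description of the new stable bundle reads
\[
\bE^s = \{Y + \xi^s(Y)\tilde Z : Y \in \tilde\bE^s\}.
\]
Since $\omega(\tilde X)=1$ and $\omega(\tilde Z)=0$ (as $\tilde Z \in \tilde\bE^{cu} = \ker \omega$), the vector field
\[
X = \tilde X + \xi^s(\tilde X)\,\tilde Z
\]
lies in $\bE^s$ and satisfies $\omega(X)=1$; by the uniqueness observed in the preceding remark this coincides with the $X$ constructed in Lemma~\ref{lem:goodX}. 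Both $\tilde X$ and $\tilde Z$ are $\cC^{1}$ (the former by the $\cC^{r}$ section theorem applied to the bunched bundle $\tilde\bE^s$, the latter because $\widetilde F$ is $\cC^{2}$), so the task reduces to showing that the scalar function $\xi^s(\tilde X):\cM\to\bR$ is $\cC^{1}$.

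Recall from~\eqref{eq:xi} that
\[
\xi^s(\tilde X)(q) = \frac{1}{\psi(q)} \int_{0}^{\infty} d\psi_{\tilde f^{t}(q)}\bigl(D\tilde f^{t}_q \tilde X(q)\bigr)\,dt .
\]
Because $\tilde X\in\tilde\bE^{s}$ and the original flow is immediately contracting on $\tilde\bE^{s}$, one has $\norm{D\tilde f^{t}_q \tilde X(q)}\leq e^{-\gamma t}$, which gives absolute convergence and hence continuity of $\xi^{s}(\tilde X)$. The strategy for $\cC^{1}$ regularity is to differentiate under the integral. Decomposing the direction of differentiation $v\in T_q\cM$ along the splitting $\tilde\bE^{s}\oplus\tilde\bE^{c}\oplus\tilde\bE^{u}$: the component along $\tilde\bE^{s}$ inherits the exponential contraction, and the component along $\tilde\bE^{c}$ amounts to differentiating along the flow and is harmless. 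It is the component in $\tilde\bE^{u}$ that is delicate.

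For $v\in\tilde\bE^{u}$, the partial derivative $\partial_v\bigl(D\tilde f^{t}_q\tilde X(q)\bigr)$ incorporates a factor of $\norm{D\tilde f^{t}|_{\tilde\bE^{u}}}$, which grows exponentially in $t$ and naively destroys the decay coming from $\norm{D\tilde f^{t}|_{\tilde\bE^{s}}}$. This is exactly where the bunching condition~\eqref{eq:bunching},
\[
\norm{D\tilde f^{t}|_{\tilde\bE^{s}}}\cdot\norm{D\tilde f^{t}|_{\tilde\bE^{u}}} \leq C\,e^{-\zeta t},
\]
rescues the argument: the product of the stable contraction available in the integrand and the unstable expansion introduced by differentiation is still exponentially small. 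Using the one-dimensionality of $\tilde\bE^{s}$ to factor the integrand as a scalar cocycle (given by $\norm{D\tilde f^{t}|_{\tilde\bE^s}}$, which is $\cC^{1}$ in $q$) times $d\psi(\tilde X)$ evaluated along the orbit, and combining with the $\cC^{1}$ regularity of $\tilde\bE^{s}$ so that the geometric ingredients are differentiable, one obtains a derivative integrand whose modulus is dominated by $C\,e^{-\zeta t}$ uniformly in $q$. Differentiation under the integral is thereby justified, and this yields $\xi^s(\tilde X)\in\cC^{1}$, and hence $X\in\cC^{1}$. The main obstacle throughout is the control of the unstable-direction derivative; the bunching condition is the essential input that defeats it.
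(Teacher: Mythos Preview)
Your approach is essentially the same as the paper's: write $X=\tilde X+\xi(\tilde X)\,\tilde Z$ via Parry's formula, reduce to showing $\xi(\tilde X)\in\cC^{1}$, factor the integrand as the scalar stable cocycle $\lambda_t(q)=\jac(D\tilde f^{t}|\tilde\bE^{s})(q)$ times $(\tilde X\psi)\circ\tilde f^{t}$, and use bunching~\eqref{eq:bunching} to control the unstable derivative. The paper carries out exactly this strategy, being more explicit only in that it splits the product-rule derivative into the two terms $D\lambda_t\cdot(\tilde X u)\circ\tilde f^{t}$ and $\lambda_t\cdot D((\tilde X u)\circ\tilde f^{t})$ and bounds $\norm{D\lambda_t}$ by the cocycle/chain-rule identity $D\lambda_t=\sum_{s}\tfrac{D\lambda}{\lambda}(\tilde f^{s}q)\,D\tilde f^{s}(q)\,\lambda_t(q)$ together with bunching; your sentence ``$\lambda_t$ is $\cC^{1}$ in $q$'' glosses over precisely this uniform-in-$t$ estimate, which is the only place where a reader might want more detail.
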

\begin{proof}
We have that \cite[\S6]{Par86}
\[
X= \tilde X +\xi(\tilde X) \tilde Z 
\quad\text{ where }\quad 
 \xi(\tilde X)
 = \frac{1}{u}\int_0^{\infty}\tilde X(u\circ \tilde f^{t})
 \ dt.
\]
The above integral is well defined since \( 
\norm{\smash{\dm{{\tilde f}^{t}}| {\tilde\bE^s}}} \leq e^{-\gamma t} \) and 
the above identity is verified by showing that the bundle defined as above is indeed invariant under the action of the synchronised flow ${\flow{Z}{t}}$ \cite[\S6]{Par86}. 
We merely need to show that $x\mapsto\xi_{x}(\tilde X)$ is \(\cC^{1}\).
As already noted \(\jac(\dm{{\tilde f}^{t}}| \tilde\bE^s)\) is  \(\cC^{1 }\). 
Recall that we are working in a metric where $\tilde\bE^{cu}$ and $\tilde\bE^{s}$ are orthogonal but which agrees with the smooth original metric within the sub-bundles.
This means that \(u\) is as smooth as \(\dm{\flow{\tilde Z}{t}}\) along the unstable bundle and so \(\tilde X(u)\circ {\tilde f}^{t}\) is \(\cC^{1 }\).
To complete the proof of the regularity we take advantage of the bunching~\eqref{eq:bunching} between $\tilde\bE^{cs}$ and $\tilde\bE^{u}$. 

First observe that
\[
 \tilde X(u\circ\flow{Z}{-t})(q) =  \jac(\dm{\tilde f^{t}}| \tilde\bE^s)(q) \cdot  (\tilde Xu)\circ \tilde f^{t}(q).
\]
For convenience let 
\(\lambda_t(q):= \jac(\dm{\tilde f^{t}} | \tilde \bE^u)\).
Differentiating we obtain sum of two terms, 
\begin{equation}
 \label{eq:twoterms}
D\lambda_t(q)   (\tilde Xu)( \tilde f^{t} q)
+
\lambda_t(q)   (D(\tilde Xu))( \tilde f^{t} q) \dm{\tilde f^{t}}(q). 
\end{equation}
We will estimate the $\cC^{1}$ norm of these two terms.
Suppose first that \(t\in \bR\) and note that
$\lambda_t(q) = \prod_{s=0}^{t-1} \lambda(\tilde f^{s}q)$
where, for convenience, we write $\lambda = \lambda_1$.
Consequently
\[
 D\lambda_t(q) = \sum_{s=0}^{t-1} 
    \frac{ D\lambda}{\lambda} (\tilde f^{s}q) D\tilde f^{s}(q) \lambda_t(q).
\]
Bunching means that there exists $\zeta>0$, $C>0$ such that, for all $s\geq 0$, $q \in \cM$,
\(
 \norm{\smash{D\tilde f^{s}(q)}} \lambda_{s}(q) \leq Ce^{\zeta s}
\).
We also observe that
\(\sup_{q\in \cM}  \frac{ \norm{D\lambda}}{\lambda} (q) <\infty\)
and 
\(\frac{\lambda_t(q) }{\lambda_s(q)} = \lambda_{t-s}(\flow{Z}{s}q) \leq e^{-\eta(t-s)}\) for all \(0\leq s \leq t\).
Consequently (increasing \(C>0\) as required)
\[
\begin{aligned}
 \norm{D\lambda_t(q)} 
 &\leq
 \left( \sup_{y\in \cM}  \frac{ \norm{D\lambda}}{\lambda} (y) \right) \sum_{s=0}^{t-1} 
   \norm{\smash{D\tilde f^{s}(q)}}  {\lambda_t(q) }   \\
 &\leq
 C  \sum_{s=0}^{t-1} 
    e^{-\zeta s} \frac{\lambda_t(q) }{\lambda_s(q)}
    \leq
    C e^{-\eta t} 
     \sum_{s=0}^{t-1} 
e^{s(\eta-\zeta)}
= C e^{-\eta t}(e^{t(\eta-\zeta)} - 1).
    \end{aligned}
\]
This estimate, exponentially small with increasing \(t\), holds also for all \(t\in \bR\) by increasing \(C>0\).
For the second term of~\eqref{eq:twoterms} we use the easy estimate
\[
 \lambda_t(q) \norm{\smash{ \dm{\tilde f^{t}}(q)}}
 \leq Ce^{\zeta t}
\]
from the bunching property.
\end{proof}

 The idea of proving Theorem is about introducing a new time change of the synchronized Anosov flow to guarantee the joint integrability of stable and unstable bundle. To this end will need to show the following

\begin{prop}\label{prop:c1s}
Let $F=\{f^t\}$ be a codimension-one Anosov flow that is synchronized as in Proposition \ref{prop:synchronisation}. If $\psi: M\to(0,\infty)$ is a $C^2$ function then the vector field $Z_\psi=Z/\psi$ generates an Anosov flow whose stable bundle is $C^1$.
\end{prop}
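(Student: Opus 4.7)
The plan is to use Parry's formula~\eqref{eq:xi} for the new stable bundle, together with the crucial property $\dm{f^{t}}X=e^{-t}X$ supplied by Proposition~\ref{prop:synchronisation}, to obtain an explicit integral representation of a vector field spanning $\bE^{s}_{\psi}$, and then to justify differentiating that integral in~$q$.

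First I would observe that since $\bE^{s}$ is one-dimensional and spanned by the $\cC^{1}$ vector field $X$ from Proposition~\ref{prop:synchronisation}, formula~\eqref{eq:xi} tells us that the stable bundle of $F_{\psi}$ is spanned by
\[
X_{\psi} := X + \xi^{s}(X)\, Z,
\qquad
\xi^{s}(X)(q) = \frac{1}{\psi(q)}\int_{0}^{\infty} d(\psi\circ f^{t})(X)(q)\, dt.
\]
Since $X$ and $Z$ are $\cC^{1}$ and $\psi\in\cC^{2}$, it suffices to show that the scalar function $q\mapsto\xi^{s}(X)(q)$ is $\cC^{1}$.

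Using $\dm{f^{t}}X=e^{-t}X$ I rewrite the integrand as
\[
d(\psi\circ f^{t})(X)(q)=d\psi_{f^{t}q}\bigl((\dm{f^{t}})_{q}X_{q}\bigr)=e^{-t}\,(X\psi)(f^{t}q),
\]
where $X\psi$ is a $\cC^{1}$ function because $\psi\in\cC^{2}$ and $X\in\cC^{1}$. Thus
\[
\xi^{s}(X)(q)=\frac{1}{\psi(q)}\int_{0}^{\infty} e^{-t}(X\psi)(f^{t}q)\,dt,
\]
and a formal differentiation in~$q$ produces the kernel $e^{-t}\,D(X\psi)(f^{t}q)\,(\dm{f^{t}})_{q}$. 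The task then reduces to bounding $e^{-t}\,\|(\dm{f^{t}})_{q}\|$ by an integrable function of $t$, uniformly in $q$.

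For this I split $T_{q}M=\bE^{s}_{q}\oplus\langle Z_{q}\rangle\oplus\bE^{u}_{q}$: along $\bE^{s}$ one has $e^{-t}\cdot e^{-t}=e^{-2t}$; along the flow direction $\dm{f^{t}}Z=Z$ gives the bound $e^{-t}$; and along $\bE^{u}$ the bunching condition~\eqref{eq:bunching}, combined with $\|\dm{f^{t}}|_{\bE^{s}}\|=e^{-t}$, yields $e^{-t}\,\|\dm{f^{t}}|_{\bE^{u}}\|\leq Ce^{-\zeta t}$. All three contributions decay exponentially, so the differentiated integral converges uniformly in~$q$ and defines a continuous function; hence $\xi^{s}(X)\in\cC^{1}$ and consequently $\bE^{s}_{\psi}$ is $\cC^{1}$. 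The main obstacle is checking that the bunching~\eqref{eq:bunching} survives the passage to the Parry-synchronised (only $\cC^{1}$) flow $F$; this should follow because a time change does not alter the multiplicative interplay between stable and unstable rates, so the original $\cC^{2}$ volume-preserving bunching transfers directly through Parry's explicit description of the new invariant bundles.
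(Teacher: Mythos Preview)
Your proposal is correct and follows essentially the same route as the paper: write $X_{\psi}=X+h_{\psi}Z$ via Parry's formula, rewrite the integrand using $Df^{t}X=e^{-t}X$, and differentiate under the integral, invoking the bunching condition so that $e^{-t}\|Df^{t}\|$ is integrable. The paper's proof is terser (it does not split $TM$ into the three invariant directions and simply asserts that the synchronised flow satisfies the bunching condition), whereas you spell out each direction and explicitly flag the survival of bunching under the Parry time change as the one point that needs checking; your heuristic for that last point is the correct one.
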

\begin{rem}
Observe that Proposition \ref{prop:c1s} would be given by the $C^1$ section Theorem is the flow was $ C^2$, however the synchronized flow is just $C^{1+\theta}$. 
\end{rem}
\begin{proof}The fact that the flow generated by $Z_\psi$ is Anosov follows directly from \ref{Ano69}. To prove that $\bE^s_\psi$ is $C^1$ we will use its  formula. By \eqref{eq:xi}, $\bE^s_\psi$ is generated by the vector field
\[
X_\psi=X+h_\psi Z
\quad
\text{ where } 
\quad
h_\psi=\frac{1}{\psi}\int_0^{\infty}e^{-t}X(\psi)\circ f^t dt.
\]
Therefore to prove that $\bE^s_\psi$ is $C^1$, it is enough to prove that $g_\psi$ is $C^1$. It is easy to see that for any vector field $V\in TM$ we have
\[
\|V(g_\psi)\|\leq \int_0^{\infty}e^{-t}\|Df^t\|\|\psi\|_{C^2}dt
\]
Thus since the synchronized flow satisfies the bunching condition, we have the the integral converges therefore $g_\psi$ is $C^1$.
\end{proof}

\section{Special synchronisation and Integrability}

\subsection{Special synchronisation}\label{sec:Spec}
The aim of this section is to prove Theorem \ref{thm:main} under the special synchronisation condition. throughout this section, we suppose that  we have a $C^1$ codimension-one volume preserving Anosov flow $F=\{f^t,t\in\mathbb{R}\}: M\to M$. 
We also suppose that the stable bundle, which is one dimensional, is spanned by a $C^1$ vector field $X$.

\paragraph{Admissible disk:}
An admissible $\mathcal D$ disk is a codimension-one open $C^1$ submanifold that is transverse to the flow $F$, tangent to $X$ and contains an unstable manifold of $F$.  More precisely, if $\mathcal L_p$ is a local unstable manifold through $p\in M$ and $\varepsilon>0$, a typical admissible disk through $p$ is of the form
\[
\mathcal D_p:=\bigcup_{|s|<\varepsilon}e^{sX}\mathcal L_p
\]
where $e^{sX}$ denotes the time $s$ map of the flow generated by the vector field $X$ through $p.$

\begin{rem}\normalfont
We notice that if the vector field $X$ is $C^1$ then $\mathcal D_p$ defines a $C^1$ submanifold. It has the properties that $X$ is tangent to $\mathcal D_p$ and the local unstable manifold $\mathcal L_p$ is in $\mathcal D_p$. However this disk is  tangent to $\bE^s\oplus\bE^u$ only when the stable and unstable bundles are jointly integrable.
\end{rem}

\paragraph{Admissible section:} An admissible section for $F$ is a finite collection of admissible disk $\mathcal S:=\{\mathcal D_1,\cdots,\mathcal D_\ell\}$ for some $\ell\in\mathbb{N}$ such that:
\begin{enumerate}
\item for every $q\in M$, there exists $t_q^{\pm}\geq0$ such that $f^{t_q^+}(q)\in \mathcal S$ and $f^{-t_q^-}(q)\in \mathcal S$,
\item $\overline{\mathcal D}_i\bigcap\overline{\mathcal D}_j=\emptyset$ for $i\neq j$,
\end{enumerate}
where $\overline{\mathcal D}_i$ defines the closure of ${\mathcal D}_i$.

\paragraph{Admissible flow box:} Given an admissible disk $\mathcal D$ and $\tau>0$ an admissible flow box of length $\tau$ is defined by 
\[
\mathcal U^{(\tau)}:=\bigcup_{|t|<\tau}f^t\mathcal D.
\]

\paragraph{The space of functions:} 
Given a continuous vector field $Y$ on $M$ and a $C^1$ function $\psi$, we write $Y(\psi)=d\psi(Y)$ where $d\psi$ is the $1$-form given by the exterior derivative of the $0$-form \(\psi\). Let $\mathfrak D$ be the space of  functions $\psi$ such that:
\begin{enumerate}
\item $\psi, X(\psi)\in C^1$,
\item \({1}<\psi(q)<2\), for every $q\in M$.
\end{enumerate} 
If $\psi\in\mathfrak D$, we define
\begin{equation}
\|\psi\|_{\mathfrak D}=\max\{\|\psi\|_{C^0}, \|g_{\psi}\|_{C^0}, \sup_{V\in\bE^{cu}, \|V\|=1}\|V(\psi)\|_{C^0},  \sup_{V\in\bE^{u}, \|V\|=1}\|V(g_{\psi})\|_{C^0}\}
\end{equation}
\begin{rem}\normalfont
We notice that $\mathfrak D$ is not a linear space, let alone a Banach space. The motivation of the definition of $\|.\|_{\mathfrak D}$  is to allow the flexibility that the derivative in the stable direction is not controlled however all the quantities that are involve in the definition of the stable and unstable bundle of a reparametrized flow by a function in $\mathfrak D$ are controlled by $\|.\|_{\mathfrak D}$. Precisely the function $\xi$ that is defined in \eqref{eq:xi} is controlled by the norm $\|.\|_{\mathfrak D}$. In particular this space helps to define certain flow whose vector field is not necessarily $C^1$ but are Anosov in the sense of \eqref{eq:Anosov1} as we see in Proposition \ref{prop:Anosovweak}.
\end{rem}

\begin{prop}\label{prop:Anosovweak}
If $\{\psi_n, n>1\}\subset\mathfrak D$ is such that $\|\psi_n-\psi_{n+1}\|_{\mathfrak D}\leq \theta^n$ for some $\theta\in(0,1)$ then $\{\psi_n, n>1\}$ converges in the $C^0$ topology to a function $\psi^*$ such that the vector field $Z_{\psi^*}=Z/\psi^*$ generates a flow that is Anosov as in \eqref{eq:Anosov1}.
\end{prop}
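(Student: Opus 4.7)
The plan is to extract the limit $\psi^*$ as a Cauchy limit in the $\mathfrak D$-seminorm, to define the candidate hyperbolic splitting of $F_{\psi^*}$ via Parry's formula \eqref{eq:xi} as the $C^0$-limit of the splittings of the smooth reparametrisations $F_{\psi_n}$, and to transfer the uniform Anosov estimates from the $F_{\psi_n}$ by passage to the limit.

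First, since $\sum_n\theta^n$ converges and $\|\cdot\|_{\mathfrak D}$ dominates $\|\cdot\|_{C^0}$, the sequence $(\psi_n)$ is Cauchy; hence $\psi_n\to\psi^*$ uniformly, and simultaneously $g_{\psi_n}\to g^*$ uniformly, while $V(\psi_n)$ and $V(g_{\psi_n})$ converge uniformly for continuous unit sections $V$ of $\bE^{cu}$ and $\bE^u$ respectively. The strict inequalities $1<\psi_n<2$ yield $\psi^*\geq 1>0$, so $Z/\psi^*$ is a bounded continuous vector field; because its orbits coincide as sets with those of the $C^1$ field $Z$, it generates a well-defined continuous flow $f^t_*(q)=f^{\tau(q,t)}(q)$, where $\tau(q,t)$ is given implicitly by $\int_0^{\tau(q,t)}\psi^*(f^s q)\,ds=t$. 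In particular $\psi^*$ is continuously differentiable along $\bE^{cu}$, though in general not along $\bE^s$. Guided by Parry's formula, I set
\[
\bE^s_* := \langle X+g^*Z\rangle,\qquad \bE^u_* := \{Y+\xi^u_*(Y)Z : Y\in\bE^u\},
\]
where $\xi^u_*(Y)$ is the uniform limit of $\xi^u_{\psi_n}(Y)$; this limit exists by dominated convergence using the uniform bound $|d\psi_n(Df^{-t}Y)|\leq C\|\psi_n\|_{\mathfrak D}e^{-\gamma t}$. Each $F_{\psi_n}$ is a $C^1$ Anosov flow (by Proposition~\ref{prop:c1s} applied within $\mathfrak D$) whose hyperbolic rates are controlled by $\|\psi_n\|_{\mathfrak D}$; since $\sup_n\|\psi_n\|_{\mathfrak D}<\infty$, these rates are uniform, and the estimates \eqref{eq:Anosov1} are inherited by the limiting continuous splitting $\bE^s_*\oplus\langle Z\rangle\oplus\bE^u_*$.

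The main technical obstacle is to make sense of $Df^t_*$ as a genuine tangent map on $\bE^s_*\oplus\bE^u_*$ when the generating vector field is only $C^0$. The resolution is that the centre-stable and centre-unstable bundles of $F$ are preserved under reparametrisation and remain $C^1$, while the partial regularity of $\psi^*$ along $\bE^{cu}$ --- which is precisely what the $\mathfrak D$-convergence delivers --- is exactly what is needed to differentiate the time change $\tau(\cdot,t)$ in directions transverse to the flow within $\bE^{cu}$. This produces a bona fide tangent map of $f^t_*$ on $\bE^{cu}$, and hence on $\bE^u_*\subset\bE^{cu}$; the analogous argument using $g^*$, the uniform $C^0$-limit of the stable-direction corrections of the approximating flows, handles $\bE^s_*$. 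Once $Df^t_*$ is justified on each subspace of the splitting, the hyperbolic decay/growth rates pass to the limit by continuity and by the uniform-in-$n$ bounds, completing the verification of \eqref{eq:Anosov1}.
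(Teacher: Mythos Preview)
Your overall strategy matches the paper's: show that $\psi_n\to\psi^*$ in $C^0$ with the extra $\mathfrak D$-controlled regularity, build the candidate splitting as the $C^0$ limit of the Parry splittings for $F_{\psi_n}$, and pass the hyperbolic estimates to the limit. Your treatment of the unstable side is also essentially what the paper does: since $\|\cdot\|_{\mathfrak D}$ controls the derivatives of $\psi_n$ along $\bE^{cu}$, these derivatives converge uniformly, hence $Df^t_n|_{\bE^{cu}}$ converges and $Df^t_*$ makes sense on $\bE^u_*\subset\bE^{cu}$.

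The stable side, however, is where your argument has a real gap. You write that ``the analogous argument using $g^*$ \ldots\ handles $\bE^s_*$,'' but there is no analogy available: to differentiate the time change $\tau(\cdot,t)$ in the direction $X+g^*Z$ one would need $X(\psi^*)$, and $\|\cdot\|_{\mathfrak D}$ gives \emph{no} control on $X(\psi_n)$, so neither the existence of $X(\psi^*)$ nor the convergence of $X(\psi_n)$ is at your disposal. Knowing that $g_{\psi_n}\to g^*$ uniformly tells you where the limit stable line sits, not that $Df^t_*$ acts on it, nor how.

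The paper closes this gap with an explicit Lie-bracket computation (its Claim~\ref{claim1}). Writing $X^{(n)}=X+g_{\psi_n}Z_n$ and using $[X,Z]=uX$ together with the identity for $Z(g_{\psi_n})$, one finds that the $X(\psi_n)$ contributions \emph{cancel}, leaving
\[
[X^{(n)},Z_n]=\frac{u}{\psi_n}\,X^{(n)},
\qquad\text{hence}\qquad
Df^t_n X^{(n)}=\exp\!\Bigl(\int_0^t\frac{u}{\psi_n}\circ f^s_n\,ds\Bigr)X^{(n)}.
\]
The right-hand side involves only $u$, $\psi_n$, $f^s_n$ and $X^{(n)}$, all of which converge uniformly, so $Df^t_n|_{\bE^s_n}$ converges uniformly as well. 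Combined with the $\bE^{cu}$ convergence this shows that the full differential $Df^t_n$ converges, and hence that $F^*$ is a genuine $C^1$ Anosov flow. This algebraic cancellation is the essential ingredient your proposal is missing.
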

\begin{proof}
For $n\geq1$, let $F_n:=\{f^{t}_n:t\in\mathbb{R}\}$ be the flow generated by the vector field $Z_n=Z/\psi_n$ and $X^{(n)}$ is the corresponding vector field that spans the stable bundle of $F_n$.
Since
\(
\|\psi_n-\psi_{n+1}\|_{\mathfrak D}\leq \theta^n
\)
for some $\theta\in(0,1)$ then, in particular, $\psi_n$ has a limit $\psi^*$ in the $C^0$ topology that is differentiable along the weak unstable bundle. Let $t\in\mathbb{R}$ we want to prove that, $Df^t_n$, the time $t$ differential of the flow generated by the vector field $Z_n:=Z/\psi_n$ converges uniformly in order to have that the limit flow is $C^1$.

Since the derivatives of $\psi_n$ along the center unstable bundle are converging uniformly then $Df^{t}_n|_{\bE^{cu}}$ converges uniformly. Therefore to prove that $Df^{t}_n$ converges uniformly, it is enough to prove that $Df^{t}_n|_{\bE^s}$ converges uniformly.  
\begin{claim}\label{claim1}We have the following
\[
Df^t_nX^{(n)}=\exp\left(\int_0^t\frac{u}{\psi_n}\circ f^{s}_nds\right)X^{(n)}
\]
where $u:=\frac{d}{dt}|_{t=0}\jac(D{ f}^{t}| \bE^s)$.
\end{claim}
\begin{proof}
By \eqref{eq:xi} we have
\[
X^{(n)}=X+g_{\psi_n}Z_n\quad\text{ where } \quad g_{\psi_n}=\int_0^{\infty}d(\psi\circ f^{t})(X)dt.
\]
Using linearity of the Lie bracket we have
\[
\begin{aligned}
\left[X^{(n)},Z_n\right]&=\left[X,\frac{Z}{\psi_n}\right]+\left[g_{\psi_n}\frac{Z}{\psi_n}, \frac{Z}{\psi_n}\right]\\
&=-\frac{X(\psi_n)}{\psi_n^2}Z+\frac{1}{\psi_n}\left[X,Z\right]-\frac{Z(g_{\psi_n})}{\psi_n^2}Z
\end{aligned}
\]
Since $Df^tX=\jac(D{ f}^{t}| \bE^s)X$ then we have $[X,Z]=uX$, similarly, it is easy to see that $Z(g_{\psi_n})=g_{\psi_n}+uX(\psi_n)$. Substituting this into the above gives
\[
\left[X^{(n)},Z_n\right]=\frac{u}{\psi_n}X^{(n)}
\]
therefore using that $\left[X^{(n)},Z_n\right]=\frac{d}{dt}|_{t=0}Df^t_nX^{(n)}$ then we get the desired equality.
\end{proof}
Since $X^{(n)}$ converges uniformly to a continuous vector field then we have $Df^{t}_nX^{(n)}$ converges uniformly. 
Thus $F^*$ defines a $C^1$ flow that is Anosov.
\end{proof}

The following Lemma gives  regularity of the stable foliation.

\begin{lem}\label{lem:DeX}
Let $\{\psi_n, n>1\}\subset\mathfrak D$  such that $\|\psi_n-\psi_{n+1}\|_{\mathfrak D}\leq \theta^n$ for some $\theta\in(0,1)$ and $X^{(n)}$ denotes the vector field that spans the stable bundle of $Z_n=Z/\psi_n$. Then
for every $s\in\mathbb{R}$, the sequence of differential of the flow given of $X^{(n)}$, $\{De^{sX^{(n)}}\},$ converges uniformly. In particular the $X^{*}$ defines a $C^1$ flow.
\end{lem}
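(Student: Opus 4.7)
The plan is to show that $\{De^{sX^{(n)}}\}$ is uniformly Cauchy on $M$ for each fixed $s \in \mathbb{R}$; since $e^{sX^{(n)}}\to e^{sX^*}$ uniformly by Gronwall, this will identify the limit with $De^{sX^*}$, establishing that $X^*$ generates a $C^1$ flow.

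The first observation is that each $X^{(n)}$ is itself $C^1$ on $M$, even though the $\|\cdot\|_{\mathfrak D}$-norm is strictly weaker than the $C^1$-norm. Indeed, $\psi_n, X(\psi_n)\in C^1$ by condition (1) of $\mathfrak D$, and the bunching \eqref{eq:bunching} makes the integral formula for $g_{\psi_n}$ differentiable in every direction, exactly as in the proof of Proposition \ref{prop:c1s}. Hence $De^{sX^{(n)}}$ is classically defined by the variational equation, and the question is really about uniform convergence, not mere well-definedness.

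The central trick is to exploit the commutation from Claim 1, namely $Df^{t}_n X^{(n)} = \exp(A_n(t,\cdot))\,X^{(n)}$ with $A_n(t,q)=\int_0^t (u/\psi_n)\circ f^\sigma_n\, d\sigma$. Since $u$ is uniformly negative and $\psi_n$ is uniformly bounded, $A_n(t,q)\leq -ct$ for some $c > 0$ independent of $n$ and $q$. This yields the conjugation identity
\[
  e^{sX^{(n)}} = (f^{t}_n)^{-1}\circ \Theta^{(n)}_{s,t}\circ f^{t}_n,
\]
where $\Theta^{(n)}_{s,t}$ denotes the time-$s$ flow of the push-forward vector field $V^{(n)}_t:=(f^{t}_n)_* X^{(n)}=e^{A_n(t,\cdot)}X^{(n)}$. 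The point is that $V^{(n)}_t$ has exponentially small $C^0$-norm in $t$, uniformly in $n$, so $\Theta^{(n)}_{s,t}$ is uniformly close to the identity for large $t$. Its differential is controlled using the fact that the $q$-derivatives of $A_n(t,\cdot)$ that enter the calculation are along $\bE^{cu}$ (because $f^\sigma_n$ preserves $\bE^{cu}$ and $u/\psi_n$ has $\bE^{cu}$-derivatives controlled by $\|\cdot\|_{\mathfrak D}$).

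Combining this with the uniform convergence of $Df^{t}_n$ from Proposition \ref{prop:Anosovweak} proves the Cauchy property: given $\varepsilon>0$, first choose $t$ large enough that $\|D\Theta^{(n)}_{s,t} - \mathrm{Id}\|_{C^0}<\varepsilon$ uniformly in $n$, then use Proposition \ref{prop:Anosovweak} to make the outer factors $Df^{\pm t}_n$ Cauchy in $n$. The main technical obstacle lies in the third step: obtaining a uniform bound on $D\Theta^{(n)}_{s,t}$ that vanishes as $t\to\infty$, despite the fact that $\|DX^{(n)}\|_{C^0}$ is not uniformly controlled in the stable direction by $\|\cdot\|_{\mathfrak D}$. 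The resolution is that the uncontrolled stable-direction derivative of $X^{(n)}$ enters the differential of $\Theta^{(n)}_{s,t}$ only along the $X^{(n)}$-direction itself, contributing a harmless reparametrisation of the $X^{(n)}$-orbit; the genuinely transverse part of $D\Theta^{(n)}_{s,t}$ depends only on the $\bE^{cu}$-derivatives, which are uniformly Cauchy by assumption on $\|\psi_n-\psi_{n+1}\|_{\mathfrak D}$.
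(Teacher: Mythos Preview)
Your conjugation strategy is genuinely different from the paper's argument, but it has a gap. The paper proceeds directly: it evaluates $De^{sX^{(n)}}$ on the spanning set $\{X^{(n)}, Z_n\}\cup\bE^u$ and checks convergence term by term. The key algebraic input is the bracket relation $[X^{(n)},Z_n]=(u/\psi_n)X^{(n)}$ from Claim~\ref{claim1}, which yields the closed formula
\[
De^{sX^{(n)}}Z_n = Z_n + \Big(\int_0^s \tfrac{u}{\psi_n}\circ e^{\tau X^{(n)}}\,d\tau\Big)\,X^{(n)},
\]
manifestly convergent as $n\to\infty$. For $V\in\bE^u$ the paper uses only the uniform convergence of $V(g_{\psi_n})$ and $V(\psi_n)$, which is exactly what $\|\cdot\|_{\mathfrak D}$ controls. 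No conjugation, no competition of exponential rates.

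The difficulty with your route is that the pushed-forward field $V^{(n)}_t=(f^t_n)_*X^{(n)}=e^{B_n}X^{(n)}$ is merely a pointwise rescaling of $X^{(n)}$, so its time-$s$ flow $\Theta^{(n)}_{s,t}$ is itself a reparametrisation $e^{\tau_n(\cdot,s)X^{(n)}}$ with $\tau_n$ small. Consequently $D\Theta^{(n)}_{s,t}\approx\mathrm{Id}+\tau_n\,DX^{(n)}+(\text{terms in }D\tau_n)$, and you are back to needing uniform-in-$n$ control on $DX^{(n)}$ --- precisely the quantity $\|\cdot\|_{\mathfrak D}$ does \emph{not} bound. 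Your proposed ``resolution'' (that the bad stable-direction derivative only contributes a reparametrisation) is therefore circular: the transverse part of $D\Theta^{(n)}_{s,t}$ is governed by the transverse part of $De^{\tau_n X^{(n)}}$, which is the very object under study. Moreover, even if one granted $\|D\Theta^{(n)}_{s,t}-\mathrm{Id}\|\lesssim e^{-ct}$ uniformly in $n$, after conjugation the error is sandwiched between $Df^{-t}_n$ (operator norm $\sim e^{ct}$, realised on $\bE^s$) and $Df^{t}_n$ (operator norm $\sim e^{\lambda_u t}$, realised on $\bE^u$); the bunching \eqref{eq:bunching} gives $c>\lambda_u$ but certainly not $c>c+\lambda_u$, so the product need not decay. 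The paper's direct frame computation sidesteps this entirely.
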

\begin{proof}
To prove the lemma, it is enough to choose a set of continuous vector field and evaluate the differential on that set. It is easy to see that $De^{sX^{(n)}}X^{(n)}=X^{(n)}$ and since for $V\in\bE^u$ the functions $V(g_{\psi_n}) V(\psi_n)$ converge uniformly to  continuous functions then if $V\in\bE^u$ is a continuous vector field then $De^{sX^{(n)}}V$ converges to a continuous vector field. In order to conclude the proof, we only need to evaluate $De^{sX^{(n)}}Z_n$. As in the proof of Claim \ref{claim1}, we have
$[X^{(n)}, Z_n]=\frac{u}{\psi_n}Z_n$ therefore we have
\[
De^{sX^{(n)}}Z_n=Z_n+\int_0^s\frac{u}{\psi_n}\circ e^{\tau X^{(n)}}d\tau X^{(n)}
\]
Then we have $De^{sX^{(n)}}Z_n$ is converging uniformly. Thus $De^{sX^{(n)}}$ converges uniformly and $X^*$ defines a $C^1$ flow.
\end{proof}

\begin{definition}\normalfont[Special synchronisation]\label{def:sync}

The flow $F$ is said to be \textit{specially synchronisable} if:
\begin{enumerate}
\item there exists a $C^1$ differential $1$-form $\alpha$ such that 
\(X\in\ker(\alpha), \alpha(Z)\equiv1\) 
\item there exists a sequence of functions $\{\psi_n, n\geq1\}\subset\mathfrak D$ and $\theta\in(0,1)$ such that $\|\psi_n-\psi_{n+1}\|_{\mathfrak D}\leq \theta^n$

\item there exist an admissible section $\mathcal S$ for the flow $F$, a sequence of $T_n\to\infty$ and $\tau_n\to0$ with :
\begin{equation}\label{eq:tau}
\|Df^{T_n}|_{\bE^s}\|\leq \tau_n^2
\end{equation}
and
\begin{equation}\label{eq:synceq}
\|i_{X_{\psi_n}}\circ d\alpha^{(n)}_{f^{T_n}\mathcal S^{(\tau_n)}}\|\leq\theta^n 
\quad
\text{where} 
\quad
\alpha^{(n)}=\psi_n\alpha-h_{\psi_n}\omega.
\end{equation}
\end{enumerate}
\end{definition}
\begin{rem}\normalfont
We observe that the first item in the above definition is trivial as we will see in the next sections. The most important part of the definition is given by \eqref{eq:synceq}.
\end{rem}

In Section \ref{sec:proofmain}, we will prove Theorem \ref{thm:main} that states that special synchronisability implies joint integrablity of the limit Anosov flow.

\subsection{Integrability and global cross section}\label{sec:Int}
The problem of integrability of $C^1$ sub-bundles has been resolved by the well known Frobenius Theorem  which states that a smooth $1$-form $\eta$ is integrable if and only if it satisfies the involutivity condition
\begin{equation}\label{eq:inv}
\eta\wedge d\eta=0
\end{equation}
where $d\eta$ is the exterior derivative of $\eta$. In the case of a continuous $1$-form, the problem of integrability is more subtile in the sense that the exterior derivative $d\eta$ might not exist. However in the context of smooth dynamical systems most invariant $1$-forms are typically just H\"older continuous nevertheless there are dynamical methods to integrate them. For instance, the well known stable manifold theorem solve the problem of integrability of certain H\"older continuous $1$-forms. In \cite{LTW16}, we give a continuous version of the Frobenius Theorem that gives an alternative proof of the stable manifold Theorem. The ideas of integrability in \cite{LTW16} are the main inspiration of this work.

On the other hand, we notice that the existence of exterior derivative is a weaker condition than having smoothness of the $1$-form; indeed if $\psi$ is a $C^1$ function then the $1$-form $d\psi$ is just continuous however has an exterior derivative that is identically zero. Along the same lines, Hartman \cite{Hart82} defines the notion exterior derivative of a continuous $1$-form via Stokes Theorem.

\begin{definition}
A continuous $1$-form $\eta$ has  exterior derivative if there exists a continuous $2$-form $\beta$ such that 
$$
\int_J\eta=\int_D\beta
$$
for every Jordan curve $J$ that bounds a disk $D$. In this case we write $d\eta=\beta$.
\end{definition}
With this definition, Hartman \cite{Hart82} gives a similar statement to Frobenius Theorem:
Let $\eta$ be a continuous $1$-form that has a continuous exterior derivative. If $\eta$ is involutive then it is integrable (\cite[Theorem 3.1]{Hart82}). Weaker forms of Frobenius Theorem has been proved by the author in \cite{LTW16} that in particular gives an alternative proof of the well known stable manifold theorem for hyperbolic systems. However for the purpose of this paper, we will apply the above version of Hartman.

The case of integrability of $\bE^s\oplus\bE^u$ is very special, Plante \cite{Pla72} proves that following
\begin{prop}
Let $F=\{f^t, t\in\mathbb{R}\}:M\to M$ be an Anosov flow such that for every $q\in M$, there exists a $C^1$ local submanifold tangent to $\bE^s\oplus\bE^u$. Then corresponding invariant $1$-form $\eta$ that is defined in \eqref{eq:in1form} has an exterior derivative that is zero, i.e
\[
d\eta=0.
\]
\end{prop}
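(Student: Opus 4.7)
The strategy I would adopt is to show that $\eta$ is \emph{locally exact}: around each point I will produce a $\cC^{1}$ function $\tau$ with $\eta = \extd \tau$. Hartman's notion of exterior derivative is additive under subdivision (integrals over interior edges of a fine triangulation cancel in pairs), so local exactness immediately upgrades to $\extd \eta \equiv 0$ globally in Hartman's sense.

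Fix $p \in \cM$. The hypothesis supplies a $\cC^{1}$ local submanifold $L_{0}$ through $p$ tangent to $\bE^{s}\oplus\bE^{u}$. Consider the map
\[
\Phi: (-\epsilon,\epsilon)\times L_{0} \to \cM,\qquad \Phi(t,x) = f^{t}(x),
\]
which is $\cC^{1}$ because both $L_{0}$ and the flow are. Its differential at $(0,x)$ sends $(s,v)\mapsto sZ(x)+v$, an isomorphism because the flow direction is transverse to $\bE^{s}\oplus\bE^{u}$. By the inverse function theorem, $\Phi$ is a $\cC^{1}$ diffeomorphism onto a neighbourhood $U$ of $p$. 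Define $\tau: U \to \bR$ by $\tau(\Phi(t,x)) = t$; it is $\cC^{1}$ and its level sets are precisely the flow saturates $f^{t}(L_{0})$.

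To conclude I would compare $\extd\tau$ with $\eta$. By construction $\extd\tau(Z)=1$, matching $\eta(Z)=1$. Moreover, by flow-invariance of $\bE^{s}\oplus\bE^{u}$, each level set $f^{t}(L_{0})$ is a $\cC^{1}$ submanifold whose tangent space equals $\bE^{s}\oplus\bE^{u}$ pointwise, since $\dm{f^{t}}\bE^{s}\oplus\bE^{u} = \bE^{s}\oplus\bE^{u}$. Hence $\ker(\extd\tau) = \bE^{s}\oplus\bE^{u} = \ker(\eta)$. As the two $1$-forms agree on $Z$ and on the codimension-one bundle $\bE^{s}\oplus\bE^{u}$, they are equal on $U$. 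Thus for any Jordan curve $J\subset U$ bounding a disk one has $\int_{J}\eta = \int_{J}\extd\tau = 0$, and triangulating an arbitrary disk in $\cM$ so finely that each triangle lies in one such flow box upgrades this to $\int_{J}\eta = 0$ for every Jordan curve bounding a disk. By definition, $\extd\eta$ exists and is identically zero.

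The one place where I expect care is the passage from local vanishing to Hartman's global definition: one must check that an embedded topological disk can be covered by finitely many flow boxes of the above type, triangulated accordingly, with matching orientations so that integrals over interior edges cancel. This is a standard compactness and orientation argument, using only continuity of $\eta$ and the $\cC^{1}$ regularity of $\tau$ to apply classical Stokes on each piece. Note that no uniqueness of the $\bE^{s}\oplus\bE^{u}$-submanifold through a point is needed: the existence of a single $L_{0}$ at $p$ already generates, via the flow, a consistent family of $\bE^{s}\oplus\bE^{u}$-tangent submanifolds filling $U$, which is exactly what the argument uses.
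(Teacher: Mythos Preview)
Your argument is correct. The paper does not actually prove this proposition; it simply attributes the result to Plante~\cite{Pla72} and states it without proof, so there is no in-paper argument to compare against. Your flow-box construction---showing that $\eta$ equals $\extd\tau$ locally because both annihilate $\bE^{s}\oplus\bE^{u}$ and take the value $1$ on $Z$, then invoking a subdivision argument to globalise---is exactly the classical route and is essentially what underlies Plante's original proof. The observation that a single integral submanifold $L_{0}$ through $p$, pushed by the flow, suffices to foliate a neighbourhood by $\bE^{s}\oplus\bE^{u}$-tangent leaves (using only the $Df^{t}$-invariance of $\bE^{s}\oplus\bE^{u}$) is a nice touch; it makes transparent that no uniqueness or compatibility of the hypothesised submanifolds is needed.
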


\begin{definition}
A flow $F=\{f^t, t\in\mathbb{R}\}$ has a global cross section if there exists a $C^1$ closed submanifold of codimension-one such 
\[
\forall q\in M, \exists t_q\in\mathbb{R}: f^{t_q}(q)\in N.
\]
\end{definition}
The following is also proved by Plante
\begin{thm}[Plante {\cite[Theorem 3.7]{Pla72}}]\label{prop:Plante}
Let $F=\{f^t, t\in\mathbb{R}\}:M\to M$ be a codimension-one Anosov flow such that for every $q\in M$, there exists a $C^1$ local submanifold tangent to $\bE^s\oplus\bE^u$. Then $F$ admits a global cross section.
\end{thm}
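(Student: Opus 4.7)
The plan is to use the preceding proposition to produce a closed, nowhere vanishing $1$-form on $M$ and then invoke Tischler's fibration theorem. Let $\eta$ be the invariant $1$-form from \eqref{eq:in1form}, characterised by $\ker\eta=\bE^{s}\oplus\bE^{u}$ and $\eta(Z)\equiv 1$. Under the local integrability hypothesis, the preceding proposition gives $d\eta=0$ in the Hartman sense, and the normalisation $\eta(Z)=1$ shows $\eta$ vanishes nowhere. The aim is to produce a smooth closed $1$-form $\tilde\eta$, $C^{0}$-close to $\eta$, with $\tilde\eta(Z)>0$ and with integer periods; a smooth primitive $M\to S^{1}$ will then yield the global cross-section.

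First I would regularise $\eta$. The Hartman identity $\int_{\partial D}\eta=0$ on small disks implies that on any simply connected chart $\eta=d\phi$ for some continuous $\phi$, and the differences of such primitives on overlaps are locally constant. A de Rham-style smoothing (mollify each local primitive, assemble via a partition of unity, correct by the coboundary determined by the locally constant cocycle) yields a smooth closed $1$-form $\bar\eta$ in the same de Rham class and uniformly $C^{0}$-close to $\eta$. Choosing the approximation tight enough preserves $\bar\eta(Z)>0$ everywhere. This regularisation is the step I expect to be most delicate, since the only regularity available on $\eta$ is the Hartman integral identity and one must carefully patch continuous local primitives into a smooth cohomologous representative while retaining $C^{0}$-closeness.

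Next I would apply Tischler's density argument. Pick closed smooth $1$-forms $\omega_{1},\dots,\omega_{k}$ whose classes form a $\mathbb{Q}$-basis of $H^{1}(M;\mathbb{Q})$. Because the condition $\alpha(Z)>0$ is $C^{0}$-open on the space of closed $1$-forms, one can find rationals $r_{1},\dots,r_{k}$ with $\sum|r_{i}|$ arbitrarily small such that $\tilde\eta:=\bar\eta+\sum_{i}r_{i}\omega_{i}$ still satisfies $\tilde\eta(Z)>0$ pointwise and now has $[\tilde\eta]\in H^{1}(M;\mathbb{Q})$. A positive rescaling of $\tilde\eta$ has integer periods and is therefore the pullback of $d\theta$ under a smooth submersion $\pi:M\to S^{1}$.

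Finally, $\pi$ is a submersion from a compact manifold onto $S^{1}$, so by Ehresmann it is a smooth fibre bundle. Fix any regular value $x_{0}\in S^{1}$ and let $N=\pi^{-1}(x_{0})$; this is a closed codimension-one smooth submanifold of $M$. The chain rule gives $\frac{d}{dt}\pi(f^{t}q)=c\cdot\tilde\eta(Z)(f^{t}q)$ for the positive rescaling constant $c$, hence the derivative is positive at every point, so $\pi\circ f^{t}$ winds monotonically around $S^{1}$ and every orbit of $F$ meets $N$ in every sufficiently long time interval. The submanifold $N$ is the sought global cross-section.
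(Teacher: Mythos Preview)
The paper does not supply its own proof of this theorem; it is quoted as Plante's result \cite[Theorem~3.7]{Pla72} and used as a black box. Your argument is essentially the one Plante gives: the integrability hypothesis forces the canonical invariant $1$-form $\eta$ to be closed (the preceding proposition, also due to Plante), and then one runs a Tischler-type approximation to produce a fibration over $S^{1}$ whose fibre is the desired cross-section. So your approach matches the original source rather than anything novel in the present paper.

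Two small remarks. First, in the Tischler step you write ``find rationals $r_{1},\dots,r_{k}$''; what you actually need are small \emph{real} numbers $r_{i}$ chosen so that the periods of $\bar\eta+\sum_{i} r_{i}\omega_{i}$ become rational --- the $r_{i}$ themselves will typically be irrational. Second, you correctly flagged the regularisation of $\eta$ as the delicate point. A clean way to organise it: the periods of $\eta$ define a class in $H^{1}(M;\bR)$; pick any smooth closed representative $\alpha$, so that $\eta-\alpha$ has vanishing periods and hence a global continuous primitive $g$ on $M$; mollify $g$ to a smooth $\tilde g$ and set $\bar\eta=\alpha+d\tilde g$. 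Since $dg=\eta-\alpha$ is continuous, mollification in charts patched by a partition of unity (using $\sum_{i} d\rho_{i}=0$) gives $d\tilde g\to dg$ in $C^{0}$, so $\bar\eta$ is smooth, closed, cohomologous to $\eta$, and $C^{0}$-close to it. This avoids juggling the \v{C}ech cocycle directly. Finally, note that the codimension-one hypothesis plays no role in this step; Plante's cross-section argument works for any Anosov flow with integrable $\bE^{s}\oplus\bE^{u}$.
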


\subsection{Proof of Theorem \ref{thm:main}}\label{sec:proofmain}

Let $\{\psi_n, n>1\}\subset\mathfrak D$  be given by the definition of special synchronisability and $\psi^*$ be a $C^0$ limit of the sequence $\{\psi_n, n>1\}$. By Proposition \ref{prop:Anosovweak}, the vector field $Z^*=Z/\psi^*$ defines an Anosov flow $F^*$; let $\eta^*$ be the corresponding invariant $1$-form given in \eqref{eq:in1form}. The proof of Theorem \ref{thm:main} consists of proving that $\eta^*$ which concludes the proof of Theorem \ref{thm:main}. For notational purposes the flow generated by $Z_n:=Z/\psi_n$ is denoted by $F_n:=\{f^t_n, t\in\mathbb{R}\}$.

Let $\phi^\pm: M\to \mathbb{R}$ be the function defined by 
\[
\phi^{\pm}(q):=\min\{t>0: f^{\pm t}(q)\in\mathcal S\}
\]
This corresponds to the first return time in forward and backward time for $q$ to the section $\mathcal S$ with respect to the flow $F$. To simplify the notation, for $q\in M$, we write $\mathcal D^{\pm}(q)$ to mean the piece of section in which $q$ returns for the first time in forward and backward time, i.e.
\[
f^{\phi^{\pm}(q)}(q)\in\mathcal D^{\pm}(q).
\]
We recall that since the pieces of sections are $C^1$ then $\phi^{\pm}$ define piecewise $C^1$ functions and for $n>1$, we write
\[
\phi^{\pm}_n:=\phi^{\pm}\circ f^{-T_n}.
\]
Notice that
 the functions $\phi^{\pm}_n$ defines the first return times in forward and backward times with respect to the section $f^{T_n}\mathcal S$. Moreover since the local sections are tangent to the stable bundle of $F$ which is spanned by the vector field $X$ then we have
 \begin{equation}\label{eq:Xphi}
 X(\phi^{\pm}_n)_q=0
 \end{equation}
 where $q$ is a point of differentiability of $\phi^{\pm}_n$. Similarly, by the definition of Anosov flow, $Df^{-T_n}$ does not expand vectors in $\bE^{cu}$; there exists a constant $\Cl{con:Ano1}>0$ that only depends on $\mathcal S$ such that 
 \begin{equation}\label{eq:Yphi}
 |Y(\phi^{\pm}_n)_q|\leq\Cr{con:Ano1}\|Y\|
 \end{equation}
where $q$ is a point of differentiability of $\phi^{\pm}_n$ and $Y\in\bE^{cu}$.
We notice that the non-differentiable point of $\phi^\pm_n$ is given by the set $f^{T_n}\partial\mathcal S$; the iterates of the boundaries of the section under $f^{T_n}$.

For $n>1$, let $b_n^\pm: M\to\mathbb{R}$ be the function defined as follows: if $q\in M$ we consider $f^{T_n}\mathcal D^+(q)$ and $f^{T_n}\mathcal D^-(q)$ where be the piece of section whose orbit of $q$ returns for the first time in forward and backward time respectively. We define 
$$
W^{s,\pm}_{loc, n}(q):=\{q'\in W^{s}_n(q): f^{T_n}\mathcal D^\pm(q)=f^{T_n}\mathcal D^\pm(q')\}
$$
where $W^{s}_n(q)$ is the local stable manifold through $q$ under the flow $F_n$.
This consists of points on the stable manifolds of $q$ with respect to $F_n$ that share the the same first return section.
$b^\pm_n(q)$ is the length of the piece of stable $W^{s,\pm}_{loc, n}(q)$.

We define   $\overline\phi_n^\pm: M\to \mathbb{R}$ by
\[
\overline\phi_n^\pm(q)=\frac{1}{b_n^\pm}\int^{b_n^\pm}_{0}\phi^{\pm}_n\circ\gamma(t)dt
\]
where $\gamma$ is a unit speed parametrisation of the piece of stable manifold $W^{s,\pm}_{loc, n}(q)$.
We observe that, from its definition, $\overline\phi^{\pm}_n$ is piecewise constant along stable manifolds of $F_n$ and defines a piecewise $C^1$ function. 

\begin{lem}\label{lem:retu}
For every $q\in M$ and $n$ large enough  we have 
\[
f_n^{\overline\phi_n^\pm(q)}(q)\in\mathcal U^{(\tau_n)}.
\]
\end{lem}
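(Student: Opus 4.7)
My plan is to find an auxiliary point $q^\ast\in W^{s,\pm}_{loc,n}(q)$ for which the conclusion holds in an exact form, and then to transfer this to $q$ itself using the $F_n$-stable contraction together with the time-change relating $F$ and $F_n$. The averaging defining $\overline\phi_n^\pm$ is precisely engineered so that such a $q^\ast$ exists, while the special synchronisation data provides the quantitative estimates needed to conclude.

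First I would apply the integral mean value theorem to the piecewise $C^1$ function $\phi_n^\pm\circ\gamma$ on $[0,b_n^\pm]$, producing some $s^\ast$ with $\phi_n^\pm(\gamma(s^\ast))=\overline\phi_n^\pm(q)$. Setting $q^\ast:=\gamma(s^\ast)\in W^{s,\pm}_{loc,n}(q)$ immediately gives, by the very definition of $\phi_n^\pm$, that
\[
f^{\overline\phi_n^\pm(q)}(q^\ast)\in f^{T_n}\mathcal D^\pm(q)\subset f^{T_n}\mathcal S.
\]

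Next I would quantify the oscillation of $\phi_n^\pm$ along $\gamma$. Using $X(\phi_n^\pm)=0$ from \eqref{eq:Xphi}, the identity $Z(\phi_n^\pm)=\mp 1$ coming from the fact that $\phi_n^\pm$ is an $F$-return time, and the formula $X^{(n)}=X+h_{\psi_n}Z$ for the stable vector field of $F_n$ recalled in Proposition~\ref{prop:c1s}, a direct chain-rule computation yields $X^{(n)}(\phi_n^\pm)=\mp h_{\psi_n}$. The $\mathfrak D$-bound on $\psi_n$ then controls $|\overline\phi_n^\pm(q)-\phi_n^\pm(q)|$ by a constant times $b_n^\pm$, and in particular $\|q-q^\ast\|\leq b_n^\pm$.

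The final step is to estimate the two discrepancies separating $f_n^{\overline\phi_n^\pm(q)}(q)$ from the section: $(i)$ the distance $d(f_n^{\overline\phi_n^\pm(q)}(q),f_n^{\overline\phi_n^\pm(q)}(q^\ast))$, bounded via the $F_n$-stable contraction from Proposition~\ref{prop:Anosovweak} applied to the initial distance $b_n^\pm$ and combined with \eqref{eq:tau}; and $(ii)$ the $F$-flow-direction offset between $f_n^{\overline\phi_n^\pm(q)}(q^\ast)=f^{T(\overline\phi_n^\pm(q),q^\ast)}(q^\ast)$ and $f^{\overline\phi_n^\pm(q)}(q^\ast)\in f^{T_n}\mathcal S$, where $T(t,q^\ast)=\int_0^t 1/\psi_n(f_n^\sigma q^\ast)\,d\sigma$ is the time change. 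The main obstacle is bounding the offset in $(ii)$ by $\tau_n$, and this is where the special synchronisation condition \eqref{eq:synceq} enters: since the $1$-form $\alpha^{(n)}=\psi_n\alpha-h_{\psi_n}\omega$ satisfies $\alpha^{(n)}(Z_{\psi_n})\equiv 1$ it serves as a "time form" for $F_n$, and the smallness $\|i_{X_{\psi_n}}d\alpha^{(n)}|_{f^{T_n}\mathcal S^{(\tau_n)}}\|\leq\theta^n$ should yield, via Stokes' theorem on a suitable flow-box rectangle built from the $F$- and $F_n$-orbits of $q^\ast$ and the stable arc $\gamma$, the cancellation required to absorb the accumulated offset into $\tau_n$ for $n$ sufficiently large.
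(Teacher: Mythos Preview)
Your first two steps track the paper's argument closely: you control the oscillation of $\phi_n^\pm$ along the $F_n$-stable arc via $X^{(n)}(\phi_n^\pm)=h_{\psi_n}\,Z(\phi_n^\pm)$ (since $X(\phi_n^\pm)=0$), and you know $b_n^\pm$ is small. The paper does exactly this, obtaining for every $q'\in W^{s,\pm}_{loc,n}(q)$
\[
|\overline\phi_n^\pm(q)-\phi_n^\pm(q')|\le \|X^{(n)}(\phi_n^\pm)\|\,b_n^\pm\le C\,\|Df^{T_n}|_{\bE^s}\|\le C\tau_n^2,
\]
and then simply takes $q'=q$: since $f^{\phi_n^\pm(q)}(q)$ lies on the section $f^{T_n}\mathcal S$ and the time offset is below $\tau_n$ for $n$ large, the point $f^{\overline\phi_n^\pm(q)}(q)$ lies in the $\tau_n$-flow box. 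That is the whole proof.

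The divergence is in your ``final step''. You regard the $F$ versus $F_n$ time discrepancy as the \emph{main obstacle} and propose to resolve it with the special synchronisation hypothesis \eqref{eq:synceq} and a Stokes argument on a flow-box rectangle. This is both unnecessary and problematic. It is unnecessary because the paper never invokes \eqref{eq:synceq} here; only \eqref{eq:tau} and the elementary width bound on $b_n^\pm$ are used, and the conclusion follows from the definition of a flow box. It is problematic because your proposed use of \eqref{eq:synceq} is circular in spirit: the role of this lemma in the proof of Theorem~\ref{thm:main} is precisely to certify that the pulled-back points land in $f^{T_n}\mathcal S^{(\tau_n)}$, which is the region on which \eqref{eq:synceq} gives information; you cannot appeal to \eqref{eq:synceq} before you know you are in that region. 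Moreover the Stokes step you sketch is left entirely heuristic (``should yield \ldots\ the cancellation required''), with no indication of why the boundary terms or the area of the rectangle are of the right order.

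In summary: keep your oscillation estimate, set $q'=q$, and stop. The auxiliary point $q^\ast$, the stable-contraction estimate in $(i)$, and the entire mechanism in $(ii)$ are not needed.
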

\begin{proof}
For $q'\in W^{s,\pm}_{loc, n}(q)$ we have
\[
\left|\overline\phi_n^\pm(q)-\phi_n^\pm(q')\right|=\left|\frac{1}{b_n^\pm}\int_{W^{s,\pm}_{loc, n}(q)}\phi^{\pm}_n\circ\gamma(t)-\phi^{\pm}_n(q')dt\right|\leq \|X^{(n)}(\phi^{\pm}_n)\|b_n^{\pm}
\]
where the last inequality uses the Mean Value Theorem. Using \eqref{eq:Xphi}, \eqref{eq:Yphi} and the formula of $X^{(n)}$, we have $\|X^{(n)}(\phi^{\pm}_n)\|\leq C$ where $C>0$ is a constant that is estimated by  $C\leq\sup_{n}\|\psi_n\|_{\mathfrak D}<\infty$. Since the width of every piece of section $f^{T_n}\mathcal D_i$ is bounded by $\|Df^{T_n}|_{\bE^s}\|$ then using the formula of $X^{(n)}$ we have $b_n^\pm\leq C\|Df^{T_n}|_{\bE^s}\|$. Therefore using \eqref{eq:tau} in the definition of special synchronisability we have
\[
\left|\overline\phi_n^\pm(q)-\phi_n^\pm(q')\right|\leq C\tau_n^2.
\]
Since the constant $C$ does not depend on $n$ then for $n$ large enough we have $\left|\overline\phi_n^\pm(q)-\phi_n^\pm(q')\right|<\tau_n$ which gives the lemma.
\end{proof}
From the definition of special synchronisability, we have $ \|i_{X^{(n)}}\circ d\alpha^{(n)}_{f^{T_n}\mathcal U^{(\tau_n)}}\|\leq\theta^n
$, therefore we can choose a sequence  $t_n\to\infty$ such that  
\begin{equation}\label{eq:tn}
\|Df^{-t_n}_n\|\cdot \|i_{X^{(n)}}\circ d\alpha^{(n)}_{f^{T_n}\mathcal U^{(\tau_n)}}\|\leq\theta^{n/2}.
\end{equation}
We let  $\eta^{(n)}$ be the $1$-form defined by 
\[
\eta^{(n)}:=\varphi_n^+\cdot(f_n^{-t_n-\overline\phi^+_n})^*\alpha^{(n)}+\varphi_n^-\cdot(f_n^{-t_n-\overline\phi^-_n})^*\alpha^{(n)}.
\]
where
\[
\varphi_n^\pm:=\frac{\overline\phi_n^\pm}{\overline\phi_n^++\overline\phi_n^-}.
\]
We recall that since the functions $\overline\phi^{\pm}$ and $\overline\phi^{\pm}_n$ are piecewise differentiable then the $1$-form $\eta^{(n)}$ is also piecewise differentiable.

\begin{proof}[Proof of Theorem \ref{thm:main}]

First of all we recall that from the definition of special synchronisation, $\psi_n\to\psi^{*}$ in $\mathfrak D$. In particular, by the definition of $\|\psi_n-\psi^*\|_{\mathfrak D}$ if $\{Y_1,\cdots, Y_m\}$ span $\bE^u_q$ for some $q$ and 
\[
Y_i^{(n)}:=Y_i+\frac{1}{\psi_n}\int_0^{\infty}d(\psi_n\circ f^{-t})(Y_i)dtZ,\quad i=1,\cdots m
\]
spans the unstable bundle of $Z_n$ and converges to the unstable bundle of $Z_{*}$. Similalry $X^{(n)}$ converges to $X^*$ the stable bundle of $Z_{*}$.
Since $X^{(n)}\in\ker(\eta^{(n)})$ and $\eta^{(n)}(Z_n)\equiv1$ then using Cartan formula we have
\begin{equation}\label{eq:inv1}
d\eta^{(n)}(X^{(n)},Z_n)_q\equiv0
\end{equation}
where $q\in M$ is a point where $\eta^{(n)}$ is differentiable. It is easy to see that 
\begin{equation}\label{eq:inv0}
\begin{aligned}
d\eta^{(n)}=&d\varphi_n^+\wedge (f_n^{-t_n-\phi^+})^*\alpha^{(n)}+\varphi_n^+\left(d\overline\phi^+_n\wedge (f_n^{-t_n-\phi^+})^*\mathfrak{L}_{Z_n}\alpha^{(n)}+ \overline\phi^+_n(f_n^{-t_n-\phi^+})^*d\alpha^{(n)}\right)\\
&+d\varphi_n^-\wedge (f_n^{-t_n-\phi^-})^*\alpha^{(n)}+\varphi_n^-\left(d\overline\phi^-_n\wedge (f_n^{-t_n-\phi^-})^*\mathfrak{L}_{Z_n}\alpha^{(n)}+ \overline\phi^-_n(f_n^{-t_n-\phi^-})^*d\alpha^{(n)}\right).
\end{aligned}
\end{equation}
We will estimate each term separately.  For the first  and fourth we have
\begin{equation}\label{eq:inv2}
d\varphi_n^\pm\wedge (f_n^{-t_n-\phi^\pm})^*\alpha^{(n)}(X^{(n)},Y^{(n)}_i)=X^{(n)}(\varphi_n^\pm)\cdot(f_n^{-t_n-\phi^\pm})^*\alpha^{(n)}(Y^{(n)}_i)=0
\end{equation}
where the last equality uses $X(\varphi_n^+)=0$.
For the second and fifth term we have
\begin{equation}\label{eq:inv3}
d\overline\phi^\pm_n\circ f^{-t_n}\wedge (f_n^{-t_n-\phi^\pm})^*\mathfrak{L}_{Z_n}\alpha^{(n)}(X^{(n)},Y_i^{(n)})=X^{(n)}(\overline\phi^\pm_n\circ f^{-t_n})(f_n^{-t_n-\phi^\pm})^*\mathfrak{L}_{Z_n}\alpha^{(n)}(Y_i^{(n)})=0
\end{equation}
where the last equality uses that, from its definition, we have $X^{(n)}(\overline\phi^\pm_n\circ f^{-t_n})=0$. For the third an sixth term, we use the definition of special synchronisation and Lemma \ref{lem:retu} to have 

\begin{equation}\label{eq:inv4}
\left|(f_n^{-t_n-\phi^\pm})^*d\alpha^{(n)}(X^{(n)},Y^{(n)}_i)\right|\leq C\|Df^{-t_n}_n\|\cdot \|i_{X^{(n)}}\circ d\alpha^{(n)}_{f^{T_n}\mathcal U^{(\tau_n)}}\|
\end{equation}
Substituting , \eqref{eq:inv2}, \eqref{eq:inv3} and  \eqref{eq:inv4} into \eqref{eq:inv0} and using \eqref{eq:inv1} and \eqref{eq:tn} we have
\begin{equation}
\|i_{X^{(n)}}\circ d\eta^{(n)}\|\leq C\theta^{n/2}
\end{equation}
The rest of the proof consist of constructing a submanifold that is tangent to $\ker(\eta^*)$ through every point. To this end, we take an unstable manifold $\mathcal L_*$ of $Z_*$ that is parametrized by a $C^1$ map $\phi: (-\varepsilon,\varepsilon)^m\to M$ for some $\varepsilon>0$. We consider the following object $\mathcal W_n: (-\varepsilon,\varepsilon)^{m+1}\to M$:
\[
\mathcal W_n(s,u_1,\cdots, u_m):=e^{sX^{(n)}}\circ \phi(u_1,\cdots,u_m).
\]
\begin{claim} For $s\in(-\varepsilon,\varepsilon)$, 
if $\gamma_s:(-\varepsilon,\varepsilon)\to M$ is the curve defined by 
$$
\gamma_{s,i}(u):=e^{sX^{(n)}}\circ \phi(u_1,\cdots, u_{i-1},u, u_{i+1},\cdots,,u_m)
$$
then 
\[
\lim_{n\to\infty}\int_{\gamma_{s,i}}\eta^*=0.
\]
\end{claim}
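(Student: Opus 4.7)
The plan is to use Stokes' theorem, in the continuous form of Hartman recalled in Section~\ref{sec:Int}, on the parametrised rectangle
$$
R_{s,i}(\sigma,u) := e^{\sigma X^{(n)}} \circ \phi(u_1,\ldots,u_{i-1},u,u_{i+1},\ldots,u_m),\quad (\sigma,u)\in[0,s]\times[-\varepsilon,\varepsilon].
$$
Its boundary consists of $\gamma_{s,i}$ (at $\sigma=s$), the unstable coordinate curve $\gamma_{0,i}\subset\mathcal L_*$ (at $\sigma=0$), and two arcs along $X^{(n)}$-orbits (at $u=\pm\varepsilon$). Rather than integrating $\eta^*$ directly, my strategy is first to integrate the auxiliary $1$-form $\eta^{(n)}$: its kernel contains $X^{(n)}$, so the two stable-arc contributions vanish exactly, and the key a priori bound $\|i_{X^{(n)}}\circ d\eta^{(n)}\|\le C\theta^{n/2}$ already established in the main proof controls the bulk term. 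Stokes then gives
$$
\int_{\gamma_{s,i}}\eta^{(n)} - \int_{\gamma_{0,i}}\eta^{(n)} = \int_0^s\!\!\int_{-\varepsilon}^{\varepsilon} d\eta^{(n)}\bigl(X^{(n)},\partial_u R_{s,i}\bigr)\,du\,d\sigma = O(|s|\,\varepsilon\,\theta^{n/2}),
$$
using that $\|\partial_u R_{s,i}\|$ is uniformly bounded in $n$ by Lemma~\ref{lem:DeX}.

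For the $\gamma_{0,i}$ term I would use that $\gamma_{0,i}$ lies in the unstable leaf $\mathcal L_*$ of $F^*$, so $\eta^*$ vanishes on it and $\int_{\gamma_{0,i}}\eta^*=0$. It then suffices to show $\|\eta^{(n)}-\eta^*\|_{C^0}\to 0$, since
$$
\int_{\gamma_{s,i}}\eta^* = \int_{\gamma_{s,i}}(\eta^*-\eta^{(n)}) + \int_{\gamma_{s,i}}\eta^{(n)}
$$
then reduces the claim to the Stokes estimate above plus a length-times-sup-norm error that tends to $0$. This uniform convergence follows by unpacking the construction of $\eta^{(n)}$ as a convex combination of $(f_n^{-t_n-\overline\phi_n^\pm})^*\alpha^{(n)}$: since $t_n\to\infty$, the dynamical approximation~\eqref{eq:conv} (applied to the synchronized flows $F_n$) sends these pullbacks to the corresponding invariant $1$-forms, while $\psi_n\to\psi^*$ in $\mathfrak D$ together with Proposition~\ref{prop:Anosovweak} and Lemma~\ref{lem:DeX} yields convergence of the invariant $1$-forms themselves to $\eta^*$.

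The hard part will be the low regularity: $\eta^{(n)}$ is only piecewise $C^1$, with jumps in its derivative along the preimages of $f^{T_n}\partial\mathcal S$ under the piecewise $C^1$ return-time functions $\overline\phi_n^\pm$. Stokes' theorem must therefore be applied by partitioning $R_{s,i}$ into finitely many cells on which $\eta^{(n)}$ is $C^1$; because $\eta^{(n)}$ itself is globally continuous (only $d\eta^{(n)}$ jumps), the line integrals along the interior partitioning curves cancel pairwise and only the outer boundary survives. One also needs to verify that $R_{s,i}$ is transverse to these singularity hypersurfaces for a generic choice of $s$, that only finitely many of them meet the compact rectangle, and that each resulting cell is regular enough for Hartman's version of Stokes; these are standard transversality considerations but have to be carried out carefully before the $O(\theta^{n/2})$ bound can be read off.
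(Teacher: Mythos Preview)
Your proposal is correct and follows essentially the same route as the paper: form the rectangle bounded by $\gamma_{0,i}$, $\gamma_{s,i}$ and two $X^{(n)}$-arcs, partition it into finitely many pieces on which $\eta^{(n)}$ is $C^1$, apply Stokes on each piece, and use the bound $\|i_{X^{(n)}}\circ d\eta^{(n)}\|\le C\theta^{n/2}$ together with $\eta^{(n)}\to\eta^*$. The only organisational difference is that you exploit $X^{(n)}\in\ker(\eta^{(n)})$ to kill the stable-arc contributions to $\int_\Gamma\eta^{(n)}$ outright and then compare $\eta^{(n)}$ with $\eta^*$ only along $\gamma_{s,i}$ and $\gamma_{0,i}$, whereas the paper compares on the full boundary $\Gamma$ and handles $\int_{\beta_j}\eta^*$ separately via $X^{(n)}\to X^*\in\ker(\eta^*)$; your bookkeeping is slightly cleaner but the content is the same.
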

\begin{proof}
Let $\Gamma=\gamma_{0,i}\cup\gamma_{s,i}\cup\beta_1\cup\beta_2$ be the closed curve such that 
\[
\beta_1(s)=e^{sX^{(n)}}\circ\phi(-\varepsilon,\cdots,-\varepsilon) \quad\text{ and }\quad \beta_1(s)=e^{sX^{(n)}}\circ\phi(\varepsilon,\cdots,\varepsilon)
\]
$\Gamma$ bounds a disk that is defined by 
\[
D(s,u)=e^{sX^{(n)}}\circ \gamma_{0,i}(u).
\]
We recall that from the definition of $\eta^{(n)}$, the set of points where $\eta^{(n)}$ is given by the iterates of the boundaries of the section $f^{T_n}\mathcal S$ therefore since $D$ is transversal to the flow then we can partition the disk $D$ into finite number of disks $D_1,\cdots, D_\ell$ such that $\eta^{(n)}$ is differentiable in $int(D_i)$ and $int(D_i)\cap int(D_j)=\emptyset$ for $i\neq j$. Using Stokes Theorem  we have
\[
\int_\Gamma\eta^{(n)}=\sum_{i=1}^\ell\int_{\Gamma_i}\eta^{(n)}=\sum_{i=1}^\ell\int_{D_i}d\eta_i^{(n)}
\]
Using \eqref{eq:tn} and the fact that each $D_i$ is tangent to $X^{(n)}$ we have
\begin{equation}\label{eq:Gamma1}
\left|\int_\Gamma\eta^{(n)}\right|\leq \sum_{i=1}^\ell|D_i|\|i_{X^{(n)}}\circ d\eta^{(n)}\|\leq C|D| \theta^{n/2}.
\end{equation}
We have the following
\begin{equation}\label{eq:Gamma2}
\int_{\gamma_{s,i}}\eta^*=\int_\Gamma\eta^{(n)}+\int_\Gamma(\eta^*-\eta^{(n)})+\int_{\beta_1}\eta^*-\int_{\beta_2}\eta^*.
\end{equation}
Since $X^{(n)}\to X_*\in\ker(\eta^*)$ then we have 
\begin{equation}\label{eq:Gamma3}
\lim_{n\to\infty}\int_{\beta_1}\eta^*=\lim_{n\to\infty}\int_{\beta_2}\eta^*=0
\end{equation}
From \eqref{eq:conv} we have $\eta^{(n)}\to\eta^*$  which implies
\begin{equation}\label{eq:Gamma4}
\lim_{n\to\infty}\int_\Gamma(\eta^*-\eta^{(n)})=0
\end{equation}
Substituting \eqref{eq:Gamma1}, \eqref{eq:Gamma3} and \eqref{eq:Gamma4} into \eqref{eq:Gamma2} we get the result.
\end{proof}
We observe that for every $q=\mathcal W_n(s,u_1,\cdots, u_m)$
 we have that 
\[
T_q\mathcal W_n=span\{X^{(n)}, De^{sX^{(n)}}\frac{d\gamma_{s,1}}{du},\cdots, De^{sX^{(n)}}\frac{d\gamma_{s,m}}{du}\}
\]
Therefore using the above claim we have and the fact that $De^{sX^{(n)}}$ is converging from Lemma \ref{lem:DeX} then we have 
\[
T_q\mathcal W_n\to\ker(\eta^*)\quad\text{ as }\quad n\to\infty.
\]
Thus $\mathcal W_n$ converges to a submanifold that is tangent to $\ker(\eta^*)$. This implies that $\ker(\eta^*)$ is integrable and since $F^*$ and $F$ are topologically equivalent, we get Theorem \ref{thm:main}.
\end{proof}

\section{Special synchronisation equation}\label{Sec:synceq}
The purpose of this section is to rewrite the quantities involved in \eqref{eq:synceq} in terms of the time change $\psi$ and the $1$-form in the first item of Definition \ref{def:sync}. 
Throughout this section, we suppose that we have a codimension-one Anosov flow $F=\{f^t, t\in\mathbb{R}\}$ generated by a $C^1$ vector field $Z$ whose stable bundle is generated by a $C^1$ vector field $X$.
We recall that we have a globally defined $1$-forms $\eta$ with the properties that
\begin{equation}\label{eq:eta}
(f^{t})^*\eta=\eta\quad\text{ and }\quad \eta(Z)\equiv1\quad \forall t\in\mathbb{R}.
\end{equation}
Since the stable bundle is one dimensional and generated by a vector field $X$ then the weak unstable bundle $\bE^{cu}$ is defined by a $1$-form $\omega$ with the properties that
\begin{equation}\label{eq:omega}
\ker \omega = \bE^{cu},
\quad\text{ and }\quad
\omega(X)\equiv1.
\end{equation}
We also recall that the weak unstable bundle being of codimension-one is $C^1$ and since $X$ is also a $C^1$ vector filed therefore the $1$-form $\omega$ is $C^1$. 
 Given a $C^{2}$ positive function $\psi: M\to(0,\infty)$, we define the function $h_\psi: M\to\mathbb{R}$ as
\[
h_\psi:=\frac{1}{\psi}\int_{0}^{\infty}d(\psi\circ f^t)(X)dt.
\]
By \eqref{eq:xi}, the stable bundle of the flow $F_\psi$ is given by 
\[
X_\psi=X+h_\psi Z.
\]
\begin{lem}\label{lem:LineL}
There exists a continuous line bundle $L\in\bE^u$ and a $C^1$ differential $1$-form $\alpha_0$ with $\alpha_0(Z)\equiv1$ such that \(
|\alpha_0|_{L}|>\frac{1}{2}.
\)
\end{lem}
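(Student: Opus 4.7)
The plan is to first produce a continuous line subbundle $L\subset\bE^u$, and then construct a smooth (hence $\cC^1$) $1$-form $\alpha_0$ adapted to $L$.

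First, fix a smooth Riemannian metric $g$ on $M$. Since $\dim\bE^u\geq 2$ and passing to a finite covering preserves both the Anosov structure and the subsequent arguments of the paper, I may assume $\bE^u$ admits a continuous nowhere-vanishing section $V$ with $\|V\|_g\equiv 1$, and set $L:=\langle V\rangle$. Using partitions of unity I approximate $V$ in the $\cC^0$ topology by a smooth vector field $V'$ with $\|V-V'\|_g<\varepsilon$ for a small $\varepsilon>0$ to be chosen. Uniform transversality of $\bE^{cs}$ and $\bE^u$ over the compact manifold $M$ provides a constant $\theta_0>0$ such that, for $\varepsilon$ small enough, the $g$-angle between $Z_q$ and $V'_q$ is bounded below by $\theta_0$ uniformly in $q\in M$.

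Next I construct $\alpha_0$. The distribution $\Pi:=\langle Z,V'\rangle$ is a smooth rank-two subbundle of $TM$ (thanks to the lower angle bound), and so is its $g$-orthogonal complement $\Pi^{\perp}$. Let $\alpha_0$ be the unique smooth $1$-form on $M$ satisfying $\alpha_0(Z)\equiv 1$, $\alpha_0(V')\equiv 1$, and $\alpha_0|_{\Pi^{\perp}}\equiv 0$. By construction $\alpha_0$ is smooth and $\alpha_0(Z)\equiv 1$. Two-dimensional linear algebra on $\Pi$ yields a uniform pointwise bound $|\alpha_0(w)|\leq C\|w\|_g$ for all $w\in TM$ with constant $C$ depending only on $\theta_0$, so
\[
|\alpha_0(V_q)|\;=\;\bigl|\alpha_0(V'_q)+\alpha_0(V_q-V'_q)\bigr|\;\geq\;1-C\varepsilon.
\]
Choosing $\varepsilon<1/(2C)$ gives $|\alpha_0(V_q)|>1/2$ uniformly, and since $V_q$ is a $g$-unit vector in $L_q$ this establishes $|\alpha_0|_L|>1/2$ everywhere, as required.

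The main obstacle is the topological step of producing a continuous nowhere-vanishing section of $\bE^u$: this is not automatic for continuous rank-$\geq 2$ bundles, and requires either a specific vanishing argument exploiting the structure of codimension-one Anosov flows (note that $TM$ already admits the non-vanishing section $Z$, so $\chi(M)=0$) or passing to a finite cover of $M$ on which $\bE^u$ acquires such a section. Since the paper's ultimate conclusion concerns topological equivalence and none of the intervening dynamical hypotheses are lost under finite covers, this step is legitimate; the remainder of the construction is a routine transversality-and-smoothing argument.
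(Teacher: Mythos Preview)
Your construction of $\alpha_0$ once a line bundle $L=\langle V\rangle\subset\bE^u$ is in hand is a valid and more direct alternative to the paper's approach. The paper builds, for each $p\in M$, a local $\cC^1$ one-form $\alpha_p$ with $\mathfrak{L}_Z\alpha_p\equiv 0$ and prescribed values at $p$, then glues these over a finite cover by a partition of unity; you instead give a single global formula via a smooth approximation $V'$ of $V$ and the orthogonal splitting $TM=\langle Z,V'\rangle\oplus\langle Z,V'\rangle^{\perp}$. Both routes yield a $\cC^1$ one-form with $\alpha_0(Z)\equiv 1$ and $|\alpha_0|_L|>\tfrac12$; yours avoids the local-to-global gluing entirely. (One small slip: since $Z$ is only $\cC^1$ in this section, your $\alpha_0$ is $\cC^1$ rather than smooth---but $\cC^1$ is all the lemma claims.)

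The genuine gap is the one you flag yourself: the existence of a nowhere-vanishing continuous section of $\bE^u$. Your proposed remedy---pass to a finite cover---is asserted but not justified, and in general does not work. The primary obstruction to such a section is the Euler class $e(\bE^u)$, and for a finite cover $p\colon M'\to M$ one has $e(p^*\bE^u)=p^*e(\bE^u)$; there is no mechanism by which a finite cover kills a nonzero Euler class (in the extreme case of simply connected base there are no nontrivial finite covers at all), and you offer no argument specific to codimension-one Anosov flows forcing this obstruction to vanish. It is worth noting that the paper's route to $L$---perturb $Z$ inside $\bE^{cu}$ to a nearby $\widetilde Z$ ``nowhere equal'' to $Z$ and project to $\bE^u$---faces exactly the same difficulty: for the projection to be nowhere zero one needs $\widetilde Z$ nowhere \emph{proportional} to $Z$, which is precisely a nowhere-vanishing section of $\bE^{cu}/\langle Z\rangle\cong\bE^u$. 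So neither argument actually settles the existence of $L$, but the finite-cover claim in your write-up should not be presented as if it resolves the issue.
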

\begin{proof}
Let $L\in\bE^{u}$ be a $C^{\theta}$ line bundle; this can be obtained by first considering a vector field  $\widetilde Z$ in $\bE^{cu}$ that approximates $Z$ but nowhere equal and then project this vector field onto $\bE^u$.
Let $p\in M$, in a neighborhood of $p$, we can find $\alpha_p$ solve the following equation
\[
\mathfrak L_Z\alpha_p\equiv0\quad \alpha_p(Z)(p)=1 \quad \alpha_p (v^u)(p)=1\quad \text{ and }\quad \alpha(\widetilde Z)(p)=0
\]
where $v^u\in L(p)$ defined by $Z(p)=\widetilde Z(p)+v^u$. Since $Z$ is $C^1$, $\alpha_p$ is also $C^1$.
This implies that there exists $\delta_p>0$ such that $\alpha_p(v^u)>\frac{1}{2}$ for $v^u\in L(q)$ with $Z(q)=\widetilde Z(q)+v^u$  and   $q\in B(p,\delta_p)$. This defines an open cover $\{B_1,\cdots, B_n\}$ such that in each $B_i$ there exists a $C^{1}$ differential $1$-form $\alpha_i$  such that $\|\alpha_i|_{L}\|>\frac{1}{2}$. We take a partition of unity $\{\rho_1,\cdots,\rho_n\}$ subject to the partition and define 
$$
\alpha=\sum_{i=1}^n\rho_i\alpha_i
$$
It is easy to see that 
\(
\|\alpha|_{L}\|>\frac{1}{2}
\)
which implies the result.

\end{proof}
Let $\alpha_0$ given by the above Lemma, and we define ${\alpha}$ by
\[
{\alpha}:=\alpha_0-\alpha_0(X)\omega.
\]
Since $X$ and $\omega$ are $C^{1}$ then $\alpha$ defines a $C^{1}$ form on $TM$. 
It is easy to see that 
\begin{equation}\label{eq:alpha1}
 \alpha(Z)\equiv1\quad\text{ and }\quad X\in\ker(\alpha).
\end{equation}
Given a $C^2$ positive function $\psi$, we let $\alpha^\psi$ be the $1$-form defined by 
\[
\alpha^\psi:=\psi(\alpha-h_{\psi}\omega).
\]
Then it easy to see that $X_\psi\in\ker(\alpha^\psi)$ and $\alpha^\psi(Z_\psi)\equiv1$. The main purpose of proving the Verjovsky conjecture is to solve the following equation 
\begin{equation}\label{eq:Verj}
\mathfrak{L}_{X_\psi}\alpha^{\psi}=i_{X_\psi}\circ d\alpha^\psi\equiv0.
\end{equation}

\begin{rem}\normalfont
We first observe that since $X_\psi\in\ker(\alpha^\psi)$ then we have $\mathfrak{L}_{X_\psi}\alpha^\psi(X_\psi)\equiv0.$   In addition, since the stable bundle of the Anosov flow generated by $Z_\psi=Z/\psi$  is given by $X_\psi$ then $[X_\psi, Z_{\psi}]=a_\psi X_\psi$ for some function $a_\psi$, which using Cartan Formula and the fact that $\alpha^\psi(Z_\psi)\equiv1$ imply $\mathfrak{L}_{X_\psi}\alpha^\psi(Z_\psi)=0$. Therefore to construct $\psi$ such that $\mathfrak{L}_{X_\psi}\alpha^\psi\equiv0$, we can focus on anihilating $\mathfrak{L}_{X_\psi}\alpha^\psi|_{\bE^u}$.
\end{rem}

By the definition of the Lie derivative, writing $\beta^\psi:=\alpha-h_\psi\omega$ gives
\[
\mathfrak{L}_{X_\psi}\alpha^{\psi}=X_\psi(\psi)\cdot\beta^\psi+\psi\mathfrak{L}_{X_\psi}\beta^\psi
\]

Using Cartan Formula and the fact that $X_\psi\in\ker(\beta^\psi)$ we have
\(
\mathfrak{L}_{X_\psi}\beta^\psi=i_{X_{\psi}}\circ d\beta^\psi.
\)
From the definition of $\beta^\psi,$ we have $d\beta^\psi=d\alpha-dh_\psi\wedge\omega-h_\psi d\omega$.
Therefore we have
\[
\mathfrak{L}_{X_\psi}\beta^\psi(v^u)= d\beta^\psi(X_\psi,v^u)=dh_\psi(v^u)+h_\psi\cdot(d\alpha(Z,v^u)-d\omega(X,v^u))+d\alpha(X,v^u)
\]
Thus, solving \eqref{eq:Verj}, is equivalent to finding a positive $C^2$ function $\psi$ such that 

\begin{equation}
\left(\frac{X_\psi(\psi)}{\psi}\alpha+dh_\psi+h_\psi\cdot(i_Z\circ d\alpha-i_X\circ d\omega)+i_X\circ d\alpha\right)|_{\bE^u}\equiv0
\end{equation}

For notational purposes we write
\[
\Uptheta(\psi):=X_\psi(\psi)\alpha+\psi\cdot dh_\psi+\psi\cdot h_\psi\cdot(i_Z\circ d\alpha-i_X\circ d\omega)+\psi i_X\circ d\alpha
\]
Observe that \eqref{eq:Verj} is equivalent to finding a $C^2$ positive function with 
\begin{equation}\label{eq:sync}
\Uptheta(\psi)|_{\bE^u}\equiv0.
\end{equation}
To simplify the notations, we write $g_\psi=\psi\cdot h_\psi=\int_0^\infty d(\psi\circ f^t)dt$ to have
\[
\Uptheta(\psi)=X(\psi)\alpha+ dg_\psi+g_\psi\cdot(-\frac{d\psi}{\psi}+\frac{Z(\psi)}{\psi}\alpha+i_Z\circ d\alpha-i_X\circ d\omega)+\psi i_X\circ d\alpha
\]
We will also need the following notation
\begin{equation}\label{eq:Upthetabar}
\overline\Uptheta(\psi):=dg_\psi+g_\psi\cdot(-\frac{d\psi}{\psi}+\frac{Z(\psi)}{\psi}+i_Z\circ d\alpha-i_X\circ d\omega)+\psi i_X\circ d\alpha.
\end{equation}

\begin{rem}\normalfont
We emphasize that, for the purpose of proving the Verjovsky Conjecture, we will not solve \eqref{eq:Verj} in this strong formulation. However we will construct a sequence of function that nearly solve the equation near pieces of sections as required in the definition of special synchronisability.
\end{rem}

\section{The time change}\label{sec:time}
This section is devoted to prove the following
\begin{prop}\label{prop:spesync}
Let $M$ be a smooth manifold of dimension at least four and $F=\{f^t: t\in\mathbb{R}\}: M\to M$ be a $C^1$ codimension-one volume preserving Anosov flow such that the stable bundle is generated by a $C^1$ vector field $X$ with the property $Df^tX=e^{-t}X$ for every $t\in\mathbb{R}$. Then $F$ is specially synchronisable. 
\end{prop}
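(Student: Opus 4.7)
The first item of Definition~\ref{def:sync} is essentially a free choice: taking $\alpha := \alpha_0 - \alpha_0(X)\omega$ with $\alpha_0$ as provided by Lemma~\ref{lem:LineL}, one checks directly from $\omega(X)\equiv 1$ and $Z\in\ker\omega$ that $\alpha(X)\equiv 0$ and $\alpha(Z)\equiv 1$, while $C^1$-regularity of $\alpha$ follows since $X$, $\omega$ and $\alpha_0$ are all $C^1$. The real content is the construction, in items 2 and 3, of a sequence $\{\psi_n\}\subset\mathfrak D$ together with parameters $T_n\to\infty$, $\tau_n\to 0$ verifying both the Cauchy condition in $\mathfrak D$ and the synchronisation bound~\eqref{eq:synceq}.

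The organising framework is the equation $\Uptheta(\psi)|_{\bE^u}\equiv 0$ from Section~\ref{Sec:synceq}: a global $C^2$ solution would give joint integrability outright, but since $\dim\bE^u\ge 2$ while the unknown is a single scalar function, the system is overdetermined and cannot be solved globally. The plan is instead to construct the $\psi_n$ so that $\overline\Uptheta(\psi_n)|_{\bE^u}$ vanishes only in an asymptotic sense on the thin flow boxes $f^{T_n}\mathcal S^{(\tau_n)}$, exploiting the special normalisation $Df^t X = e^{-t}X$, which makes the integrals defining $g_\psi=\int_0^\infty d(\psi\circ f^t)(X)\,dt$ and their unstable derivatives converge exponentially with an explicit geometric rate.

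Fix an admissible section $\mathcal S$ and set $\psi_0$ to be a suitable constant in $\mathfrak D$. Inductively, given $\psi_n$ and parameters $T_n$, $\tau_n$ with $\|Df^{T_n}|_{\bE^s}\|\le \tau_n^2$, choose $T_{n+1}\gg T_n$ and $\tau_{n+1}\ll\tau_n$ and define $\psi_{n+1}$ as $\psi_n$ plus a correction supported in a neighbourhood of $f^{T_{n+1}}\mathcal S^{(\tau_{n+1})}$. The correction is obtained by pairing $\overline\Uptheta(\psi)|_{\bE^u}$ with a preferred direction inside $\bE^u$ to produce a one-dimensional transport equation, solving it along the unstable leaves contained in $f^{T_{n+1}}\mathcal S$, and extending through the flow box by $Z_{\psi_n}$-invariance. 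Bunching~\eqref{eq:bunching} plus the explicit $e^{-t}$ weight in the integrand of $g_{\psi_n}$ converts the geometric smallness $\tau_{n+1}$ and the time-gap $T_{n+1}-T_n$ into geometric smallness of both $\|\psi_{n+1}-\psi_n\|_{\mathfrak D}$ and of the residual $\|i_{X_{\psi_n}}\circ d\alpha^{(n)}_{f^{T_n}\mathcal S^{(\tau_n)}}\|$; choosing the parameters appropriately yields both bounds by $\theta^n$ for a common $\theta\in(0,1)$.

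The principal obstacle is exactly the overdeterminacy. The transport step annihilates only one component of $\overline\Uptheta(\psi_n)$ along $\bE^u$, and the remaining transverse components must be absorbed by the pull-back to the reference region: this requires showing that their pull-back under $f^{-T_{n+1}}$ contracts exponentially in $T_{n+1}$, an estimate that relies on the additional smoothness of these components coming from bunching, in the spirit of Lemma~\ref{lem:regularity}. A secondary technical point is maintaining the $\mathfrak D$-norm regularity—most delicately the $C^1$-regularity of $X(\psi_n)$—uniformly in $n$, which forces one to differentiate expressions involving a merely $C^1$ flow; this is handled by the explicit exponential structure provided by $Df^t X = e^{-t}X$ together with the same bunching-based integration argument already used in Section~\ref{sec:Sim}.
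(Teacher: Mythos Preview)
Your outline captures the inductive architecture correctly (start from a constant, apply a localised correction near $f^{T_n}\mathcal S$, iterate), but two of the core mechanisms are misidentified.

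First, the transport equation is not solved along the unstable leaves. In the paper the correction is built by solving an ODE along the \emph{stable} vector field $X$: one defines $\overline\rho_{h,i}$ on each flow box by $X(\overline\rho_{h,i})=e^{-T}\Phi\circ f^T$ with vanishing initial data on the unstable leaf $\widetilde{\mathcal L}_i$, where $\Phi$ is a mollification of $-\Uptheta(\psi)(V)$. The reason is structural: since $\Uptheta(\psi)=X(\psi)\alpha+\overline\Uptheta(\psi)$ and $\alpha(V)=1$, one has $\Uptheta(\psi)(V)=X(\psi)+\overline\Uptheta(\psi)(V)$, so the way to kill $\Uptheta(\psi)(V)$ on $f^T\mathcal S^{(\tau/2)}$ is to add to $X(\psi)$ the amount $-\Uptheta(\psi)(V)$, i.e.\ to prescribe the $X$-derivative of the correction. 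Solving along unstable leaves would not give you control of $X(\widetilde\psi)$ and would leave the dominant term $X(\psi)\alpha$ untouched.

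Second, the overdeterminacy is not handled by contraction of transverse components under $f^{-T}$. The paper's device is purely algebraic (Lemma~\ref{lem:defV} and Corollary~\ref{cor:basis}): one chooses $V\in\bE^u$ transverse to $\ker\overline\Uptheta(\psi)$ and a spanning set $v_1,\dots,v_m$ with $\alpha(v_i)=1$ and $\overline\Uptheta(\psi)(V-v_i)=0$. Then $\Uptheta(\psi)(v_i)=X(\psi)+\overline\Uptheta(\psi)(v_i)=X(\psi)+\overline\Uptheta(\psi)(V)=\Uptheta(\psi)(V)$ for every $i$, so controlling $\Uptheta(\widetilde\psi)(V)$ automatically controls all directions simultaneously, up to the error $\overline\Uptheta(\widetilde\psi)-\overline\Uptheta(\psi)$. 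That error is small not because of pull-back contraction but because the perturbation lives in flow boxes of height $2\tau$: Lemma~\ref{lem:gvg} shows $\|g_{\rho\circ f^{-T}}\|_{C^0}$ and its unstable derivative are $O(\sqrt\tau)$ simply because the time-integral defining $g$ sees the support of $X(\rho\circ f^{-T})$ only on intervals of length $2\tau$. Your proposed mechanism (transverse components contract under $f^{-T_{n+1}}$) does not correspond to any estimate actually available, and in particular there is no extra smoothness of the transverse components to invoke.
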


The proof of Proposition \ref{prop:spesync} is done in Section \ref{sec:tos} under a more general and technical result in Proposition \ref{prop:loc}. For the rest of this section, we suppose that we have an Anosov flow that satisfies the condition of Proposition \ref{prop:spesync}, i.e., that has the Parry's synchronisation.

\subsection{General stragtegy}
We first remark that Equation \eqref{eq:Verj} is not a pointwise equation as the term $g_\psi$ contains information of along the forward orbit. This makes the equation difficult to be solved locally. \eqref{eq:Verj} can be written as
\begin{equation}\label{eq:gen1}
X(\psi)\alpha+ dg_\psi+g_\psi\cdot(-\frac{d\psi}{\psi}+\frac{Z(\psi)}{\psi}\alpha+i_Z\circ d\alpha-i_X\circ d\omega)+\psi i_X\circ d\alpha\equiv0.
\end{equation}
The special synchronisability condition suggests that it is enough to solve the above equation on the certain pieces of sections of the flow. As we mentioned previously the term 
\begin{equation}\label{eq:gen}
dg_\psi+g_\psi\cdot(-\frac{d\psi}{\psi}+\frac{Z(\psi)}{\psi}\alpha+i_Z\circ d\alpha-i_X\circ d\omega)
\end{equation}
makes it difficult to solve the problem locally. In other words, solving the equation 
\begin{equation}\label{eq:gen2}
X(\psi)\alpha+\psi i_X\circ d\alpha\equiv0
\end{equation}
on a piece of section of the flow is relatively much easier as the solution can be given by the general characteristic method of solving partial differential equation. The only difficulty would be that it is an overdetermined equation as we need it to be satisfied on every vector. The way we overcome this difficulty is to observe that since $X(\psi)\alpha+\psi i_X\circ d\alpha$ is a $1$-form, it has a kernel therefore basically there exists only one direction that we should care about. This is roughly saying that if we try to solve \eqref{eq:gen2} inductively by approximation; say we define a sequence $\psi_n$ that approximate the solution, at the step $n+1$, we can only consider the equation when applied to the vector that is off the kernel of  $X(\psi_{n})\alpha+\psi_n i_X\circ d\alpha$: this is the main motivation of Lemma \ref{lem:defV}.

In order to approximate a solution of \eqref{eq:gen1} with the above described method, we observe that since we only need the solution near the pieces of sections, given a function $\psi$, we can make a small perturbation $\widetilde\psi$ near the section such that $\widetilde\psi$ is closer to a solution of \eqref{eq:gen1} while the quantites in \eqref{eq:gen} are controlled as the perturbation is done in a very short tubular neighborhood of the section. This is done in Proposition \ref{prop:loc}. In Lemma \ref{lem:gvg}, we give an estimate of the terms in \eqref{eq:gen} that suggests that \eqref{eq:gen1} and \eqref{eq:gen2} are almost equivalent when we only need an approximation of a solution on pieces of sections.

One more, we emphasize that we are not going to solve \eqref{eq:gen1}, but the idea is to use an approximation that nearly solves it and is good enough to defined the $1$-form  that gives integrability of the stable and unstable bundle of the limit flow.
\subsection{Time change on sections}\label{sec:tos}
In this section, we suppose that we are given an admissible section $\mathcal S=\{\mathcal D_i\}_{i=1}^N$ and for notational purposes, for $\tau>0$ we write
\[
\mathcal S^{(\tau)}=\{\mathcal U^{(\tau)}_i, i=1,\cdots, N\}
\quad\text{ where }
\quad
\mathcal U_i^{(\tau)}:=\bigcup_{|t|<\tau}f^t\mathcal D_i.
\]

\begin{prop}\label{prop:loc}
Given a function $\psi\in\mathfrak D$, there exists a constant $\Cl{con:main1}>0$ that only depends on $M, \mathcal S, F, \psi, \alpha$ such that for every $\varepsilon>0$, there exists  $T_\varepsilon, \tau>0$  such  that for $T>T_\varepsilon$  there exists $\widetilde\psi_T\in\mathfrak D$ such that 
\begin{equation}
e^{-T}\leq \tau^2\quad\text{ and }\quad\|\psi-\widetilde\psi_T\|_{\mathfrak D}, \|\Uptheta(\widetilde\psi_T)|_{f^T\mathcal S^{(\tau/2)}}\|\leq \Cr{con:main1}\varepsilon.
\end{equation}
\end{prop}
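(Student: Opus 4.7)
The plan is to construct $\widetilde\psi_T=\psi+\delta_T$ where $\delta_T$ is supported in the flow-box $f^{T}\mathcal S^{(\tau)}$ and is small in the $\mathfrak D$-norm. Writing $\Uptheta(\psi)=[X(\psi)\alpha+\psi\, i_Xd\alpha]+\overline\Uptheta(\psi)$ as in Section~\ref{Sec:synceq}, Lemma~\ref{lem:gvg} tells us that $\overline\Uptheta$ depends Lipschitz-continuously on $\psi$ in the $\mathfrak D$-topology, so a small perturbation $\delta_T$ produces only a small change in $\overline\Uptheta$. One is therefore reduced to designing $\delta_T$ so that on $f^{T}\mathcal S^{(\tau/2)}$ one has
\[
X(\delta_T)\,\alpha+\delta_T\, i_X d\alpha \;\approx\; -\Uptheta(\psi)
\]
as $1$-forms restricted to $\bE^u$. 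The target is the restriction of a single covector to $\bE^u_q$, hence its kernel in $\bE^u_q$ has codimension one, so (invoking Lemma~\ref{lem:defV}) one can pick a continuous unit vector field $V_\psi\subset\bE^u$ along which cancelling the equation suffices. The reduced requirement is the pointwise scalar equation
\[
a(q)\,X(\delta_T)(q)+b(q)\,\delta_T(q)=c(q),\qquad a=\alpha(V_\psi),\ b=i_Xd\alpha\,(V_\psi),\ c=-\Uptheta(\psi)(V_\psi),
\]
whose $C^1$ data are bounded in terms of $\psi$.

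Next I would solve this linear first-order ODE along integral curves of $X$ inside each admissible disk of $f^{T}\mathcal S$, prescribing vanishing boundary data at one endpoint of each stable leaf. The quantitative input is the Parry synchronisation identity $Df^{T}X=e^{-T}X$, which forces the $X$-integral curves inside each disk of $f^{T}\mathcal S$ to have length $O(e^{-T})$. Consequently the ODE solution satisfies $\|\delta_T\|_{C^0}=O(e^{-T}\|\Uptheta(\psi)\|_\infty)$ while $\|X(\delta_T)\|_{C^0}=O(\|\Uptheta(\psi)\|_\infty)$. A smooth bump function along the flow direction then confines the support of $\delta_T$ to $f^{T}\mathcal S^{(\tau)}$, and $\delta_T$ is extended by zero to the rest of $M$.

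The main obstacle is verifying $\widetilde\psi_T\in\mathfrak D$ with $\|\widetilde\psi_T-\psi\|_{\mathfrak D}\le C\varepsilon$. Crucially, the $\mathfrak D$-norm does not see $X(\psi)$, so the $O(1)$ size of $X(\delta_T)$ is harmless; what must be controlled is $\delta_T$ itself, its $\bE^{cu}$-derivatives, and the integrals $g_{\delta_T}$ and $V(g_{\delta_T})$ for $V\in\bE^u$. The first is $O(e^{-T})$ by construction, the transverse derivatives inherit only transverse regularity of the data and the cutoff, while the two integral quantities are bounded by $\|X(\delta_T)\|_\infty\cdot\tau\cdot\sum_{k\ge0}e^{-k T_{\min}}=O(\tau)$, thanks to the $\tau$-thinness of the tube and the exponential decay coming from successive returns to the section. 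The constraint $e^{-T}\le\tau^2$ then balances the two scales: choosing $\tau$ of order $\varepsilon$ and $T$ large enough delivers $\|\delta_T\|_{\mathfrak D}\le C\varepsilon$, and by construction $\Uptheta(\widetilde\psi_T)$ on $f^{T}\mathcal S^{(\tau/2)}$ reduces to $\overline\Uptheta(\widetilde\psi_T)-\overline\Uptheta(\psi)$ plus an $O(\varepsilon)$ error from the ODE-solving step. The hard work will therefore lie in making Lemma~\ref{lem:gvg} sharp enough that $\overline\Uptheta$ is genuinely Lipschitz on $\mathfrak D$, and in arranging the cutoff so that the bunching-driven geometric series retains its full decay.
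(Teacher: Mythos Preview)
Your overall strategy coincides with the paper's: perturb by a function supported in a thin tube around the pushed-forward section, build it by integrating a first-order ODE along $X$, control $g_{\delta_T}$ by tube-thinness plus a geometric series, and absorb the secondary terms via the Lipschitz dependence of $\overline\Uptheta$ on $\psi$. The paper even realises the construction equivalently, solving the ODE on the \emph{original} section $\mathcal S$ and then pulling back by $f^{-T}$, which is the same thing thanks to $Df^TX=e^{-T}X$. However your write-up has two genuine gaps.

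\textbf{Regularity of the ODE data.} You assert the coefficients $a,b,c$ are $C^1$. They are not: the vector field $V_\psi$ produced by Lemma~\ref{lem:defV} is only continuous (it is built from $\ker\overline\Uptheta(\psi)$, hence from $dg_\psi$), and $\Uptheta(\psi)$ itself involves $d\alpha$ and $d\omega$, which are merely $C^0$. So $c=-\Uptheta(\psi)(V_\psi)$ is only continuous, your solution $\delta_T$ fails $X(\delta_T)\in C^1$, and $\widetilde\psi_T\notin\mathfrak D$. Worse, the bound on $V(g_{\delta_T})$ in the $\mathfrak D$-norm needs a bound on $VX(\delta_T)$, which does not exist. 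The paper handles this by first $\iota$-mollifying $-\Uptheta(\psi)(V)$ to a smooth $\Phi$ with $\|\Phi+\Uptheta(\psi)(V)\|<\varepsilon$ and $\|\Phi\|_{C^r}\le C\iota^{-r}$, and then enforcing the constraint chain \eqref{eq:conss}, namely $e^{-T_\varepsilon}<\lambda_u^{-T_\varepsilon}<e^{-\zeta T_\varepsilon}<\tau^2<\min\{\iota^4,\sigma^4\}$, so that the blow-up of $\|\Phi\|_{C^1}$ is beaten by the bunching factor.

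\textbf{Why one direction suffices.} The sentence ``its kernel in $\bE^u_q$ has codimension one, so cancelling along $V$ suffices'' is not a valid reduction. The perturbation only adds a scalar multiple of $\alpha|_{\bE^u}$ (the term $X(\delta_T)\alpha$), and there is no reason $\Uptheta(\psi)|_{\bE^u}$ is proportional to $\alpha|_{\bE^u}$; after cancelling along $V$ you could still have $\Uptheta(\widetilde\psi)(w)=\Uptheta(\psi)(w)-\Uptheta(\psi)(V)\alpha(w)+O(\sqrt\tau)$ of order one for generic $w\in\bE^u$. The paper closes this with Corollary~\ref{cor:basis}: at each point one selects a basis $\{v_1,\dots,v_m\}$ of $\bE^u$ with $\alpha(v_i)=1$ \emph{and} $\overline\Uptheta(\psi)(v_i)=\overline\Uptheta(\psi)(V)$, hence $\Uptheta(\psi)(v_i)=\Uptheta(\psi)(V)$ for every $i$. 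Then replacing $X(\psi)$ by $X(\psi)+\Phi\approx X(\psi)-\Uptheta(\psi)(V)$ kills $\Uptheta(\widetilde\psi)(v_i)$ simultaneously for all $i$, up to the $O(\sqrt\tau)$ drift of $\overline\Uptheta$ coming from \eqref{eq:Up1}. Your proposal does not supply this step.

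A small slip: the decomposition in Section~\ref{Sec:synceq} is $\Uptheta(\psi)=X(\psi)\alpha+\overline\Uptheta(\psi)$, with $\psi\,i_Xd\alpha$ already inside $\overline\Uptheta(\psi)$; your bracketed splitting double-counts it. This is harmless in practice since the extra $b\,\delta_T$ term in your ODE is $O(e^{-T})$.
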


\begin{proof}[Proof Proposition \ref{prop:spesync}]
The first requirement in the definition of special synchronisation is satisfied by the $1$-form $\alpha$ that is constructed in \eqref{eq:alpha1}. We  let $\theta\in(0,1)$. We define $\psi_0\equiv3/2$ and let $\psi_1$ be the function defined by Proposition \ref{prop:loc} for $\psi=\psi_0$ and $\varepsilon=\Cr{con:main1}^{-1}(\psi_0)\varepsilon_1$. Then $\eqref{eq:tau}$ is trivially satisfied. Equation \eqref{eq:synceq} follows from the observation in Section \ref{Sec:synceq} that $i_{X_{\psi}}\circ d\alpha^{\psi}=\Uptheta(\psi).$ Inductively given $\psi_{n-1}$, we construct $\psi_n$ from Proposition \ref{prop:loc} with $\varepsilon=\Cr{con:main1}^{-1}(\psi_{n-1})\theta^n$. As in the first step the sequence of functions $\{\psi_n, n\geq1\}$ satisfies the definition of special synchronisation. 
\end{proof}

The rest of this section is devoted to the proof of Proposition \ref{prop:loc}. In Section \ref{sec:defper}, we give the definition of $T_\varepsilon$ and $\widetilde \psi_T$. In Section \ref{sec:properties}, we prove Proposition \ref{prop:loc}.

\subsubsection{Definition of the perturbation}\label{sec:defper}
Since $\Uptheta(\psi)$ is a $1$-form then, to control its norm, it is enough to control is in certain direction that is off the kernel. The motivation of the following Lemma is to define the direction that we need to care about in order to estimate $\|\Uptheta(\psi)\|$.
 \begin{lem}\label{lem:defV}
There exists $\delta>0$ such that for every $\psi\in\mathfrak D$, there exists a continuous vector filed $V\in\bE^u$   with $\alpha(V)=1$ and $\|V\|\leq\delta^{-1}$ such that
\[
\measuredangle(V(q),\ker(\overline\Uptheta(\psi))_q)\geq\delta\quad\text{ whenever }\quad \bE^u_q\not\subset\ker(\overline\Uptheta(\psi))_q.
\] 
\end{lem}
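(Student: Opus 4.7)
The plan is to construct $V$ as a controlled perturbation of the canonical section $V_L$ of the line bundle $L\subset\bE^u$ from Lemma~\ref{lem:LineL}, normalised so that $\alpha(V_L)\equiv 1$ (whence $\|V_L\|\leq 2$). Since $\dim\bE^u\geq 2$, the continuous sub-bundle $K:=\bE^u\cap\ker(\alpha)$ has positive dimension and $\bE^u=L\oplus K$, so every candidate $V\in\bE^u$ with $\alpha(V)=1$ has the form $V=V_L+w$ with $w\in K$. Writing $\beta:=\overline\Uptheta(\psi)|_{\bE^u}$, the problem reduces to choosing $w\in K$ continuously in $q$, with $\|w\|$ uniformly bounded, so that $|\beta(V_L+w)|$ is a uniform fraction of the dual norm $\|\beta\|$ wherever $\beta\neq 0$.

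First I would observe that $\{\beta(\psi):\psi\in\mathfrak D\}$ is uniformly bounded in $C^0$: every $\psi$-dependent term in~\eqref{eq:Upthetabar} involves $\psi$, $g_\psi$, or one of their directional derivatives along $\bE^{cu}$ or $\bE^u$, all of which are controlled by $\|\psi\|_{\mathfrak D}$, while $\alpha$, $\omega$, $d\alpha$, $d\omega$, $X$, $Z$ are fixed continuous data on the compact manifold. The bounded-distortion relation $\|\beta\|^2\sim|\beta(V_L)|^2+\|\beta|_K\|^2$ then shows that at every point at least one of $|\beta(V_L)|$ and $\|\beta|_K\|$ is a definite fraction of $\|\beta\|$.

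Pointwise, the optimal correction is $w=W\cdot\mathrm{sign}(\beta(V_L))\cdot\beta|_K^{\sharp}/\|\beta|_K\|$ for some fixed $W$, which achieves $|\beta(V_L+w)|=|\beta(V_L)|+W\|\beta|_K\|\gtrsim\|\beta\|$, but this is discontinuous. I would therefore replace both the sign and the normaliser by smoothed analogues (for instance $\mathrm{sign}(\beta(V_L))\rightsquigarrow\beta(V_L)/\sqrt{\beta(V_L)^2+\eta\|\beta\|^2}$ and $1/\|\beta|_K\|\rightsquigarrow 1/\sqrt{\|\beta|_K\|^2+\varepsilon\|\beta\|^2}$) depending continuously on $\beta$, with absolute constants $\eta,\varepsilon>0$, and glue the local choices across a finite trivialising cover of $K$ via a partition of unity. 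The smoothed correction carries the same sign as $\beta(V_L)$ by construction, eliminating cancellation, while keeping $\|w\|$ uniformly bounded by the $C^0$-estimate on $\beta$.

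The main obstacle is verifying that, for a suitable absolute choice of $W$, $\varepsilon$, $\eta$, the resulting $|\beta(V)|/\|V\|$ is a uniform fraction of $\|\beta\|$ across all three regimes ($|\beta(V_L)|\gtrsim\|\beta\|$, $\|\beta|_K\|\gtrsim\|\beta\|$, and the transition between them) and that consistency across chart overlaps is preserved (the direction $\beta|_K^{\sharp}$ is frame-independent up to orientation, so convex combinations retain both the sign and the lower bound). Compactness of $M$ together with the $C^0$-bound on $\beta(\psi)$ then delivers a single $\delta>0$ valid for all $\psi\in\mathfrak D$ and all $q$.
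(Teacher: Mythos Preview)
Your smoothing scheme has a genuine gap precisely where you flag ``the main obstacle''. With
\[
w \;=\; W\cdot\frac{\beta(V_L)}{\sqrt{\beta(V_L)^2+\eta\|\beta\|^2}}\cdot\frac{\beta|_K^{\sharp}}{\sqrt{\|\beta|_K\|^2+\varepsilon\|\beta\|^2}},
\]
both summands of $\beta(V)=\beta(V_L)+\beta(w)$ carry the factor $\beta(V_L)$. At any point $q$ with $\beta_q(V_L)=0$ but $\beta_q|_K\neq 0$ --- and such points belong to the set where $\bE^u_q\not\subset\ker\overline\Uptheta(\psi)_q$, so the angle condition is required there --- you obtain $\beta(V)=0$ and hence $\measuredangle(V,\ker\beta)=0$. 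No choice of $W,\varepsilon,\eta$ cures this: the smoothed sign annihilates the correction exactly in the regime $\|\beta|_K\|\sim\|\beta\|\gg|\beta(V_L)|$ where the correction is supposed to do all the work. Dropping the sign factor does not help either, since then $\beta(V_L)$ and $\beta(w)$ may cancel; and the partition-of-unity gluing does not rescue the estimate for the same reason, as convex combinations of vectors with $\beta$-values of opposite sign can land in $\ker\beta$.

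Separately, the asserted uniform $C^0$-bound on $\{\overline\Uptheta(\psi):\psi\in\mathfrak D\}$ is false. Membership in $\mathfrak D$ only imposes $1<\psi<2$ together with qualitative regularity; it places no bound on $V(\psi)$, $Z(\psi)$, $g_\psi$ or $V(g_\psi)$, so $\|\psi\|_{\mathfrak D}$ and with it $\|\overline\Uptheta(\psi)|_{\bE^u}\|$ are unbounded over $\mathfrak D$. This bound is in fact unnecessary --- your own formula already yields $\|w\|\leq W$ independently of $\|\beta\|$, and the angle condition is scale-invariant in $\beta$ --- but you invoke it as though it were the source of the uniform $\delta$.

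The paper avoids the sign difficulty by going in the opposite direction: rather than perturb $V_L$ within $K=\ker\alpha\cap\bE^u$, it starts from a direction $\widetilde V\in\bE^u$ making a \emph{large} angle with $\ker\overline\Uptheta(\psi)$ (the normalised dual $\beta^{\sharp}/\|\beta^{\sharp}\|$ gives angle $\pi/2$) and then projects $\widetilde V$ radially onto the cone $C_{\delta,L}$ of half-angle $\delta$ about $L$. Working in the two-plane spanned by $L$ and $\widetilde V$ one checks that the projected direction still makes angle at least $\delta$ with $\ker\beta$, while the defining property $|\alpha|>\tfrac12$ on $C_{\delta,L}$ forces $\|V\|<2\leq\delta^{-1}$ after normalising to $\alpha(V)=1$. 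Since $\delta$ is determined by $\alpha$ and $L$ alone, it is automatically uniform in $\psi$.
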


\begin{proof}
First of all, we recall that from Lemma \ref{lem:LineL}, there exists a line bundle $L\subset\bE^u$ such that $|\alpha|_{L}|>1/2$. Then there exists $\delta>0$ such that $|\alpha|_{C_{\delta, L}}|>1/2$ where $C_{\delta, L}$ is a cone in the unstable bundle of angle $2\delta$ around $L$.
We let 
\[
N=\{q\in M: \bE^u_q\not\subset\ker(\overline\Uptheta(\psi))_q\}.
\]
It is easy to see that since we can choose a vector field $\widetilde V\in\bE^u$ such that $\measuredangle(\widetilde V,\ker(\overline\Uptheta(\psi)))>\delta$. We define define $V_1$ as a projection of $\widetilde V$ onto the cone $C_{\delta, L}$. This gives a required vector field in $N$. We then extend $V_1$ as a vector field in the closure of $N$. Recall that this vector field $V_1$ satisfies $\alpha(V_1)=1$ and $\|V_1\|\leq\delta^{-1}$. This vector field is therefore easily extend to continuous vector field in $M$ since in $M\setminus N$, we can take any extension.
\end{proof}

\begin{cor}\label{cor:basis}
For every $q\in M$, there exists a set of vectors $\{v_1,\cdots, v_m\}$ that spans $\bE^u_q$ such that 
\[
\alpha(v_i)=1,\quad \|v_i\|\leq\delta^{-1}, \quad \overline\Uptheta(\psi)(V-v_i)=0 \quad\text{ and }\quad \measuredangle(v_i,v_j)>\delta/2\quad\text{ for} i\neq j.
\] 
\end{cor}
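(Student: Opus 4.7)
}
The plan is to build the tuple $\{v_1,\ldots,v_m\}$ pointwise around the vector $V$ given by Lemma \ref{lem:defV}, using $V$ as the ``anchor'' and constructing the remaining vectors as small controlled perturbations of $V$ inside the affine set
\[
A_q := \bigl\{v \in \bE^u_q :\ \alpha(v)=1,\ \overline\Uptheta(\psi)(V-v)=0\bigr\}.
\]
First I set $v_1 := V(q)$; then $\alpha(v_1)=1$ and $\|v_1\|\leq\delta^{-1}$ hold by Lemma \ref{lem:defV}, and $\overline\Uptheta(\psi)(V-v_1)=0$ holds trivially. It remains to find $v_2,\ldots,v_m\in A_q$ with the norm/angle bounds, so that $\{v_1,\ldots,v_m\}$ is linearly independent.

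The key geometric inputs are: (i) from Lemma \ref{lem:LineL}, the hyperplane $H_\alpha := \ker(\alpha)\cap \bE^u_q$ has dimension $m-1$ and $V\notin H_\alpha$; (ii) from Lemma \ref{lem:defV}, either $\bE^u_q \subset \ker(\overline\Uptheta(\psi))_q$ (in which case the kernel condition is automatic), or the hyperplane $H_\Uptheta := \ker(\overline\Uptheta(\psi))_q\cap \bE^u_q$ has dimension $m-1$ and $\measuredangle(V,H_\Uptheta)\geq \delta$. In the first case, I pick an orthonormal basis $\{u_2,\ldots,u_m\}$ of $H_\alpha$ and define $v_i := V + t\,u_i$ for a small $t>0$. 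Then $\alpha(v_i)=1$ is automatic, $\overline\Uptheta(\psi)(V-v_i)=-t\,\overline\Uptheta(\psi)(u_i)=0$ because $\bE^u_q \subset \ker(\overline\Uptheta(\psi))$, and the norm and pairwise angle bounds follow by choosing $t$ small (continuity of $\|\cdot\|$ and $\measuredangle$). In the second case, the transversality $\measuredangle(V,H_\Uptheta)\geq \delta$ together with $V\notin H_\alpha$ forces $\{V\}\cup(\text{basis of }H_\Uptheta)$ to span $\bE^u_q$; I then take $\{b_2,\ldots,b_m\}$ an orthonormal basis of $H_\Uptheta$ and correct each to $u_i := b_i - \alpha(b_i)\,V$, so that $u_i\in \ker(\alpha)\cap\bE^u_q$ while remaining close to $H_\Uptheta$, and put $v_i := V + t\,u_i$ with $t$ small.

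The main obstacle is the dimension count in this second case: the simultaneous constraints $u_i\in H_\alpha\cap H_\Uptheta$ would cut dimension down to $m-2$, which is too small to supplement $V$ into a basis of $\bE^u_q$. This is resolved by allowing $u_i$ to move off $H_\Uptheta$ but projecting back into $A_q$ using the angle bound $\delta$ from Lemma \ref{lem:defV}: the projection of $\bE^u_q$ onto $H_\Uptheta$ along the direction of $V$ is a well-defined linear isomorphism from $H_\alpha$ onto $H_\Uptheta$ with operator norm controlled by $\delta^{-1}$, and pulling back a basis of $H_\Uptheta$ gives the missing basis vector in $A_q$ with controlled norm. Finally, uniformity of $\delta$ along $q$ (from Lemma \ref{lem:defV}) combined with continuity of the projection yields the pairwise angle lower bound $\measuredangle(v_i,v_j)>\delta/2$; a compactness argument on the unit sphere in $\bE^u_q$ takes care of the constants.
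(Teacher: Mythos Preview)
There is a genuine gap in your Case~2 argument, and your proposed resolution does not close it. The obstruction is purely linear-algebraic and cannot be bypassed by any projection trick: when $H_\alpha\neq H_\Uptheta$ (the generic situation in Case~2), the constraint set $A_q=\{v\in\bE^u_q:\alpha(v)=1,\ \overline\Uptheta(\psi)(v)=\overline\Uptheta(\psi)(V)\}$ equals $V+(H_\alpha\cap H_\Uptheta)$, an affine $(m{-}2)$-plane whose \emph{linear} span $\mathbb{R}V+(H_\alpha\cap H_\Uptheta)$ has dimension at most $m-1$. Hence no $m$ vectors lying in $A_q$ can possibly span $\bE^u_q$. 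Your projection $P:H_\alpha\to H_\Uptheta$ along $V$ is indeed an isomorphism, but its inverse does not land in $H_\alpha\cap H_\Uptheta$: for $b\in H_\Uptheta$ one computes $P^{-1}b=b-\alpha(b)V$ (this is exactly your $u_i$), and then $\overline\Uptheta(\psi)(P^{-1}b)=-\alpha(b)\,\overline\Uptheta(\psi)(V)\neq 0$ whenever $\alpha(b)\neq 0$. Thus $v_i=V+t\,u_i$ satisfies $\alpha(v_i)=1$ but \emph{not} $\overline\Uptheta(\psi)(V-v_i)=0$; the step ``projecting back into $A_q$'' is circular, not a new idea.

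There is a second, independent issue: in both cases you claim the pairwise angle bound $\measuredangle(v_i,v_j)>\delta/2$ by ``choosing $t$ small'', but as $t\to 0$ every $v_i=V+tu_i$ converges to $V$ and the pairwise angles tend to $0$; the angle separation needs $t$ bounded \emph{below}, not above. The paper avoids both difficulties by a different route: rather than trying to work inside $A_q$, it imposes only the single affine constraint $\overline\Uptheta(\psi)(v)=\overline\Uptheta(\psi)(V)$ --- an affine hyperplane not through the origin (since $\overline\Uptheta(\psi)(V)\neq 0$ by Lemma~\ref{lem:defV}), so $m$ points on it \emph{can} be linearly independent --- and selects $v_2,\ldots,v_m$ directly inside the cone $C_{\delta,L}$ of Lemma~\ref{lem:LineL} with pairwise angles exceeding $\delta$. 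The cone condition $|\alpha|_{C_{\delta,L}}|>1/2$, not a small-perturbation argument, is what furnishes the control on $\alpha(v_i)$ and the norm bound $\|v_i\|\leq\delta^{-1}$.
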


\begin{proof}
If $\bE^u_q\subset\ker(\overline\Uptheta(\psi))_q$ then any set of vectors in $C_\delta$ whose pairewise angle is bigger than $\delta/2$ such that the results holds. If $\bE^u_q\not\subset\ker(\overline\Uptheta(\psi))_q$ then since $\measuredangle(V,\ker(\overline\Uptheta(\psi)))>\delta/2$, we can choose $m-1$ vectors in $ v_2,\cdots,  v_m\in C_{\delta, L}$ such that $ \overline\Uptheta(\psi)(V-v_i)=0$ with the properties that $\measuredangle(v_i,v_j),\measuredangle(V,v_i) >\delta$. We then choose $v_1=V(q)$ get the required family of vectors.
\end{proof}

\paragraph{Definition of $T_\varepsilon$:}
Since the vector field $V$ is continuous and $\psi\in\mathfrak D$ then $\Uptheta(\psi)(V)$ is continuous. By the classical method of mollifying non smooth function, we  can choose $\iota>0$  small enough such that if $\Phi$ is the $\iota$-mollification of the function $-\Uptheta(\psi)(V)$ then we have
\begin{equation}\label{eq:mol1}
\|\Phi+\Uptheta(\psi)(V)\|<\varepsilon.
\end{equation}
Moreover, there exists a constant $\Cl{con:Phi1}>0$ that only depend on $M$ such that 
\begin{equation}
\|\Phi\|_{C^r}\leq\Cr{con:Phi1}\iota^{-r}.
\end{equation}

We recall that from the definition of admissible section, the closure of the local sections are pairwise disjoint then there exists $\sigma>0$ such that the $\sigma$-neighborhoodof the pieces of sections are pairwise disjoint, i.e.,
\begin{equation}\label{eq:disj}
\{q\in M: d(q,\mathcal D_i)<\sigma\}\cap \{q\in M: d(q,\mathcal D_j)<\sigma\}=\emptyset\quad\text{ for }\quad i\neq j.
\end{equation}
For every $i=1,\cdots, \ell$, we define a $\sigma$-thickening  of the local section $\mathcal D_i$ as and local section $\widetilde{\mathcal D}_1$ with the following properties:
\begin{enumerate}
\item $\mathcal D_i\subset\widetilde{\mathcal D}_i$,
\item $\widetilde{\mathcal D}_i$ is admissible disk,
\item $d(\partial \widetilde{\mathcal D}_i, \partial {\mathcal D}_i)\geq\sigma$.
\end{enumerate}
Let  $\lambda_u>1$ be defined such that $\|Df^{-t}|_{\bE^u}\|\leq \lambda_u^{-t}$ for all $t>0$. We also recall that using the fact that $F$ preserves a volume form and $\dim(\bE^s)=1<\dim(\bE^u)$ then we have the following bunching condition 
\begin{equation}\label{eq:bunching1}
e^{-t}\|Df^t|_{\bE^u}\|\leq e^{-\zeta t}
\end{equation}
for some $\zeta>0$ and $t$ large enough.

Let $\tau>0$ and $T_\varepsilon$ such that 
\begin{equation}\label{eq:conss}
e^{-T_\varepsilon}<\lambda_u^{-T_\varepsilon}<e^{-\zeta T_\varepsilon}<\tau^2<\min\{\iota^4, \sigma^4\}
\end{equation}

For each $i$, we define a flow box around $\mathcal D_i$ of height $\tau$ by:
\[
\widetilde{\mathcal U}_i^{(\tau)}:=\bigcup_{|t|<\tau}f^t\widetilde{\mathcal D}_i.
\]
Since $\tau<\sigma$, by \eqref{eq:disj} we have
\[
\widetilde{\mathcal U}^{(\tau)}_i\cap\widetilde{\mathcal U}^{(\tau)}_j=\emptyset.
\]
 We consider the following ordinary differential equation
\begin{equation}\label{eq:rhoc1}
X(\overline\rho_{h,i})=e^{-T}\Phi\circ f^T\text{ on } \widetilde{\mathcal U}^{(\tau/2)}_i\quad\text{ and }\quad\overline\rho_{h,i}(\widetilde{\mathcal L}_i)\equiv0
\end{equation}
or, equivalently
\begin{equation}
X(\overline\rho_{h,i}\circ f^{-T})=\Phi_i\text{ on } f^T\widetilde{\mathcal U}^{(\tau/2)}_i\quad\text{ and }\quad\overline\rho_{h,i}(\widetilde{\mathcal L}_i)\equiv0.
\end{equation}
where 
\[
\widetilde{\mathcal L}_i:=\bigcup_{|t|<\tau/2}f^t{\mathcal L}_i.
\]

Equation \eqref{eq:rhoc1} is a first order linear PDE that has a unique solution given by the  
classical characteristic method of solving PDE:
\[
\overline\rho_{h,i}\circ e^{sX}(q)=e^{-T}\int_0^s\Phi\circ f^T\circ e^{\sigma X}(q)d\sigma\quad\text{ for }\quad q\in \widetilde{\mathcal L}_i\text{ and } s\in[-1,1].
\]
or, equivalently,
\begin{equation}\label{eq:rhoc0}
\overline\rho_{h,i}\circ f^{-T}\circ e^{se^{-T}X}(f^Tq)=e^{-T}\int_0^s\Phi\circ e^{\sigma e^{-T}X}(f^Tq)d\sigma\quad\text{ for }\quad q\in\widetilde{\mathcal L}_i\text{ and } s\in[-1,1].
\end{equation}
The submanifold $\mathcal L_i$ being transversal to the vector field $X$ gives a definition of a function $s_{}:\widetilde{\mathcal U}^{(\tau/2)}_i\to\mathbb{R}$ by 
\[
e^{-s_qX}(q)\in\widetilde{\mathcal L}_i.
\] 
With this function, we can rewrite the solution of Equaiton \eqref{eq:rhoc1} as 
\begin{equation}\label{eq:rhoc0}
\overline\rho_{h,i} (q)=e^{-T}\int_0^{s_q}\Phi\circ e^{(s-s_q) e^{-T}X}(f^Tq)ds.
\end{equation}
Since $\widetilde{\mathcal L}_i$ is $C^1$ then there exists a  constant $\Cl{con:s}>0$ that only depends on $X$ and $\mathcal L_i$ such that 
\begin{equation}
\|s_{(.)}\|_{C^1}\leq \Cr{con:s}.
\end{equation}
Let $\beta_i: \widetilde{\mathcal U}^{(\tau/2)}_i\to[0,1]$ be the standard bump function such that 
\[
\beta_i\equiv1\quad\text{ in }\quad \mathcal D_i\quad\text{ and }\quad \beta_i\equiv0\quad\text{ in }\quad \widetilde{\mathcal D}_i\setminus\mathcal D_i
\]
Moreover, there is a constant $\Cl{con:beta}>0$ that only depend on $M$ such that  
\begin{equation}\label{eq:beta}
\|\beta_i\|_{C^r}\leq\Cr{con:beta}\sigma^{-r}\quad\text{ for every }\quad r>0.
\end{equation}
We define $\rho_{h,i}:  {\mathcal U}^{(\tau/2)}_i\to \mathbb{R}$ by 
\[
\rho_{h,i}:=\beta_i\cdot\overline{\rho}_{h,i}.
\]
\begin{rem}
The motivation of using the bump function is to define a function that is supported in $\widetilde{\mathcal D}_i$ and 
\begin{equation}
X(\rho_{h,i})=e^{-T}\Phi\circ f^T\quad\text{ in }\quad \mathcal U^{(\tau/2)}_i
\end{equation}
\end{rem}
Let $\rho_v:\mathbb{R}\to[0,1]$ be the standard bump function that satisfes 
\begin{equation}\label{eq:rhov}
\text{supp}(\rho_v)\in(-\tau,\tau)\quad \rho_v(0)=1\quad\text{ in  }\quad (-\tau/2,\tau/2)\quad\text{ and }\quad \|\rho_v\|_{C^r}\leq \Cr{con:beta}\tau^{-r}.
\end{equation}
For every $i$, we define a function $\rho_{v,i}:\widetilde{\mathcal U}^{(\tau)}_i\to[0,1]$  by 
\[
\rho_{v,i}(q)=\rho_v(t_q)
\]
where $t_q$ is defined by $f^{-t_q}q\in\widetilde{\mathcal U}^{(\tau/2)}_i$. We observe that the function $t_{(.)}: \widetilde{\mathcal U}^{(\tau)}_i\to(-\tau,\tau)$ is $C^1$ and there exists a constant $\Cl{con:t}>0$ that only depend on $\widetilde{\mathcal D}_i$ and $Z$ such that 
\begin{equation}\label{eq:t}
\|t_{(.)}\|_{C^1}\leq\Cr{con:t}.
\end{equation}
We define the function $\rho:M\to\mathbb{R}$ by 
\[
\rho:=\sum_i\rho_i\quad\text{ where }\quad\rho_i:=\rho_{h,i}\rho_{v,i}.
\]
We define $\widetilde\psi_T:M\to\mathbb{R}$ by 
\begin{equation}
\widetilde\psi_{T}=\psi+\rho\circ f^{-T}.
\end{equation}
To simplify the notations, in all estimates in the following, we write $\widetilde \psi$ to mean $\widetilde\psi_T$.

\subsubsection{Estimates of the perturbation}\label{sec:properties}
This section is devoted to estimate the function $\rho$ defined in the previous section.
\begin{lem}
There exists a constant $\Cl{con:rhoh1}>0$ such that for every $i=1,\cdots, N$ we have the following
\begin{equation}\label{eq:XZ}
\|X(\rho_i\circ f^{-T})\|_{C^0}\leq\Cr{con:rhoh1}(\|\Uptheta(\psi)(V)\|_{C^0}+\varepsilon), \quad\|Z(\rho_i\circ f^{-T})\|_{C^0}\leq \Cr{con:rhoh1}\tau^{-1}e^{-T}(\|\Uptheta(\psi)(V)\|_{C^0}+\varepsilon)
\end{equation}
If $V\in\bE^u$ is a unit vector then we have
\begin{equation}\label{eq:V}
\|V(\rho_{i}\circ f^{-T})\|_{C^0}\leq\Cr{con:rhoh1}\tau^{-1}\lambda_u^{-T}
\end{equation}
\begin{equation}\label{eq:VX}
\|VX(\rho_{i}\circ f^{-T})\|_{C^0}\leq\Cr{con:rhoh1}\iota^{-1}.
\end{equation}
\end{lem}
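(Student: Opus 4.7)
The plan is to apply the chain rule through $f^{-T}$ and the product rule to the factorisation $\rho_i=\beta_i\overline\rho_{h,i}\rho_{v,i}$, then substitute the defining ODE $X(\overline\rho_{h,i})=e^{-T}\Phi\circ f^T$ and estimate the surviving pieces using three structural inputs: (i) on the synchronised flow, $Df^{-T}X=e^TX$, $Df^{-T}Z=Z$, and $\|Df^{-T}W\|\leq\lambda_u^{-T}$ for every unit $W\in\bE^u$; (ii) in the product chart $(q_0,t)\mapsto f^tq_0$ on $\widetilde{\mathcal U}^{(\tau)}_i$ the admissible disk is $\{t=0\}$ and tangent to $X$, so that $X(t_{(\cdot)})\equiv0$ and in particular $X(\rho_{v,i})\equiv0$; (iii) the size hierarchy $e^{-T}<\lambda_u^{-T}<\tau^2<\min\{\iota^4,\sigma^4\}$ from \eqref{eq:conss} to absorb lower-order contributions. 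Throughout, $\lesssim$ hides fixed constants depending on $\psi$, $\alpha$, $\cM$, $\mathcal S$, and the already established $\Cr{con:s}$, $\Cr{con:t}$, $\Cr{con:beta}$, $\Cr{con:Phi1}$.

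For \eqref{eq:XZ}, input (i) gives $X(\rho_i\circ f^{-T})(q)=e^TX(\rho_i)(f^{-T}q)$, and expanding the product while using $X(\rho_{v,i})\equiv0$ and the ODE yields
\[
X(\rho_i\circ f^{-T})(q)=\beta_i(f^{-T}q)\,\Phi(q)\,(\rho_{v,i}\circ f^{-T})(q)+e^T X(\beta_i)(f^{-T}q)\,\overline\rho_{h,i}(f^{-T}q)\,(\rho_{v,i}\circ f^{-T})(q).
\]
The first summand is bounded by $\|\Phi\|_{C^0}\leq\|\Uptheta(\psi)(V)\|_{C^0}+\varepsilon$ via \eqref{eq:mol1}; in the second the $e^T$ cancels the $e^{-T}$ in $\|\overline\rho_{h,i}\|_{C^0}\lesssim e^{-T}(\|\Uptheta(\psi)(V)\|_{C^0}+\varepsilon)$ (from the integral formula using $|s_q|\leq\Cr{con:s}$), leaving $\|X(\beta_i)\|_{C^0}\lesssim\sigma^{-1}$, absorbed into the constant. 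For the $Z$-bound, $Df^{-T}Z=Z$ reduces the task to $Z(\rho_i)$ at $f^{-T}q$; the leading term $\rho_{h,i}\cdot Z(\rho_{v,i})$ is $\lesssim e^{-T}(\|\Uptheta(\psi)(V)\|_{C^0}+\varepsilon)\cdot\tau^{-1}$ using $\|\rho_v\|_{C^1}\|t_{(\cdot)}\|_{C^1}\lesssim\tau^{-1}$, while $Z(\beta_i)\overline\rho_{h,i}\rho_{v,i}$ and $\beta_iZ(\overline\rho_{h,i})\rho_{v,i}$ are strictly smaller by the hierarchy (one differentiates the integral formula for $\overline\rho_{h,i}$ and applies $Df^TX=e^{-T}X$, $Df^TZ=Z$).

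For the unit $V\in\bE^u$ estimate \eqref{eq:V}, the product rule isolates the dominant term $\rho_{h,i}\cdot V(\rho_{v,i}\circ f^{-T})\lesssim e^{-T}(\|\Uptheta(\psi)(V)\|_{C^0}+\varepsilon)\tau^{-1}\lambda_u^{-T}\lesssim\tau^{-1}\lambda_u^{-T}$. For the remaining $V(\rho_{h,i}\circ f^{-T})\rho_{v,i}$ one rewrites $\overline\rho_{h,i}\circ f^{-T}$ at $q$ using the commutation $f^T\circ e^{\sigma X}=e^{\sigma e^{-T}X}\circ f^T$ (a direct consequence of $f^T_*X=e^{-T}X$) as
\[
(\overline\rho_{h,i}\circ f^{-T})(q)=e^{-T}\!\int_0^{s'}\!\Phi\bigl(e^{(\sigma-s')e^{-T}X}q\bigr)\,d\sigma,\qquad s':=s_{f^{-T}q}.
\]
Since the flow parameter $(\sigma-s')e^{-T}$ has size $\lesssim e^{-T}$, the $V$-derivative of the integrand is $\lesssim\|d\Phi\|\lesssim\iota^{-1}$, giving $|V(\overline\rho_{h,i}\circ f^{-T})|\lesssim e^{-T}\iota^{-1}$, which is $\lesssim\lambda_u^{-T}\tau^{-1/2}\lesssim\tau^{-1}\lambda_u^{-T}$ by the hierarchy $\iota>\tau^{1/2}$.

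The main obstacle is \eqref{eq:VX}. From the expression for $X(\rho_i\circ f^{-T})$ in the second paragraph, applying $V$ produces: (a) $V(\Phi)\cdot(\rho_{v,i}\circ f^{-T})$, bounded directly by $\|\Phi\|_{C^1}\leq\Cr{con:Phi1}\iota^{-1}$, crucially because $V$ acts on $\Phi$ at the base point $q$ with no chain rule through $f^{-T}$; (b) $\Phi\cdot V(\rho_{v,i}\circ f^{-T})\lesssim\tau^{-1}\lambda_u^{-T}\lesssim\tau\lesssim\iota^{-1}$ by the hierarchy; and (c) three terms from $V$ hitting $e^TX(\beta_i)(f^{-T}\cdot)\overline\rho_{h,i}(f^{-T}\cdot)(\rho_{v,i}\circ f^{-T})$. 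In each of (c), the $e^{-T}$ from $\overline\rho_{h,i}$ (or from $V(\overline\rho_{h,i}\circ f^{-T})\lesssim e^{-T}\iota^{-1}$) cancels the $e^T$; combining $\|\beta_i\|_{C^2}\lesssim\sigma^{-2}$, $\|Df^{-T}V\|\leq\lambda_u^{-T}$, the estimates on $V(\overline\rho_{h,i}\circ f^{-T})$ and $V(\rho_{v,i}\circ f^{-T})$ from paragraph 3, and the hierarchy $\sigma^{-2}\lambda_u^{-T}\lesssim\sigma^{-2}\tau^2\lesssim\sigma^2$, each is $\lesssim\iota^{-1}$, yielding the claimed bound.
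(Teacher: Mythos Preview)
Your proof is correct and follows essentially the same route as the paper: chain rule through $f^{-T}$ using $Df^{-T}X=e^TX$, $Df^{-T}Z=Z$, $\|Df^{-T}|_{\bE^u}\|\leq\lambda_u^{-T}$, then the product rule on $\rho_i=\beta_i\overline\rho_{h,i}\rho_{v,i}$, the vanishing $X(t_{(\cdot)})=0$, substitution of the defining ODE, and the hierarchy \eqref{eq:conss} to dominate the remainders. The only cosmetic difference is that for the $\bE^u$-derivatives the paper pushes $V$ back to the section via $V_T:=Df^{-T}V$ and differentiates $\rho_i$ there, whereas you stay at $q$ and use the commutation $f^T\circ e^{\sigma X}=e^{\sigma e^{-T}X}\circ f^T$ to rewrite $\overline\rho_{h,i}\circ f^{-T}$; these are the same computation in different coordinates, and your handling of the $Z$-term (treating $Z(\overline\rho_{h,i})$ as small rather than zero) is in fact slightly more careful than the paper's.
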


\begin{proof}
For every $i$, by definition of $\rho_i$, we have the following
\[
\begin{aligned}
X(\rho_i\circ f^{-T})(f^Tq)&=e^{T}(X(\rho_{v,i})\cdot\rho_{h,i}+\rho_{v,i}\cdot X(\rho_{h,i}))(q)\\
&=e^T(X(t_q)\cdot\rho_{v,i}'(t_q)+\rho_{v,i}(t_q)\cdot X(\rho_{h,i})(\pi_i q))
\end{aligned}
\]
Since $X$ is tangent to $\widetilde{\mathcal D}_i$ we have $X(t_q)=0$ then, using that $\rho_{h,i}=\beta_i\cdot\overline\rho_{h,i}$ we have
\[
\|X(\rho_i\circ f^{-T})\|_{C^0}\leq e^{T}(\|\beta_i\|_{C^1}\cdot\|\overline\rho_{h,i}\|_{C^0}+\|\beta_i\|_{C^0}\cdot\|X(\overline\rho_{h,i})\|_{C^0})
\]
Using \eqref{eq:beta}, \eqref{eq:rhoc1} and \eqref{eq:rhoc0}, we have
\begin{equation}\label{eq:xproof}
\|X(\rho_i\circ f^{-T})\|_{C^0}\leq2\Cr{con:beta}\sigma^{-1}\|\Phi\|\leq 2\Cr{con:beta}\sigma^{-1}(\|\Uptheta(\psi)(V)\|_{C^0}+\varepsilon)
\end{equation}
where the last inequality uses \eqref{eq:mol1}. To estimate $\|Z(\rho_i\circ f^{-T})\|_{C^0}$, since $Z(\rho_{h,i})=0$, we can write
\begin{equation}\label{eq:zproof}
\|Z(\rho_i\circ f^{-T})\|\leq\|\rho_{v,i}\|_{C^1}\|\rho_{h,i}\|_{C^0}\leq \Cr{con:beta}\tau^{-1}e^{-T}(\|\Uptheta(\psi)(V)\|_{C^0}+\varepsilon)
\end{equation}
where the last inequality uses \eqref{eq:rhov} and \eqref{eq:rhoc0}. Equations \eqref{eq:xproof} and \eqref{eq:zproof} give the estimate of \eqref{eq:XZ} for any $\Cr{con:rhoh1}>\max\{\Cr{con:beta}, 2\Cr{con:beta}\sigma^{-1}\}$.
For $V\in\bE^u_{f^Tq}$ is a unit vector for some $q\in\mathcal U_i^{(\tau)}$, then  $V\in T_{f^Tq}f^T\mathcal U_i^{(\tau)}$. In particular there exists a unit vector  $V_0\in\bE^u_q$ such that $V=Df^{T}V_0/\|f^{T}V_0\|$ then we have 
 $V_T=Df^{-T}V=V_0/\|f^{T}V_0\|$ to have
\[
\begin{aligned}
V(\rho_i\circ f^{-T})(f^Tq)&=(V_T(\rho_{v,i})\cdot\rho_{h,i}+\rho_{v,i}\cdot V_T(\rho_{h,i}))(q)\\
&=V_T(t_q)\cdot\rho_{v,i}'(t_q)+\rho_{v,i}(t_q)\cdot D\pi_iV_T(\rho_{h,i})(\pi_i q)
\end{aligned}
\]
Since $V\in\bE^u$, we have $\|V_T\|\leq \lambda_u^{-T}$ and using \eqref{eq:t} we have $\|V_T(t_q)\|\leq \Cr{con:t} \lambda_u^{-T}$. To estimate $D\pi_iV_T(\rho_{h,i})$, we use its definition to have
\[
\|D\pi_iV_T(\rho_{h,i})\|\leq \lambda_u^{-T}(\|D\pi_iV_0(\beta_i)\|_{C^0}\|\overline\rho_{h,i}\|_{C^0}+\|\beta_i\|_{C^0}\|D\pi_iV_0(\overline\rho_{h,i})\|).
\]
From the definition of  $\overline\rho_{h,i}$ in \eqref{eq:rhoc0},   if $Y$ is a unit vector field tangent to $\mathcal D_i$ then we have
$\|Y(\overline\rho_{h,i})\|\leq e^{-T}\|s_{(.)}\|_{C^1}\|\Phi\|_{C^0}+e^{-T}\|Y_T\|\|\Phi\|_{C^1}\leq1$ where the last inequality uses the bunching condition in \eqref{eq:bunching1} and the relationship between the constants in \eqref{eq:conss} . Substituting into the above gives
\begin{equation}
\|V(\rho_i\circ f^{-T})\|_{C^0}\leq\lambda_u^{-T}
\end{equation}
Differentiating the first displayed line in this proof and using $X(t_q)=0$ gives
\[
\begin{aligned}
VX(\rho_i\circ f^{-T})(f^Tq)&=e^T(V_T(t_q)\rho_{v,i}'(t_q)\cdot X(\rho_{h,i})(\pi_i q)+\rho_{v,i}(t_q)\cdot D\pi_iV_TX(\rho_{h,i})(\pi_i q))
\end{aligned}
\]
Using \eqref{eq:rhoc1} we have $e^T\|X(\rho_{h,i})\|_{C^0}\leq\|\Phi\|_{C^0}$, by the previous argument we have  $|V_T(t_q)\rho_{v,i}'(t_q)|\leq \Cr{con:t}\lambda_u^{-T}\iota^{-1}$ and $\|D\pi_iV_TX(\rho_{h,i})\|_{C^0}\leq\|\Phi\|_{C^1}$. Substituting this into the previous displayed line and using the relationship between the constants in \eqref{eq:conss} we have the desired estimate.
\end{proof}

\begin{lem}\label{lem:gvg}
There exists a constant $\Cl{con:gvg}>0$ such that
 if $V\in\bE^u$ is a unit norm vector field we have
\begin{equation}\label{eq:Vg}
\|g_{\rho\circ f^{-T}}\|_{C^0}, \|Vg_{\rho\circ f^{-T}}\|_{C^0} \leq\Cr{con:gvg}\sqrt{\tau}.
\end{equation}
\end{lem}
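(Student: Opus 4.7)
The plan is to apply the Parry identity $Df^{t}X=e^{-t}X$ to rewrite $g_{\rho\circ f^{-T}}$ as a single exponential integral, then to estimate that integral by exploiting the narrow support of $\rho$ and the uniform return properties of the admissible section $\mathcal{S}$.

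Unpacking the definition of $g_{\psi}$ and using $Df^{t}X=e^{-t}X$, I would write
\[
g_{\rho\circ f^{-T}}(q)=\int_{0}^{\infty}e^{-t}\,X(\rho\circ f^{-T})(f^{t}q)\,dt.
\]
The integrand is supported on $\{t\ge 0:f^{t}q\in f^{T}\mathcal{U}^{(\tau)}\}$. Because $\mathcal{S}$ is admissible, the return times of any orbit to $\mathcal{S}$ are separated by some uniform $\delta_{0}>0$, so this set is a disjoint union of intervals of length at most $2\tau$. Combining the uniform bound $\|X(\rho\circ f^{-T})\|_{C^{0}}\le C$ from the previous lemma with a geometric summation over visits, the contributions from forward visits (near times $T+\tau_{k}^{+}$) sum to $O(\tau e^{-T})$, while the contributions from backward visits (near $T-\tau_{k}^{-}$ with $\tau_{k}^{-}\le T$) sum to $O(\tau e^{-T}\cdot e^{T})=O(\tau)$, the geometric sum $\sum_{\tau_{k}^{-}\le T}e^{\tau_{k}^{-}}\lesssim e^{T}$ being controlled by the uniform spacing $\delta_{0}$. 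This yields $|g_{\rho\circ f^{-T}}(q)|\le C'\tau\le C'\sqrt{\tau}$.

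For the derivative along a unit $V\in\bE^{u}$ I differentiate under the integral,
\[
V g_{\rho\circ f^{-T}}(q)=\int_{0}^{\infty}e^{-t}\,d\bigl(X(\rho\circ f^{-T})\bigr)_{f^{t}q}\bigl(Df^{t}V\bigr)\,dt,
\]
and apply the previous lemma's bound $\|d(X(\rho\circ f^{-T}))_{p}(W)\|\le C\iota^{-1}\|W\|$ for $W\in\bE^{u}$, together with the bunching \eqref{eq:bunching1} in the form $\|Df^{t}V\|\le Ce^{(1-\zeta)t}$. The integrand is then majorised by $C\iota^{-1}e^{-\zeta t}$ on the same support set, and the same counting of visits gives $|V g_{\rho\circ f^{-T}}(q)|\le C\iota^{-1}\tau$. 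The calibration $\tau^{2}<\iota^{4}$ built into \eqref{eq:conss} forces $\iota^{-1}<\tau^{-1/2}$, so the final bound is $C\sqrt{\tau}$.

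The main technical subtlety is the bookkeeping of the geometric sum in backward time, where the $e^{-\zeta t}$ decay in the integrand is apparently cancelled by the factor $e^{\zeta\tau_{k}^{-}}$ at visits near the current time $t\approx T$; this is controlled exactly because the admissible section imposes a uniform lower bound on return times. The choice $\tau<\iota^{2}$ from \eqref{eq:conss} is precisely what converts the derivative's loss of the factor $\iota^{-1}$ into the exponent $\sqrt{\tau}$ in the final estimate.
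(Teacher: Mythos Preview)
Your proof is correct and follows essentially the same route as the paper: rewrite $g_{\rho\circ f^{-T}}$ via $Df^{t}X=e^{-t}X$, use that the integrand is supported on visits of length $2\tau$ separated by a fixed $\delta_{0}>0$, invoke the $C^{0}$ bound on $X(\rho\circ f^{-T})$ from the preceding lemma and sum geometrically; for the derivative, differentiate under the integral, use the mixed bound \eqref{eq:VX} together with bunching to get an $e^{-\zeta t}$ weight, and then convert the resulting $\tau\iota^{-1}$ into $\sqrt{\tau}$ via the calibration $\tau<\iota^{2}$ from \eqref{eq:conss}.

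The one unnecessary complication in your write-up is the forward/backward split of visits relative to time $T$. The paper simply observes that the visits of $f^{t}q$ (for $t\ge 0$) to the support form a sequence $0\le t_{1}<t_{2}<\cdots$ with $t_{i+1}-t_{i}\ge \sigma/2$, and bounds $\sum_{i}e^{-t_{i}}$ (resp.\ $\sum_{i}e^{-\zeta t_{i}}$) directly by $\frac{1}{1-e^{-\sigma/2}}$ (resp.\ $\frac{1}{1-e^{-\zeta\sigma/2}}$). Your shift by $T$ and the ensuing control of $\sum_{\tau_{k}^{-}\le T}e^{\tau_{k}^{-}}\lesssim e^{T}$ is correct but circuitous: the uniform separation of return times already gives the geometric bound without ever introducing the shifted variable.
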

\begin{proof}
We recall all the quantities we are going to estimate need information along the full forward orbit. To do the estimate, for $q\in M$, we consider the splitting of the forward orbit of $q$ by 
\[
0\leq t_1<t_1+2\tau<t_2<t_2+2\tau<\ldots<t_n<t_n+2\tau<\ldots
\]
such that 
\[
f^{t_i}\in\bigcup_j\widetilde{\mathcal U}^{(\tau)}_j \quad\text{ and }\quad f^{t}\notin \bigcup_j\widetilde{\mathcal U}^{(\tau)}_j \quad\text{ for }\quad t\in(t_i+\tau, t_{i+1})
\]
The times $t_i'$s correspond to the return times to some $\mathcal U^{(\tau)}$ and any piece of the the orbit of $q$ for $t\in(t_i+2\tau, t_{i+1})$. We observe that by \eqref{eq:conss}, we have 
\[
|t_{i+1}-t_i-2\tau|>\sigma/2.
\]
Since $\rho\circ f^{-T}$ is supported in $\bigcup_j\widetilde{\mathcal U}^{(\tau)}_j $ then we have
\[
g_{\overline\rho}(q)=\sum_{i}\int_{t_i}^{t_i+\tau}e^{-t}X(\overline\rho\circ f^{-T})\circ f^t(q)dt=\sum_{i}e^{-t_i}\int_{0}^{\tau}e^{-t}X(\overline\rho\circ f^{-T})\circ f^{t_i+t}(q)dt.
\]
And similarly if $V\in\bE^u_q$, we have

\[
V(g_{\overline\rho})(q)=\sum_{i}\int_{t_i}^{t_i+\tau}e^{-t}V_tX(\overline\rho\circ f^{-T})\circ f^t(q)dt=\sum_{i}e^{-t_i}\int_{0}^{\tau}e^{-t}V_{t_i+t}X(\overline\rho\circ f^{-T})\circ f^{t_i+t}(q)dt.
\]
where $V_t=Df^tV$.
By definition of $\rho$ we have
\[
X(\rho\circ f^{-T})=\sum_j\rho_{v,i}\circ f^{-T}X(\tilde\rho_{h,i}\circ f^{-T})
\]
Using \eqref{eq:XZ} and \eqref{eq:conss} we have
\[
\|g_{\overline\rho}(q)\|\leq 2\tau(\Cr{con:rhoh1}\sigma^{-1}\|\Uptheta(\psi)\|_{C^0}+\varepsilon)\sum_ie^{-t_i}\leq2\tau\frac{\Cr{con:rhoh1}\sigma^{-1}\|\Uptheta(\psi)\|_{C^0}+\varepsilon}{1-e^{-\sigma}}\leq\Cr{con:gvg}\sqrt{\tau}.
\]
That gives the estimate of $\|g_{\rho\circ f^{-T}}\|_{C^0}$. For the $C^1$ norm, we take any unit norm vector field $V\in T_qM$ and use the same splitting as above to have:

\[
V(g_{\overline\rho})(q)=\sum_{i}\int_{t_i}^{t_i+\tau}e^{-t}V_tX(\overline\rho\circ f^{-T})\circ f^t(q)dt=\sum_{i}e^{-t_i}\int_{0}^{\tau}e^{-t}V_{t_i+t}X(\overline\rho\circ f^{-T})\circ f^{t_i+t}(q)dt.
\]
where $V_t=Df^tV$. Then, similar to the previous estimate, using the bunching condition \eqref{eq:bunching1} we have
 \[
\|V(g_{\overline\rho})(q)\|\leq 2\tau\iota^{-2}\sum_ie^{-\zeta t_i}\leq\frac{2\tau\iota^{-2}}{1-e^{-\zeta\sigma}}\leq\Cr{con:gvg}\sqrt{\tau}.
\]
\end{proof}

\begin{cor}There exists a constant $\Cl{con:upth}>0$ such that 
if $V\in\bE^u$ is a unit vector field then we have
\begin{equation}\label{eq:Up1}
\left\|\overline\Uptheta(\widetilde\psi)(V)-\overline\Uptheta(\psi)(V)\right\|\leq\Cr{con:upth}\sqrt{\tau}
\end{equation}
\end{cor}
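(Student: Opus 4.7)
The plan is to expand $\overline\Uptheta(\widetilde\psi)(V) - \overline\Uptheta(\psi)(V)$ term by term using the defining formula \eqref{eq:Upthetabar} and to apply the estimates proved earlier in this section. The crucial structural observation is that $\psi \mapsto g_\psi$ is linear, so
\[
g_{\widetilde\psi} - g_\psi \;=\; g_{\widetilde\psi - \psi} \;=\; g_{\rho\circ f^{-T}},
\]
and Lemma \ref{lem:gvg} already bounds both $\|g_{\rho\circ f^{-T}}\|_{C^0}$ and $\|V(g_{\rho\circ f^{-T}})\|_{C^0}$ by $\Cr{con:gvg}\sqrt\tau$. This linearity is what forces the final bound to be of order $\sqrt\tau$ rather than something cruder.

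I would split the difference into four pieces corresponding to the four summands of $\overline\Uptheta$. The $dg$-piece contributes exactly $V(g_{\rho\circ f^{-T}})$, bounded directly by \eqref{eq:Vg}. The piece involving the $\psi$-independent $1$-form $i_Z\circ d\alpha - i_X\circ d\omega$ collapses to $g_{\rho\circ f^{-T}}$ times a quantity uniformly bounded in $C^0$ (since $\alpha$ and $\omega$ are $\cC^1$), which is again $O(\sqrt\tau)$. The terminal piece $(\widetilde\psi - \psi)\, i_X\circ d\alpha(V) = (\rho\circ f^{-T})\, i_X\circ d\alpha(V)$ is in fact much smaller: from the explicit formula \eqref{eq:rhoc0} together with $\|\Phi\|_{C^0} \leq \|\Uptheta(\psi)(V)\|_{C^0}+\varepsilon$, one reads off $\|\rho\circ f^{-T}\|_{C^0} = O(e^{-T})$, which by \eqref{eq:conss} is $O(\tau^{2})$.

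The only nontrivial contribution is the $g_\psi$-times-$\psi$-derivative term, which I would handle with the standard $ab-a'b' = (a-a')b + a'(b-b')$ splitting:
\[
g_{\widetilde\psi}A_V(\widetilde\psi) - g_\psi A_V(\psi) \;=\; g_{\rho\circ f^{-T}}\,A_V(\widetilde\psi) \;+\; g_\psi\bigl(A_V(\widetilde\psi) - A_V(\psi)\bigr),
\]
where $A_V(\psi) := -V(\psi)/\psi + Z(\psi)\alpha(V)/\psi$. The first summand is $O(\sqrt\tau)$ by Lemma \ref{lem:gvg} combined with a uniform bound on $A_V(\widetilde\psi)$ coming from $\widetilde\psi \in \mathfrak D$. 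The second summand reduces by direct algebra to controlling $V(\widetilde\psi) - V(\psi) = V(\rho\circ f^{-T})$ and $Z(\widetilde\psi) - Z(\psi) = Z(\rho\circ f^{-T})$, which by \eqref{eq:XZ}, \eqref{eq:V} combined with the scalings $e^{-T}\leq \tau^{2}$ and $\lambda_u^{-T}\leq \tau^{2}$ from \eqref{eq:conss} are both $O(\tau)$; multiplied by the uniformly bounded $g_\psi$ this remains well below $\sqrt\tau$.

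The main obstacle is purely bookkeeping; no single term is genuinely delicate because Lemma \ref{lem:gvg} already provides the dominant $O(\sqrt\tau)$ and every other contribution is smaller. The one point worth verifying is that the denominators $\widetilde\psi$ stay uniformly bounded away from $0$ so that $A_V(\widetilde\psi)$ is controlled, but this is automatic for $T\geq T_\varepsilon$ since $\|\rho\circ f^{-T}\|_{C^0}\ll 1$ and $\psi$ takes values in $(1,2)$ by definition of $\mathfrak D$. Collecting the four estimates and choosing $\Cr{con:upth}$ larger than the sum of the constants produced yields the claim.
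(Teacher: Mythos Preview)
Your proposal is correct and follows essentially the same approach as the paper: expand $\overline\Uptheta(\widetilde\psi)-\overline\Uptheta(\psi)$ termwise via \eqref{eq:Upthetabar}, use linearity of $\psi\mapsto g_\psi$ together with Lemma~\ref{lem:gvg} for the dominant $O(\sqrt\tau)$ contributions, and dispatch the remaining pieces with \eqref{eq:XZ}, \eqref{eq:V}, \eqref{eq:rhoc0}, and the scalings \eqref{eq:conss}. The only cosmetic difference is in the $ab-a'b'$ splitting of the $g_\psi$-times-$\psi$-derivative term: you pair $g_{\rho\circ f^{-T}}$ with $A_V(\widetilde\psi)$ while the paper pairs it with $A_V(\psi)$, but either choice works for the same reason, and your explicit check that $\widetilde\psi$ stays bounded away from zero is a point the paper leaves implicit.
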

\begin{proof}
We recall that 
\begin{equation}\label{eq:Upthetabar}
\overline\Uptheta(\psi):=dg_\psi+g_\psi\cdot(-\frac{d\psi}{\psi}+\frac{Z(\psi)}{\psi}\alpha+i_Z\circ d\alpha-i_X\circ d\omega)+\psi i_X\circ d\alpha.
\end{equation}

Then we have
\begin{equation}\label{eq:bar1}
\begin{aligned}
\overline\Uptheta(\widetilde\psi)-\overline\Uptheta(\psi)&=(dg_{\widetilde\psi}-dg_\psi)+g_{\psi}(-\frac{d\widetilde\psi}{\widetilde\psi}+\frac{d\psi}{\psi}+\frac{Z(\widetilde\psi)}{\widetilde\psi}\alpha-\frac{Z(\psi)}{\psi}\alpha)\\
&+(g_{\widetilde\psi}-g_{\psi})(-\frac{d\psi}{\psi}+\frac{Z(\psi)}{\psi}+(i_Z\circ d\alpha-i_X\circ d\omega))
+(\widetilde\psi-\psi) i_X\circ d\alpha.\\
\end{aligned}
\end{equation}
We are going to estimate each term of the above equality.
We first observe that by \eqref{eq:Vg}, the first term is estimated by
\begin{equation}\label{eq:bar2}
\left\|(dg_{\widetilde\psi}-dg_\psi)(V) \right\|=\left\|dg_{\rho}(V) \right\|\leq\Cr{con:gvg}\sqrt{\tau}.
\end{equation}
Similarly, using that $\psi>1$ and \eqref{eq:Vg}  we have 
\begin{equation}\label{eq:bar3}
\left\|(g_{\widetilde\psi}-g_{\psi})(-\frac{d\psi}{\psi}+\frac{Z(\psi)}{\psi}+(i_Z\circ d\alpha-i_X\circ d\omega))\right\|\leq\Cr{con:gvg}\sqrt{\tau}(2\|\psi\|_{\mathfrak D}+\|\alpha\|_{C^1}+\|\omega\|_{C^1})
\end{equation}
Using \eqref{eq:rhoc0} we have
\begin{equation}\label{eq:bar4}
\|(\widetilde\psi-\psi) i_X\circ d\alpha\|\leq e^{-T}\|\Phi\|\|\alpha\|_{C^1}
\end{equation}
for the remaining term, we write
\[
\frac{d\widetilde\psi}{\widetilde\psi}-\frac{d\psi}{\psi}=\frac{\psi d\widetilde\psi-\widetilde\psi d\psi}{\psi\widetilde\psi}=\frac{\psi d(\widetilde\psi-\psi)+(\psi-\widetilde\psi)d\psi}{\psi\widetilde\psi}
\]
and 
\[
\frac{Z(\widetilde\psi)}{\widetilde\psi}-\frac{Z(\psi)}{\psi}=\frac{\psi Z(\widetilde\psi)-\widetilde\psi Z(\psi)}{\psi\widetilde\psi}=\frac{\psi Z(\widetilde\psi-\psi)+(\psi-\widetilde\psi)Z(\psi)}{\psi\widetilde\psi}
\]
Using \eqref{eq:XZ}, \eqref{eq:V} and the relationship between the constants in \eqref{eq:conss} we have
\begin{equation}\label{eq:bar5}
\left\|\frac{d\widetilde\psi}{\widetilde\psi}-\frac{d\psi}{\psi}\right\|\leq2\|V(\rho)\|_{C^1}+\|\rho\circ f^{-T}\|_{C^0}\|\psi\|_{\mathfrak D}\leq \tau
\end{equation}
and 
\begin{equation}\label{eq:bar6}
\left\|\frac{Z(\widetilde\psi)}{\widetilde\psi}-\frac{Z(\psi)}{\psi} \right\|\leq2\|Z(\rho)\|_{C^1}+\|\rho\circ f^{-T}\|_{C^0}\|\psi\|_{\mathfrak D}\leq \tau
\end{equation}
Substituting \eqref{eq:bar2}, \eqref{eq:bar3} \eqref{eq:bar4} \eqref{eq:bar5} and \eqref{eq:bar6} into \eqref{eq:bar1} gives the desired estimate.
\end{proof}

\begin{proof}[Proof of Proposition \ref{prop:loc}]
The estimate of the $\|\psi-\widetilde\psi\|_{\mathfrak D}$ follows directly from \eqref{eq:XZ}, \eqref{eq:V}, \eqref{eq:Vg} and the relationship between the constants \eqref{eq:conss}.

To estimate $\|\Uptheta(\widetilde\psi)\|$, we will estimate $|\Uptheta(\widetilde\psi)(v_i)|$ where $\{v_1,\cdots, v_m\}$ is the basis given by Corollary \ref{cor:basis}. 
 Using that $X(\rho)(f^Tq)=\Phi$ for $q\in \mathcal S^{(\tau/2)}$ and the fact that $\alpha(v_i)=1$ we have
\[
\begin{aligned}
\Uptheta(\widetilde\psi)(v_i)&= X(\psi)+\Phi+\overline\Uptheta(\widetilde\psi)(v_i)\\
&=\Uptheta(\psi)(V)+\Phi+\overline\Uptheta(\widetilde\psi)(v_i)-\overline\Uptheta(\psi)(V)\\
&=\Uptheta(\psi)(V)+\Phi+\overline\Uptheta(\widetilde\psi)(v_i)-\overline\Uptheta(\widetilde\psi)(v_i)+\overline\Uptheta(\psi)(v_i-V)
\end{aligned}
\]

Then using \eqref{eq:mol1}, \eqref{eq:Up1}  and Corollary \ref{cor:basis} we have
\begin{equation}
\left|\Uptheta(\widetilde\psi)(v_i)\right|\leq 2\Cr{con:upth}\varepsilon
\end{equation}
Using Corollary \ref{cor:basis} once again  we have
\[
\|\Uptheta(\widetilde\psi)\|\leq 2\Cr{con:upth}\delta^{-1}\varepsilon^2\max_i|\alpha(v_i)|\leq 2\Cr{con:upth}\delta^{-2}\varepsilon
\]
which gives the estimate of the proposition.
\end{proof}

\end{document}